\newtheorem{theorem}{Theorem}
\newtheorem{condition}[theorem]{Condition}
\newtheorem{lemma}[theorem]{Lemma}
\newtheorem{remark}[theorem]{Remark}
\newenvironment{proof}[1][Proof]{\textbf{#1.} }{\ \rule{0.5em}{0.5em}}
\begin{document}

\author{ Pelin G. Geredeli \thanks{%
email address: peling@iastate.edu.} \\
Department of Mathematics\\
Iowa State University, Ames-IA, USA}
\title{Bounded Semigroup Wellposedness for a Linearized Compressible Flow
Structure PDE Interaction with Material Derivative}
\maketitle

\begin{abstract}
We consider a compressible flow structure interaction (FSI) PDE system which
is linearized about some reference rest state. The deformable interface is
under the effect of an ambient field generated by the underlying and
unbounded material derivative term which further contributes to the
non-dissipativity of the FSI system, with respect to the standard energy
inner product. In this work we show that, on an appropriate subspace, only
one dimension less than the entire finite energy space, the FSI system is
wellposed, and is moreover associated with a continuous semigroup which is 
\emph{uniformly bounded} in time. Our approach involves establishing maximal
dissipativity with respect to a special inner product which is equivalent to
the standard inner product for the given finite energy space. Among other
technical features, the necesssary PDE estimates require the invocation of a
multiplier which is intrinsic to the given compressible FSI system.

\vskip.3cm \noindent \textbf{Key terms:} Flow-structure interaction,
compressible flows, wellposedness, uniformly bounded semigroup, material
derivative
\end{abstract}


\section{ Introduction}

Compressible flow phenomena arise in fluid mechanics, particularly in the
modeling of gas dynamics. The motion of such flows is typically described
via the Navier Stokes equations by way of providing qualitative information
on the three basic physical variables: the pressure of the fluid $p=p(x,t)$,
the mass density $\rho =\rho (x,t)$, the fluid velocity field $u=u(x,t)$.
Unlike the case of incompressible flows wherein density $\rho $ is a
constant, the pressure associated with compressible flow has a non-local
character and is an unknown function determined (implicitly) by the fluid
motion. Moreover, in compressible flow dynamics the density of the fluid is
considered to be an additional variable component, the resolution of which
represents substantial difficulties in the associated mathematical analysis.

In this work, we consider the linearization of a coupled
flow-structure-interaction (FSI) PDE system, with compressible fluid flow
PDE component. In the context of real world applications, this FSI finds its
key application in aeroelasticity: this PDE system involves the strong
coupling between a dynamically deforming structure (e.g. the wing) and the
air flow which streams past it. In short, this system describes the
interaction between plate and flow dynamics through a deformable interface.

The description of our FSI PDE model is given as follows: Let the flow
domain $\mathcal{O} \subset \mathbb{R}^{3}$ with boundary $\partial \mathcal{%
O}$. We assume that $\partial \mathcal{O}=\overline{S}\cup \overline{\Omega }
$, with $S\cap \Omega =\emptyset $, and with (structure) domain $\Omega
\subset \mathbb{R}^{3}$ being a \emph{flat} portion of $\partial \mathcal{O}$%
. In particular, $\partial \mathcal{O}$ has the following specific
configuration: 
\begin{equation}
\Omega \subset \left\{ x=(x_{1,}x_{2},0)\right\} \,\text{\ and \ surface }%
S\subset \left\{ x=(x_{1,}x_{2},x_{3}):x_{3}\leq 0\right\} \,.  \label{geo}
\end{equation}%

Let $\ \mathbf{n}(\mathbf{x})$ be the unit outward normal vector to $%
\partial \mathcal{O}$, and $\mathbf{n|}_{\Omega }=[0,0,1]$. Also, we denote
the unit outward normal vector to $\partial \Omega$ by $\mathbf{\nu}\mathbf{%
(x)}$. Additional geometric assumptions on $\mathcal{O}$ will be specified
later. Also, we assume that the pressure is a linear function of the
density; $p(x,t) = C\rho(x,t)$ as mostly done in the compressible fluid
literature and it is chosen as a primary variable to solve.

With respect to some equilibrium point of the form $\left\{ p_{\ast },%
\mathbf{U},\varrho _{\ast }\right\} $ where the pressure and density
components ${p_{\ast },\varrho _{\ast }}$ are assumed to be scalars, and the
arbitrary ambient field $\mathbf{U}:\mathcal{O}\rightarrow \mathbb{R}^{3}$ 
\begin{equation*}
\mathbf{U}%
(x_{1},x_{2},x_{3})=[U_{1}(x_{1},x_{2},x_{3}),U_{2}(x_{1},x_{2},x_{3}),U_{3}(x_{1},x_{2},x_{3})]
\end{equation*}%
is given, this linearization produces the following system of equations, in
solution variables $u(x_{1},x_{2},x_{3},t)$ (flow velocity), $%
p(x_{1},x_{2},x_{3},t)$ (pressure), $w_{1}(x_{1},x_{2},t)$ (elastic plate
displacement) and $w_{2}(x_{1},x_{2},t)$ (elastic plate velocity): 
\begin{align}
& \left\{ 
\begin{array}{l}
p_{t}+\mathbf{U}\cdot \nabla p+\text{div}~u\mathbf{+}\text{div}(\mathbf{U)}%
p=0~\text{ in }~\mathcal{O}\times (0,\infty ) \\ 
u_{t}+\mathbf{U}\cdot \nabla u-\text{div}\sigma (u)+\eta u+\nabla p=0~\text{
in }~\mathcal{O}\times (0,\infty ) \\ 
(\sigma (u)\mathbf{n}-p\mathbf{n})\cdot \boldsymbol{\tau }=0~\text{ on }%
~\partial \mathcal{O}\times (0,\infty ) \\ 
u\cdot \mathbf{n}=0~\text{ on }~S\times (0,\infty ) \\ 
u\cdot \mathbf{n}=w_{2}+\mathbf{U}\cdot \nabla w_{1}\text{ \ \ on }~\Omega
\times (0,\infty )\text{ }%
\end{array}%
\right.   \label{1} \\
& \left\{ 
\begin{array}{l}
w_{1_{t}}-w_{2}-\mathbf{U}\cdot \nabla w_{1}=0\text{ \ \ on }~\Omega \times
(0,\infty ) \\ 
w_{2_{t}}+\Delta ^{2}w_{1}+\left[ 2\nu \partial _{x_{3}}(u)_{3}+\lambda 
\text{div}(u)-p\right] _{\Omega }=0~\text{ on }~\Omega \times (0,\infty ) \\ 
w_{1}=\frac{\partial w_{1}}{\partial \nu }=0~\text{ on }~\partial \Omega
\times (0,\infty )%
\end{array}%
\right.   \label{2} \\
& 
\begin{array}{c}
\left[ p(0),u(0),w_{1}(0),w_{2}(0)\right] =\left[ p_{0},u_{0},w_{a},w_{b}%
\right] \in H_{N}^{\bot }.%
\end{array}
\label{3}
\end{align}%
where the space $H_{N}^{\bot }$ is defined in (\ref{null-ort}). The quantity 
$\eta >0$ represents a drag force of the domain on the viscous flow. In
addition, the quantity $\mathbf{\tau }$ in (\ref{1}) is in the space $%
TH^{1/2}(\partial \mathcal{O)}$ of tangential vector fields of Sobolev index
1/2; that is,%
\begin{equation}
\mathbf{\tau }\in TH^{1/2}(\partial \mathcal{O)=}\{\mathbf{v}\in \mathbf{H}^{%
\frac{1}{2}}(\partial \mathcal{O}):\mathbf{v}_{\partial \mathcal{O}}\cdot 
\mathbf{n}=0~\text{ on }~\partial \mathcal{O}\}.
\end{equation}%
(See e.g., p.846 of \cite{buffa2}.) In addition, we take ambient field $%
\mathbf{U}\in \mathbf{V}_{0}\cap W$ where 
\begin{equation}
\mathbf{V}_{0}=\{\mathbf{v}\in \mathbf{H}^{1}(\mathcal{O})~:~\left. \mathbf{v%
}\right\vert _{\partial \mathcal{O}}\cdot \mathbf{n}=0~\text{ on }~\partial 
\mathcal{O}\}  \label{V_0}
\end{equation}%
and 
\begin{equation}
W=\{v\in \mathbf{H}^{1}(\mathcal{O}):v\in L^{\infty }(\mathcal{O}),\text{ \
\ }div(v)\in L^{\infty }(\mathcal{O}),\text{ \ \ and \ \ }\mathbf{U}%
|_{\Omega }\in C^{2}(\overline{\Omega })\}  \label{W}
\end{equation}%
(This vanishing of the boundary for ambient fields is a standard assumption
in compressible flow literature; see \cite{dV},\cite{valli},\cite{decay},%
\cite{spectral}.) Moreover, the \textit{stress and strain tensors} in the
flow PDE component of (\ref{1})-(\ref{3}) are defined respectively as 
\begin{equation*}
\sigma (\mathbf{\mu })=2\nu \epsilon (\mathbf{\mu })+\lambda \lbrack
I_{3}\cdot \epsilon (\mathbf{\mu })]I_{3};\text{ \ }\epsilon _{ij}(\mathbf{%
\mu })=\dfrac{1}{2}\left( \frac{\partial \mathbf{\mu }_{j}}{\partial x_{i}}+%
\frac{\partial \mathbf{\mu }_{i}}{\partial x_{j}}\right) \text{, \ }1\leq
i,j\leq 3,
\end{equation*}%
where \textit{Lam\'{e} Coefficients }$\lambda \geq 0$ and $\nu >0$.

\begin{remark}
As will be seen below, the appearance of the term $-w_{2}-\mathbf{U}\cdot
\nabla w_{1}$, in the mechanical displacement equation (\ref{2}), will
induce an invariance with respect to the space $H_{N}^{\bot }$ defined in (%
\ref{null-ort}). We will ultimately establish that solutions of (\ref{1})-(%
\ref{3}), with initial data in $H_{N}^{\bot }$ , are associated with a
bounded semigroup, for $\mathbf{U}$ sufficiently small with respect to an appropriate measurement (see \ref{normU})). In addition, if we
set $w(t)=w_{1}(t)$, $w_{t}=w_{2}+\mathbf{U}\cdot \nabla w_{1}$, then we
have that $[p,u,w,w_{t}]$ solves 
\begin{align}
& \left\{ 
\begin{array}{l}
p_{t}+\mathbf{U}\cdot \nabla p+\text{div}~u\mathbf{+}\text{div}(\mathbf{U)}%
p=0~\text{ in }~\mathcal{O}\times (0,\infty ) \\ 
u_{t}+\mathbf{U}\cdot \nabla u-\text{div}\sigma (u)+\eta u+\nabla p=0~\text{
in }~\mathcal{O}\times (0,\infty ) \\ 
(\sigma (u)\mathbf{n}-p\mathbf{n})\cdot \boldsymbol{\tau }=0~\text{ on }%
~\partial \mathcal{O}\times (0,\infty ) \\ 
u\cdot \mathbf{n}=0~\text{ on }~S\times (0,\infty ) \\ 
u\cdot \mathbf{n}=w_{t}\text{ \ \ on }~\Omega \times (0,\infty )\text{ }%
\end{array}%
\right.  \notag \\
& \left\{ 
\begin{array}{l}
w_{tt}+\Delta ^{2}w-\mathbf{U}\cdot \nabla w_{t}+\left[ 2\nu \partial
_{x_{3}}(u)_{3}+\lambda \text{div}(u)-p\right] _{\Omega }=0~\text{ on }%
~\Omega \times (0,\infty ) \\ 
w=\frac{\partial w}{\partial \nu }=0~\text{ on }~\partial \Omega \times
(0,\infty )%
\end{array}%
\right.  \notag \\
& 
\begin{array}{c}
\left[ p(0),u(0),w(0),w_{t}(0)\right] =\left[ p_{0},u_{0},w_{a},w_{b}+%
\mathbf{U}\cdot \nabla w_{a}\right] \in H_{N}^{\bot }.%
\end{array}
\notag
\end{align}%
where $w(0)=w_{1}(0)=w_{a}$ and $w_{t}(0)=w_{2}(0)+\mathbf{U}\cdot \nabla
w_{1}(0)=w_{b}+\mathbf{U}\cdot \nabla w_{a}.$
\end{remark}

Here, as usually done for viscous fluids, we impose the so called \emph{%
impermeability condition} on $\Omega $; namely, we assume that no fluid
passes through the elastic portion of the boundary during deflection \cite%
{bolotin,dowell1}. At this point, we emphasize that the FSI problem under
consideration has present a \emph{material derivative} term on the deflected
interaction surface. This material derivative computes the time rate of
change of any quantity such as temperature or velocity (and hence also
acceleration) for a portion of a material in motion. Since our material is a
fluid, then the movement is simply the flow field and any particle of fluid
speeds up and down as it flows along the specified spatial domain. With
respect to the change of the speed of the said fluid, the material
derivative effectively gives a true rate of change of the velocity. Hence,
we describe the interface $\Omega $ in Lagrangian coordinates in $\mathbb{R}%
^{3}$ with $S(a_{1},a_{2},a_{3})=0$; also let $\mathbf{x}=\langle
x_{1},x_{2},x_{3}\rangle $ be the Eulerian position inside $\mathcal{O}$.
Then, letting $w(x_{1},x_{2},t)$ represent the transverse ($x_{3}$)
displacement of the plate on $\Omega $, we have that 
\begin{equation*}
S\big(x_{1},x_{2},x_{3}-w(x_{1},x_{2};t)\big)\equiv \mathcal{S}%
(x_{1},x_{2},x_{3};t)=0,
\end{equation*}%
describes the time-evolution of the boundary. The impermeability condition
requires that the material derivative ($\partial _{t}+\tilde{u}\cdot \nabla
_{\mathbf{x}}$) vanishes on the deflected surface \cite%
{bolotin,chorin-marsden,dowell1}: 
\begin{equation*}
\big(\partial _{t}\mathcal{+}\tilde{u}\cdot \nabla _{\mathbf{x}}\big)%
\mathcal{S}=0,~~~~~\tilde{u}=u+\mathbf{U}
\end{equation*}%
Applying the chain rule and rearranging, we obtain 
\begin{equation}
\nabla _{\mathbf{x}}S\cdot \langle 0,0,-w_{t}\rangle +\mathbf{U}\cdot
\lbrack \nabla _{\mathbf{x}}S+\langle
-S_{x_{3}}w_{x_{1}},-S_{x_{3}}w_{x_{2}},0\rangle ]=-u\cdot \lbrack \nabla _{%
\mathbf{x}}S+\langle -S_{x_{3}}w_{x_{1}},-S_{x_{3}}w_{x_{2}},0\rangle ].
\label{condish}
\end{equation}%
We identify $\nabla _{\mathbf{x}}S$ as the normal to the deflected surface; 
\emph{assuming small deflections} and restricting to $(x_{1},x_{2})\in
\Omega $, we can identify $\nabla _{\mathbf{x}}S\big|_{\Omega }$ with $%
\mathbf{n}\big|_{\Omega }=\langle 0,0,1\rangle $. Making use of %
\eqref{condish}, imposing that $\mathbf{U}\cdot \mathbf{n}=0$ on $\partial 
\mathcal{O}$ (see \eqref{V_0} and discussion), and discarding quadratic
terms, this relation allows us to write for $(x_{1},x_{2})\in \Omega $: 
\begin{equation*}
\mathbf{n}\cdot \langle 0,0,w_{t}\rangle +\mathbf{U}\cdot \langle
w_{x_{1}},w_{x_{2}},0\rangle =u\cdot \mathbf{n}.
\end{equation*}%
This yields the desired flow boundary condition 
\begin{equation}
u\cdot \mathbf{n}\big|_{\Omega }=w_{t}+\mathbf{U}\cdot \nabla w
\end{equation}%
in (\ref{1})$_{5}$ via the material derivative of the deflected elastic
interaction surface.

We note that the flow linearization is taken with respect to a general
inhomogeneous compressible Navier-Stokes system. However, unlike the papers 
\cite{agw, material} where some forcing and energy level terms in the
pressure and flow equations have been neglected, due to their relative
unimportance therein, in this present study, the particular energy level
term $\text{div}(\mathbf{U)}p$ in $(\ref{1})_{1}$ can not be neglected,
inasmuch as it plays a part in establishing that the associated FSI
semigroup is uniformly bounded (and invariant) with respect to the subspace $%
H_{N}^{\bot }$. Accordingly, the term $\text{div}(\mathbf{U)}p$ is one of
the ingredients in the \textquotedblleft feedback\textquotedblright\
operator $B$ defined in (\ref{feedbackB}).

In addition to the properties given for the fluid domain $\mathcal{O}$
before, we impose additional conditions which will be necessary for the
application of some elliptic regularity results for solutions of second
order boundary value problems on corner domains \cite{dauge, Dauge_3}:

\begin{condition}
\label{cond} Flow domain $\mathcal{O}$ should be curvilinear polyhedral
domain which satisfies the following condition:

\begin{itemize}
\item Each corner of the boundary $\partial \mathcal{O}$ -if any- is
diffeomorphic to a convex cone, \label{g1}

\item Each point on an edge of the boundary $\partial \mathcal{O}$ is
diffeomorphic to a wedge with opening $<\pi.$ \label{g2}
\end{itemize}
\end{condition}

\noindent Some examples of geometries can be seen in Figure 1.
\begin{figure}[]
\begin{subfigure}[H]{0.6\linewidth}
\centering
\begin{tikzpicture}[scale=0.9]
\draw[left color=black!10,right color=black!20,middle
color=black!50, ultra thick] (-2,0,0) to [out=0, in=180] (2,0,0)
to [out=270, in = 0] (0,-3,0) to [out=180, in =270] (-2,0,0);

\draw [fill=black!60, ultra thick] (-2,0,0) to [out=80,
in=205](-1.214,.607,0) to [out=25, in=180](0.2,.8,0) to [out=0,
in=155] (1.614,.507,0) to [out=335, in=100](2,0,0) to [out=270,
in=25] (1.214,-.507,0) to [out=205, in=0](-0.2,-.8,0) [out=180,
in=335] to (-1.614,-.607,0) to [out=155, in=260] (-2,0,0);

\draw [dashed, thin] (-1.7,-1.7,0) to [out=80, in=225](-.6,-1.3,0)
to [out=25, in=180](0.35,-1.1,0) to [out=0, in=155] (1.3,-1.4,0)
to [out=335, in=100](1.65,-1.7,0) to [out=270, in=25] (0.9,-2.0,0)
to [out=205, in=0](-0.2,-2.2,0) [out=180, in=335] to (-1.514,-2.0)
to [out=155, in=290] (-1.65,-1.7,0);

\node at (0.2,0.1,0) {{\LARGE$\Omega$}};

\node at (1.95,-1.5,0) {{\LARGE $S$}};

\node at (-0.3,-1.6,0) {{\LARGE $\mathcal{O}$}};
\end{tikzpicture}

 \end{subfigure}
\hfill 
\begin{subfigure}[H]{0.4\linewidth}
\centering
\includegraphics[width=\linewidth]{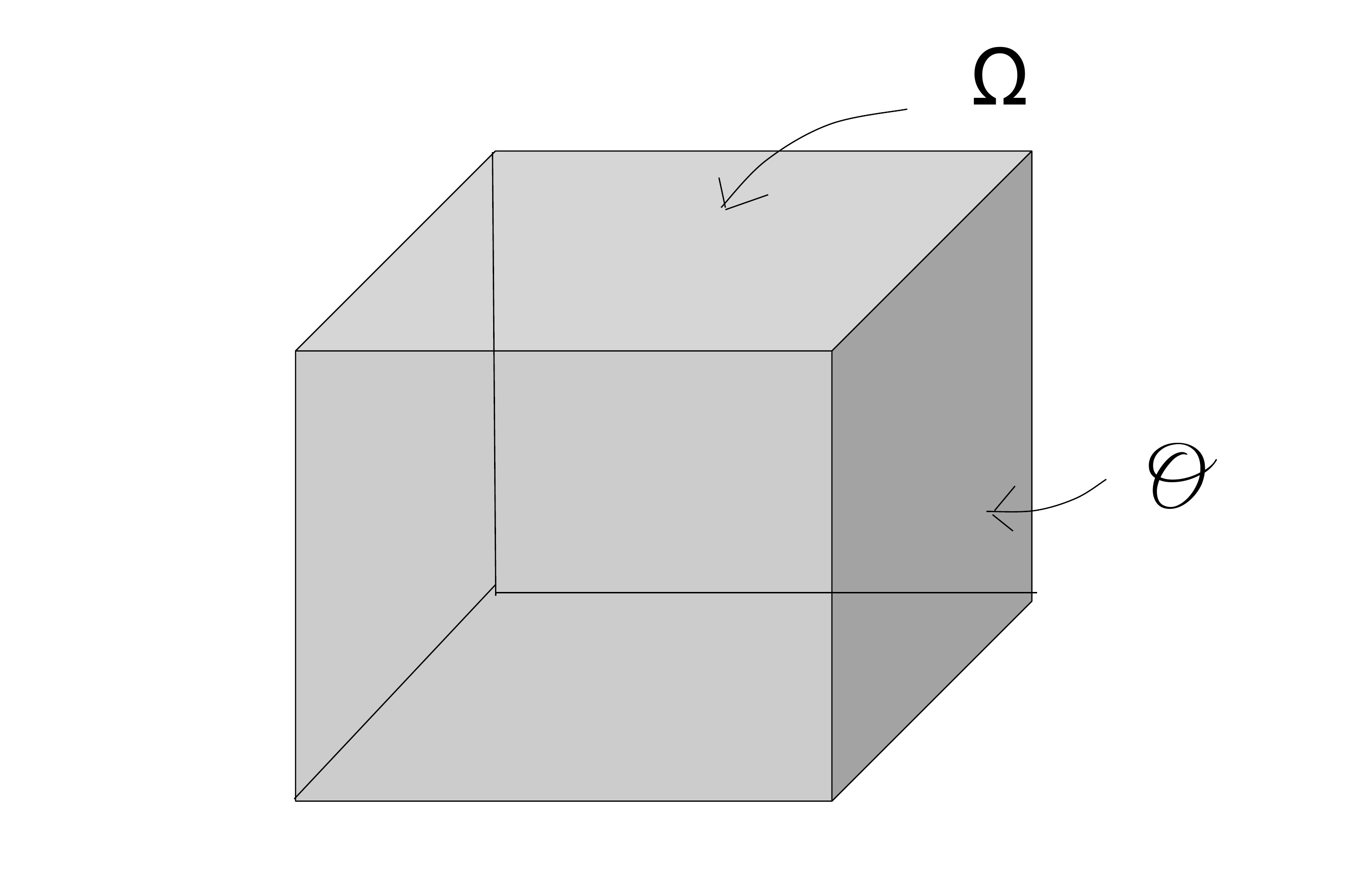}
\end{subfigure}
\caption{Polyhedral Flow-Structure Geometries }
\end{figure}
\noindent In reference to problem (\ref{1})-(\ref{3}), the associated finite
energy space will be 
\begin{equation}
\mathcal{H}\equiv L^{2}(\mathcal{O})\times \mathbf{L}^{2}(\mathcal{O})\times
H_{0}^{2}(\Omega )\times L^{2}(\Omega )  \label{stand}
\end{equation}%
which is a Hilbert space, topologized by the following standard inner
product: 
\begin{equation}
(\mathbf{y}_{1},\mathbf{y}_{2})_{\mathcal{H}}=(p_{1},p_{2})_{L^{2}(\mathcal{O%
})}+(u_{1},u_{2})_{\mathbf{L}^{2}(\mathcal{O})}+(\Delta w_{1},\Delta
w_{2})_{L^{2}(\Omega )}+(v_{1},v_{2})_{L^{2}(\Omega )}
\end{equation}%
for any $\mathbf{y}_{i}=(p_{i},u_{i},w_{i},v_{i})\in \mathcal{H},~i=1,2.$

\subsection{Literature}

The PDE's which describe fluid structure interactions have been considered
from a variety of viewpoints and with different objectives in mind; \cite%
{clark, dvorak, george1, george2, pelin-george, agw, agm, material, T1, T2,
ALT, lorena, igor, igor2, p1, canic}. Analysis of FSI generally constitutes
a broad area of research with applications in aeroelasticity, biomechanics,
biomedicine, etc. In particular, the study of wellposedness of various
linearized incompressible/compressible FSI models which manifest
parabolic-hyperbolic coupling has a large presence in the literature; see
e.g., \cite{clark, agw, agm, material, T1, T2, lorena, igor, canic} wherein
the Navier-Stokes equations are coupled with the wave/plate equation along a
fixed interface. The parabolic-hyperbolic nature of the system generally
results in major mathematical difficulties, principally because the coupling
mechanisms between the fluid and the solid PDE components inevitably
involves boundary terms which are strictly above the level of finite energy.
In the case of a \emph{compressible} flow component in the FSI system, the
analysis is further complicated: whereas for incompressible flows the
density of the fluid is assumed to be a constant and pressure an unknown
function determined by the fluid motion, for compressible flows the main
difficulty in the analysis of the density or pressure term, arises from the
fact that the density variable is no longer constant. Although in most of
the works in the literature, the motion of an isentropic compressible fluid
-- i.e., the density is a linear function of pressure -- is solely
considered, still, having to contend with this additional density (pressure)
variable presents a mathematical challenge, even at the level of
well-posedness.

In contrast to the growing literature on incompressible fluids the knowledge
about compressible fluids interacting with elastic solids is relatively
limited. In fact, the very first contribution to this problem is the
pioneering paper \cite{igor}, where both well-posedness and the existence of
global attractors were shown. In \cite{igor}, the author addresses the
simple case that the ambient vector field $\mathbf{U}=0$, i.e., i.e., the
linearization takes place about the trivial flow steady state. For this
canonical situation, he used Galerkin approximations to prove the
wellposedness result. However, the author duly noted that the case $\mathbf{U%
}\neq 0$ can not be handled in a similar fashion due to the existence of the
troublesome -- i.e., unbounded -- term $\mathbf{U\cdot }\nabla p$ in the
pressure equation $(\ref{1})_{1}.$

Subsequently, the linearized model in \cite{igor} with $\mathbf{U}\neq 0$
was considered in \cite{agw}. The linearization in \cite{agw}, about an
arbitrary non-zero state, gives rise to terms which induce a
non-dissipativity of the resulting FSI system. For this non-dissipative FSI
in \cite{agw}, a \emph{pure} velocity matching condition is imposed at the
interface (i.e., no material derivative is present in this boundary
condition). In contrast to the Galerkin approach applied in \cite{igor}, the
authors in \cite{agw} invoke a certain Lumer-Phillips methodology, with a
view of associating solutions of the fluid-structure dynamics with a
continuous semigroup which is not uniformly bounded. Subsequently, a more
convoluted FSI model was considered in \cite{material} where,
in addition to the aforesaid non-dissipative and unbounded terms brought
about by ambient field $\mathbf{U}\neq 0$, the associated flow-structure
interface is also under the effect of this ambient field $\mathbf{U}\neq 0$.
In particular, the flow and structure velocity matching boundary condition
also contains the \emph{material derivative} of the structure, which again
refers to the rate of change of the velocity on the deflected interaction
surface. In \cite{material} semigroup wellposedness is established by an
appropriate invocation of the Lumer-Phillips Theorem; this semigroup
generation is posed with respect to the \emph{entire} phase space $\mathcal{H%
}$, as defined in (\ref{stand}) above.

However, this wellposedness result in \cite{material} is not totally
satisfactory, from the standpoint of future studies into the long time
behavior of FSI solutions: while \cite{material} does provided existence and
uniqueness of solutions to the FSI system in the entire finite energy space $%
\mathcal{H}$, the resulting semigroup is \emph{not uniformly bounded}. In
particular, the semigroup estimate obtained in \cite{material} is $\mathcal{O%
}\left( e^{ {C(\mathbf{U})}t}\right) $, for $t>0$, where $C(\mathbf{U})=%
\frac{1}{2}\left\Vert \text{div}(\mathbf{U})\right\Vert _{\infty }+\epsilon $%
. This lack of FSI semigroup boundedness in \cite{material} will therefore
forestall any subsequent discussion of FSI stability. Accordingly, with a
mind toward future investigations of the asymptotic behavior of FSI
solutions, we are led to the following question: Is it possible to obtain a
semigroup wellposedness result, with the semigroup being bounded uniformly
in time, at least in some (inherently invariant) subspace of the finite
energy space?

\medskip

Motivated by this question, in the present work we consider the linearized
compressible flow-structure interaction model (\ref{1})-(\ref{3}), where $%
\mathbf{U}\neq 0$ and the material derivative term $\mathbf{U}\cdot \nabla
w_{1}$ is in place in the matching velocity boundary condition. Since our
main objective here is to obtain a \emph{uniformly bounded} semigroup, our
departure point is to find an appropriate subspace for the analysis. In
order to have semigroup generation on this sought-after subspace, the
prospective generator of the PDE system (\ref{1})-(\ref{3}) should be
invariant with respect to it. In this connection, it was shown in \cite{p1}
that if operator $\mathcal{A}_{0}:\mathcal{H}\rightarrow \mathcal{H}$ is the
FSI generator in \cite{agw}, which models the \textquotedblleft material
derivative\textquotedblright\ free FSI PDE interaction, then zero is an
eigenvalue of $\mathcal{A}_{0}$. (In particular, the action of $\mathcal{A}%
_{0}$ is given by $\mathcal{A}$ of (\ref{AAA}), with the appropriate domain
of definition [which includes the pure matching velocity boundary
condition]; see \cite{p1} and \cite{agw}). In fact, the null space of $%
\mathcal{A}_{0}$ is one dimensional, denoted here by $H_{N}$, and given
explicitly in (\ref{null}) below. The point of our mentioning $\mathcal{A}%
_{0}$ in the present problem is that, by way of obtaining a uniformly
bounded semigroup, we will take our candidate space of wellposedness to be
the orthogonal complement $H_{N}^{\bot }$, which is characterized by (\ref%
{null-ort}) below.

The necessity of finding an appropriate invariant subspace for uniformly
bounded FSI semigroup analysis motivates the presence of the additional (and
unbounded) term $w_{2}+\mathbf{U}\cdot \nabla w_{1}$ in (\ref{1})-(\ref{3}).
Let $\mathcal{A}_{1}:\mathcal{H}\rightarrow \mathcal{H}$ be the FSI
generator which gives rise to the wellposedness result in \cite{material};
the action of $\mathcal{A}_{1}$ is given by $\mathcal{A}$ of (\ref{AAA})
with the appropriate domain of definition, which includes the material
derivative term matching velocity boundary condition; see p. 342 of \cite%
{material}. As thus constituted,  $H_{N}^{\bot }$ is \emph{not} invariant
with respect to $\mathcal{A}_{1}$. However, if we define an operator $B$
which abstractly models the unbounded term $w_{2}+\mathbf{U}\cdot \nabla
w_{1}$ in (\ref{1})-(\ref{3}), as well as the energy level term $\text{div}(%
\mathbf{U})p\,$, then with the appropriate domain of definition, $%
H_{N}^{\bot }$ \emph{is }-invariant with respect to the modeling operator $(%
\mathcal{A}+B)$ of (\ref{1})-(\ref{3}. (This is Lemma 3 below).

Having established said invariance, we will subsequently proceed to show
that, with respect to a certain inner product which is equivalent to the
standard $\mathcal{H}$-inner product, $(\mathcal{A}+B)$ generates a
contraction semigroup on $H_{N}^{\bot }$, for ambient field $\mathbf{U}$
small enough in norm (and so the semigroup will be uniformly bounded with
respect to the standard $\mathcal{H}$-norm). In consequence, the PDE system (%
\ref{1})-(\ref{3}) is wellposed for initial data $[p_{0},u_{0},w_{a},w_{b}]$
taken from $H_{N}^{\bot }$.

\subsection{Challenges encountered and Novelty}

In the present work, we establish a result of semigroup wellposedness so as
to ascertain the existence and uniqueness of solutions to (\ref{1})-(\ref{3}%
), for Cauchy data in $H_{N}^{\bot }$. Moreover, we find this FSI semigroup
is uniformly bounded in time. This boundedness will have implications in our
future analysis of long time behavior of the solutions to the PDE system (%
\ref{1})-(\ref{3}). The main challenging points and improvements in our
treatment are as follows:\newline
\newline
\textbf{(a)}  \textit{Uniformly bounded semigroup in $H_{N}^{\bot }\subset 
\mathcal{H}:$} By way of fulfilling our objective of obtaining a uniformly
bounded semigroup, we adopt a Lumer-Phillips approach, in an appropriate
inner product. To wit, to establish dissipativity we topologize the $(%
\mathcal{A}+B)$-invariant space $H_{N}^{\bot }$ with an inner product which
is equivalent to the standard $\mathcal{H}$-inner product. In this
construction, we make use of a multiplier $\nabla \psi $ introduced in \cite%
{igor} (defined in (\ref{Igormap}) below) and previously used in \cite{p1};
the multiplier exploits the characterization of $H_{N}^{\bot }$ in (\ref%
{null-ort}) below. In addition, inasmuch as we are after a FSI solution
semigroup which is uniformly bounded in time, we give a proof for the
maximality (or the range condition) of the operator $(\mathcal{A}+B)$ which
is quite different than that in \cite{material}. Unlike \cite{material}
where the theory of linear perturbations is used so as to yield a semigroup
whose bound is of said exponential order, in the present we totally eschew
the Lax-Milgram approach of \cite{material} and instead invoke functional
analytical and PDE methods to show that $[\lambda I-(\mathcal{A}+B)]$ is
invertible for any $\lambda >0$. This entails to show that $[\lambda I-(%
\mathcal{A}+B)]$ is a closed linear operator that has a dense range in $%
H_{N}^{\bot }$ and enjoys the inverse estimate (\ref{36}) below. By these
means we establish that $(\mathcal{A}+B)$ is maximal dissipative with
respect to said appropriate inner product, and so then a uniformly bounded
semigroup on the standard $\mathcal{H}$-inner product. Our uniformly bounded
semigroup result is valid under the assumption that ambient vector field $%
\mathbf{U}$ is small enough with respect to an appropriate measurement; see (%
\ref{normU}) below. However, one should bear in mind that the present of $%
\mathbf{U}\neq 0$ gives rise to terms -- namely, $\mathbf{U}\cdot \nabla p$
and $\mathbf{U}\cdot \nabla w_{1}$ (as it appears twice) -- which are \emph{%
unbounded} with respect to the underlying finite energy of the FSI system.
Thus, our method of proof does not at all involve some bounded perturbation
result which exploits the smallness of $\mathbf{U}$. \newline
\newline
\textbf{(b)} $H_{N}^{\bot }$\textit{- invariant generator:} Subsequent to
our work \cite{material}, our original immediate objective was to analyze
the stability properties of the material derivative FSI system in \cite%
{material}. However, because of the presence of the zero eigenvalue, as
mentioned above, it is problematic to consider the strong or exponential
decay problem in the entire phase space $\mathcal{H}$. Accordingly, we are
led here to consider wellposedness (and future stability) analysis on $%
H_{N}^{\bot }$ as given in (\ref{null-ort}) below.(Since $H_{N}$ of (\ref%
{null}) is only one dimensional, --see \cite[Lemma 6]{p1}-- we would not
lose too much.) However, as we said above, $H_{N}^{\bot }$ is not invariant
with respect to the material derivative FSI generator $\mathcal{A}_{1}:%
\mathcal{H}\rightarrow \mathcal{H}$ in \cite{material}.  (The unbounded
material derivative term in particular contributes to the non-invariance.)
However, the presence of the terms $-w_{2}-\mathbf{U}\cdot \nabla w_{1}$ and 
$\text{div}(\mathbf{U})p$ in the respective structural displacement and
pressure equations in (\ref{1})-(\ref{3}) gives rise to an invariance on 
$H_{N}^{\bot }$. (Actually, the term $\text{div}(\mathbf{U})p$  was blithely
disgarded during the linearization process in \cite{material}, since it is a
benign energy level term.) Thus, these two terms are captured abstractly by
the \textquotedblleft feedback\textquotedblright\ operator $B$ in (\ref%
{feedbackB}) below. We say feedback, since $B$ is incorporated so as to
beneficently provide the pre-requisite that $H_{N}^{\bot }$ $\ $is $(%
\mathcal{A}+B)$-invariant$.$ We note that the presence of $B$ does \emph{not}
at all give rise to a fortuitous cancellation of terms so as to have
dissipativity with respect to the standard $\mathcal{H}$-inner product. The
operator $B$ allows only for said invariance property, so that our
wellposedness and uniform bounded semigroup problem can be considered on the
slightly smaller subspace $H_{N}^{\bot }$. As we said, our finding that the
FSI semigroup is uniformly bounded in time in $H_{N}^{\bot }$ will
constitute a departure point in our future work on stability properties of
the FSI PDE model.\newline
\newline
\textbf{(c) }\textit{Less regularity required on the ambient vector field $%
\mathbf{U}:$} The presence of the nontrivial ambient flow field $\mathbf{U}$
causes substantial difficulties in the wellposedness analysis. In this case $%
\mathbf{U}\neq 0$, the desired result for a FSI system -- with material
derivative present in the matching velocities BC -- on the entire phase
space $\mathcal{H}$ was obtained in the earlier work \cite{material} (with
recall, the semigroup estimate $\mathcal{O%
}\left( e^{ {C(\mathbf{U})}t}\right) $, for $t>0$, where $C(\mathbf{U})=\frac{1}{2}\left\Vert \text{div%
}(\mathbf{U})\right\Vert _{\infty }+\epsilon $). In the course of applying
the Lax-Milgram Theorem in \cite{material}, there is the need to deal with
the pressure PDE component of an associated static compressible FSI system.
In this regard, a methodology, based upon a treatment of (uncoupled)
transport equations in \cite{dV}, was applied to solve for the pressure and
fluid velocity components of said static FSI system. However this approach
compelled the authors in \cite{material} to impose that $\mathbf{U}\in 
\mathbf{H}^{3}(\mathcal{O})$. In the present work, we require that small
enough ambient field $\mathbf{U}\in \mathbf{H}^{1}(\mathcal{O})$ obey the
less stringent regularity assumptions in (\ref{W}).

\subsection{Notation}

\ Throughout, for a given domain $D$, the norm of corresponding space $%
L^{2}(D)$ will be denoted as $||\cdot ||_{D}$ (or simply $||\cdot ||$ when
the context is clear). Inner products in $L^{2}(\mathcal{O})$ or $\mathbf{L}%
^{2}(\mathcal{O})$ will be denoted by $(\cdot ,\cdot )_{\mathcal{O}}$,
whereas inner products $L^{2}(\partial \mathcal{O})$ will be written as $%
\langle \cdot ,\cdot \rangle _{\partial \mathcal{O}}$. We will also denote
pertinent duality pairings as $\left\langle \cdot ,\cdot \right\rangle
_{X\times X^{\prime }}$ for a given Hilbert space $X$. The space $H^{s}(D)$
will denote the Sobolev space of order $s$, defined on a domain $D$; $%
H_{0}^{s}(D)$ will denote the closure of $C_{0}^{\infty }(D)$ in the $%
H^{s}(D)$-norm $\Vert \cdot \Vert _{H^{s}(D)}$. We make use of the standard
notation for the boundary trace of functions defined on $\mathcal{O}$, which
are sufficently smooth: i.e., for a scalar function $\phi \in H^{s}(\mathcal{%
O})$, $\frac{1}{2}<s<\frac{3}{2}$, $\gamma (\phi )=\phi \big|_{\partial 
\mathcal{O}},$ which is a well-defined and surjective mapping on this range
of $s$, owing to the Sobolev Trace Theorem on Lipschitz domains (see e.g., 
\cite{necas}, or Theorem 3.38 of \cite{Mc}).

\subsection{Plan of the paper}

The paper is organized as follows: In Section 2, we first provide the
framework which will be required for our proof of semigroup wellposedness.
In particular, we carefully describe the FSI generator $(\mathcal{A}+B)$ and
its domain, as well as the equivalent inner product which will be used for
our proof of wellposedness on subspace $H_{N}^{\bot }$ of (\ref{null-ort})
below. Moreover, we show that $H_{N}^{\bot }$ is $(\mathcal{A}+B)$%
-invariant. In Section 3, we establish the maximal dissipativity of $(%
\mathcal{A}+B)$ with respect to said special inner product, thereby allowing
for an appeal to the Lumer-Phillips Theorem. In the course of our work, we
will have need of a classic lemma of functional analysis, as well as
the adjoint of $(\mathcal{A}+B)$.  These ingredients are given in Section 4,
the Appendix.

\section{Functional Setting and Preliminaries}

With respect to the above setting, the PDE system given in (\ref{1})-(\ref{3}%
) can be written as an ODE in Hilbert space $\mathcal{H}.$ That is, if $\Phi
(t)=\left[ p,u,w_1,w_{2}\right] \in C([0,T];\mathcal{H})$ solves the problem
(\ref{1})-(\ref{3}), then there is a modeling operator $\mathcal{A}+B:D(%
\mathcal{A}+B)\subset \mathcal{H}\rightarrow \mathcal{H}$ such that $\Phi
(\cdot )$ satisfies 
\begin{eqnarray}
\dfrac{d}{dt}\Phi (t) &=&(\mathcal{A}+B)\Phi (t);  \notag \\
\Phi (0) &=&\Phi _{0}  \label{ODE}
\end{eqnarray}%
Here the operators $\mathcal{A}$ and the feedback operator $B$ are defined
as follows:

\begin{equation}
\mathcal{A}=\left[ 
\begin{array}{cccc}
-\mathbf{U}\mathbb{\cdot }\nabla (\cdot ) & -\text{div}(\cdot ) & 0 & 0 \\ 
-\mathbb{\nabla (\cdot )} & \text{div}\sigma (\cdot )-\eta I-\mathbf{U}%
\mathbb{\cdot \nabla (\cdot )} & 0 & 0 \\ 
0 & 0 & 0 & I \\ 
\left. \left[ \cdot \right] \right\vert _{\Omega } & -\left[ 2\nu \partial
_{x_{3}}(\cdot )_{3}+\lambda \text{div}(\cdot )\right] _{\Omega } & -\Delta
^{2} & 0%
\end{array}%
\right] ;  \label{AAA}
\end{equation}%
and 
\begin{equation}
B=\left[ 
\begin{array}{cccc}
-\text{div}(\mathbf{U)(\cdot )} & 0 & 0 & 0 \\ 
0 & 0 & 0 & 0 \\ 
0 & 0 & \mathbf{U}\mathbb{\cdot }\nabla (\cdot ) & 0 \\ 
0 & 0 & 0 & 0%
\end{array}%
\right] .  \label{feedbackB}
\end{equation}%
\noindent Then, $D(\mathcal{A}+B)\subset \mathcal{H}$ is given by

\begin{equation*}
D(\mathcal{A}+B)=\{(p_{0},u_{0},w_{1},w_{2})\in L^{2}(\mathcal{O})\times 
\mathbf{H}^{1}(\mathcal{O})\times H_{0}^{2}(\Omega )\times L^{2}(\Omega )~:~%
\text{properties }(A.i)\text{--}(A.vi)~~\text{hold}\},
\end{equation*}%
where

\begin{enumerate}
\item[\textbf{(A.i)}] $\mathbf{U}\cdot \nabla p_{0}\in L^{2}(\mathcal{O})$

\item[\textbf{(A.ii)}] $\text{div}~\sigma (u_{0})-\nabla p_{0}\in \mathbf{L}%
^{2}(\mathcal{O})$ (So, $\left[ \sigma (u_{0})\mathbf{n}-p_{0}\mathbf{n}%
\right] _{\partial \mathcal{O}}\in \mathbf{H}^{-\frac{1}{2}}(\partial 
\mathcal{O})$)

\item[\textbf{(A.iii)}] $-\Delta ^{2}w_{1}-\left[ 2\nu \partial
_{x_{3}}(u_{0})_{3}+\lambda \text{div}(u_{0})\right] _{\Omega
}+p_{0}|_{\Omega }\in L^{2}(\Omega )$ (by elliptic regularity theory $w_1
\in H^{3}(\Omega))$

\item[\textbf{(A.iv)}] $\left( \sigma (u_{0})\mathbf{n}-p_{0}\mathbf{n}%
\right) \bot ~TH^{1/2}(\partial \mathcal{O})$. That is, 
\begin{equation*}
\left\langle \sigma (u_{0})\mathbf{n}-p_{0}\mathbf{n},\mathbf{\tau }%
\right\rangle _{\mathbf{H}^{-\frac{1}{2}}(\partial \mathcal{O})\times 
\mathbf{H}^{\frac{1}{2}}(\partial \mathcal{O})}=0\text{ \ in }\mathcal{D}%
^{\prime }(\mathcal{O})\text{\ for every }\mathbf{\tau }\in
TH^{1/2}(\partial \mathcal{O})
\end{equation*}

\item[\textbf{(A.v)}] $w_{2}+\mathbf{U}\cdot \nabla w_{1}\in
H_{0}^{2}(\Omega )$ (and so $w_{2}\in H_{0}^{1}(\Omega ))$

\item[\textbf{(A.vi)}] The flow velocity component $u_{0}=\mathbf{f}_{0}+%
\widetilde{\mathbf{f}}_{0}$, where $\mathbf{f}_{0}\in \mathbf{V}_{0}$ and $%
\widetilde{\mathbf{f}}_{0}\in \mathbf{H}^{1}(\mathcal{O})$ satisfies%
\footnote{%
The existence of an $\mathbf{H}^{1}(\mathcal{O})$-function $\widetilde{%
\mathbf{f}}_{0}$ with such a boundary trace on Lipschitz domain $\mathcal{O}$
is assured; see e.g., Theorem 3.33 of \cite{Mc}.}%
\begin{equation*}
\widetilde{\mathbf{f}}_{0}=%
\begin{cases}
0 & ~\text{ on }~S \\ 
(w_{2}+\mathbf{U}\cdot \nabla w_{1})\mathbf{n} & ~\text{ on}~\Omega%
\end{cases}%
\end{equation*}%
\noindent (and so $\mathbf{f}_{0}|_{\partial \mathcal{O}}\in
TH^{1/2}(\partial \mathcal{O})$).\newline
\end{enumerate}

Moreover, we denote 
\begin{equation}
H_{N}=Span\left\{ \left[ 
\begin{array}{c}
1 \\ 
0 \\ 
{{\mathring{A}}^{-1}(1)} \\ 
0%
\end{array}%
\right] \right\} ,  \label{null}
\end{equation}%
where $\mathring{A}:L^{2}(\Omega )\rightarrow L^{2}(\Omega )$ is the
elliptic operator 
\begin{equation*}
\mathring{A}\varpi =\Delta ^{2}\varpi \text{, with }D(\mathring{A})=\{w\in
H_{0}^{2}(\Omega ):\Delta ^{2}w\in L^{2}(\Omega )\},
\end{equation*}%
and%
\begin{equation}
H_{N}^{\bot }=\mathcal{\{}[p_{0},u_{0},w_{1},w_{2}]\in \mathcal{H}%
:\int\limits_{\mathcal{O}}p_{0}d\mathcal{O}+\int\limits_{\Omega
}w_{1}d\Omega =0\mathcal{\}}\text{\ \ }  \label{null-ort}
\end{equation}%
(see \cite[Lemma 6]{p1}).

As stated before, in order to be able to obtain a uniformly bounded
(contraction) semigroup, we analyze the wellposedness of problem (\ref{1})-(%
\ref{3}) in the reduced space $H_{N}^{\bot }$. This will require us to
re-topologize the phase space $\mathcal{H}$ with a new inner product to be
used in $H_{N}^{\bot }$ and equivalent to the natural inner product given in
(\ref{stand}). Now, with the above notation let us take $\varphi =\left[
p_{0},u_{0},w_{1},w_{2}\right] \in H_{N}^{\bot },$ $\widetilde{\varphi }=%
\left[ \widetilde{p}_{0},\widetilde{u}_{0},\widetilde{w}_{1},\widetilde{w}%
_{2}\right] \in H_{N}^{\bot }.$ Then the new inner product is given as 
\begin{equation*}
((\varphi ,\widetilde{\varphi }))_{H_{N}^{\bot }}=(p_{0},p_{0})_{\mathcal{O}%
}+(u_{0}-\alpha D(g\cdot \nabla w_{1})e_{3}+\xi \nabla \psi (p_{0},w_{1}),%
\widetilde{u}_{0}-\alpha D(g\cdot \nabla \widetilde{w}_{1})e_{3}+\xi \nabla
\psi (\widetilde{p}_{0},\widetilde{w}_{1}))_{\mathcal{O}}
\end{equation*}%
\begin{equation}
+(\Delta w_{1},\Delta \widetilde{w}_{1})_{\Omega }+(w_{2}+h_{\alpha }\cdot
\nabla w_{1}+\xi w_{1},\widetilde{w}_{2}+h_{\alpha }\cdot \nabla \widetilde{w%
}_{1}+\xi \widetilde{w}_{1})_{\Omega },  \label{innpro}
\end{equation}%
and in turn the norm 
\begin{equation*}
\left\Vert \left\vert \varphi \right\vert \right\Vert _{H_{N}^{\bot }}=\sqrt{%
\left( \left( \varphi ,\varphi \right) \right) _{H_{N}^{\bot }}}
\end{equation*}%
\begin{equation}
=\left\Vert p_{0}\right\Vert _{\mathcal{O}}^{2}+\left\Vert u_{0}-\alpha
D(g\cdot \nabla w_{1})e_{3}+\xi \nabla \psi (p_{0},w_{1})\right\Vert _{%
\mathcal{O}}^{2}+\left\Vert \Delta w_{1}\right\Vert _{\Omega
}^{2}+\left\Vert w_{2}+h_{\alpha }\cdot \nabla w_{1}+\xi w_{1}\right\Vert
_{\Omega }^{2}  \label{specnorm}
\end{equation}%
for every $\varphi =\left[ p_{0},u_{0},w_{1},w_{2}\right] \in H_{N}^{\bot }.$
Here, \newline
\newline
\textbf{(i)} the function $\psi =\psi (f,g)\in H^{1}(\mathcal{O)}$ is
considered to solve the following BVP for data $f\in L^{2}(\mathcal{O})$ and 
$g\in L^{2}(\Omega )$

\begin{equation}
\left\{ 
\begin{array}{c}
-\Delta \psi =f\text{ \ \ \ in \ }\mathcal{O} \\ 
\frac{\partial \psi }{\partial n}=0\text{ \ \ on \ }S \\ 
\frac{\partial \psi }{\partial n}=g\text{ \ \ on \ }\Omega 
\end{array}%
\right.   \label{Igormap}
\end{equation}%
with the compatibility condition%
\begin{equation}
\int\limits_{\mathcal{O}}fd\mathcal{O+}\int\limits_{\Omega }gd\Omega =0.
\label{Igormap1}
\end{equation}%
We should note that by known elliptic regularity results for the Neumann
problem on Lipschitz domains--see e.g; \cite{JK}-- we have%
\begin{equation}
\left\Vert \psi (f,g)\right\Vert _{H^{\frac{3}{2}}(\mathcal{O)}}\leq \left[
\left\Vert f\right\Vert _{\mathcal{O}}+\left\Vert g\right\Vert _{\partial 
\mathcal{O}}\right] .  \label{psireg}
\end{equation}%
\textbf{(ii)} the map $D(\cdot )$ is the Dirichlet map that extends boundary
data $\varphi $ defined on $\Omega $ to a harmonic function in $\mathcal{O}$
satisfying:%
\begin{equation*}
D\varphi =f\Leftrightarrow \left\{ 
\begin{array}{c}
\Delta f=0\text{ \ \ in \ }\mathcal{O} \\ 
f|_{\mathcal{\partial O}}=\varphi |_{ext}\text{ \ \ on \ \ }\mathcal{%
\partial O}%
\end{array}%
\right. 
\end{equation*}%
where 
\begin{equation*}
\varphi |_{ext}=%
\begin{cases}
0 & \text{ on }~S \\ 
\phi  & \text{ on }~\Omega 
\end{cases}%
\end{equation*}%
Then by, e.g., \cite[Theorem 3.3.8]{Mc}, and Lax-Milgram, we deduce that 
\begin{equation}
D\in \mathcal{L}\big(H_{0}^{1/2+\epsilon }(\Omega );H^{1}(\mathcal{O})\big).
\label{liftnorm}
\end{equation}%
\textbf{(iii)} the vector field $h_{\alpha }(\cdot )$ is defined as $%
h_{\alpha }(\cdot )=\mathbf{U}|_{\Omega }-\alpha g,$ where $g(\cdot )$ is a $%
C^{2}$ extension of the normal vector $\mathbf{n}(x)$ (with respect to $%
\Omega )$ and we specify the parameter $\alpha $ to be 
\begin{equation}
\alpha =2\left\Vert \mathbf{U}\right\Vert _{\ast },  \label{alpha}
\end{equation}%
where 
\begin{equation}
\left\Vert \mathbf{U}\right\Vert _{\ast }=\left\Vert \mathbf{U}\right\Vert _{L^{\infty }(%
\mathcal{O})}+\left\Vert \text{div}(\mathbf{U)}\right\Vert _{L^{\infty }(%
\mathcal{O})}+\left\Vert \mathbf{U}|_{\Omega }\right\Vert _{C^{2}(\overline{%
\Omega })}.  \label{normU}
\end{equation}%
Also, $\xi $ is eventually specified in (\ref{33}). Since the main goal of
this manuscript is to have the semigroup wellposedness in the subspace $%
H_{N}^{\bot }$, in what follows, for the sake of simplicity, we will use the
notation 
\begin{equation*}
(\mathcal{A}+B)|_{H_{N}^{\bot }}=(\mathcal{A}+B).
\end{equation*}%
Before beginning our wellposedness analysis, we firstly need to justify that
the semigroup generator is indeed $H_{N}^{\bot }-$ invariant. This is given
in the following lemma:

\begin{lemma}
\label{invariant} The operator $(\mathcal{A}+B)$ is $H_{N}^{\bot }-$
invariant; that is $(\mathcal{A}+B):D(\mathcal{A}+B)\cap H_{N}^{\bot
}\subset H_{N}^{\bot }\rightarrow H_{N}^{\bot }.$
\end{lemma}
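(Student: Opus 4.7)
The plan is to verify the defining linear constraint of $H_N^\bot$ directly: writing $(\mathcal{A}+B)\varphi=[q,v,z_1,z_2]$ for $\varphi=[p_0,u_0,w_1,w_2]\in D(\mathcal{A}+B)\cap H_N^\bot$, I must show that
\begin{equation*}
\int_{\mathcal{O}} q\, d\mathcal{O}+\int_\Omega z_1\, d\Omega =0.
\end{equation*}
Reading off (\ref{AAA}) and (\ref{feedbackB}), the first and third components combine to
\begin{equation*}
q=-\mathbf{U}\cdot\nabla p_0-\mathrm{div}(u_0)-\mathrm{div}(\mathbf{U})p_0=-\mathrm{div}(u_0)-\mathrm{div}(\mathbf{U}\, p_0),\qquad z_1=w_2+\mathbf{U}\cdot\nabla w_1.
\end{equation*}
So the invariance reduces to proving
\begin{equation*}
\int_{\mathcal{O}}\mathrm{div}(u_0)\, d\mathcal{O}+\int_{\mathcal{O}}\mathrm{div}(\mathbf{U}\, p_0)\, d\mathcal{O}=\int_\Omega (w_2+\mathbf{U}\cdot\nabla w_1)\, d\Omega.
\end{equation*}

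First I would dispatch the $u_0$ integral by the divergence theorem, which is legitimate since (A.vi) gives $u_0\in\mathbf{H}^1(\mathcal{O})$; the boundary conditions $u_0\cdot\mathbf{n}=0$ on $S$ and $u_0\cdot\mathbf{n}=w_2+\mathbf{U}\cdot\nabla w_1$ on $\Omega$ deliver exactly the right-hand side above. The more delicate step is the second integral, because $p_0$ is only in $L^2(\mathcal{O})$ and has no well-defined pointwise trace. The plan is to observe that the vector field $\mathbf{U}\,p_0$ lies in $L^2(\mathcal{O})$, and that $\mathrm{div}(\mathbf{U}\,p_0)=\mathrm{div}(\mathbf{U})\,p_0+\mathbf{U}\cdot\nabla p_0$ also lies in $L^2(\mathcal{O})$ — the first summand by $\mathrm{div}(\mathbf{U})\in L^\infty(\mathcal{O})$ (from $\mathbf{U}\in W$), the second by assumption (A.i). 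Thus $\mathbf{U}\,p_0$ belongs to the space $\{v\in\mathbf{L}^2(\mathcal{O}):\mathrm{div}(v)\in L^2(\mathcal{O})\}$, on which the normal trace is a well-defined element of $H^{-1/2}(\partial\mathcal{O})$. Since $\mathbf{U}\cdot\mathbf{n}=0$ on $\partial\mathcal{O}$ by $\mathbf{U}\in\mathbf{V}_0$, a standard density/mollification argument (approximating $p_0$ by smooth functions and passing to the limit in the identity $\int_{\mathcal{O}}\mathrm{div}(\mathbf{U}\,p_0^\varepsilon)=\int_{\partial\mathcal{O}}(\mathbf{U}\cdot\mathbf{n})p_0^\varepsilon=0$) shows that this normal trace vanishes, whence $\int_{\mathcal{O}}\mathrm{div}(\mathbf{U}\,p_0)\, d\mathcal{O}=0$. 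Collecting the two contributions yields exactly $\int_\Omega(w_2+\mathbf{U}\cdot\nabla w_1)\, d\Omega$, which cancels $\int_\Omega z_1\, d\Omega$.

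The main (mild) obstacle is precisely this rigorous justification of the vanishing boundary integral for $\mathbf{U}\,p_0$ without recourse to a trace of $p_0$; apart from it, the argument is a pure bookkeeping check. What this computation also makes transparent is the dual role of the feedback operator $B$: the term $-\mathrm{div}(\mathbf{U})p$ in the pressure equation is exactly what converts $-\mathbf{U}\cdot\nabla p_0$ into a total divergence $-\mathrm{div}(\mathbf{U}\,p_0)$, and the term $\mathbf{U}\cdot\nabla w_1$ in the displacement equation is what the matching boundary condition produces on $\Omega$ after applying the divergence theorem to $u_0$. Without both corrections the two $\mathbf{U}$-dependent contributions would not cancel, and $H_N^\bot$ would fail to be invariant — consistent with the discussion in the introduction explaining why $B$ was introduced. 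The output of this verification is that $(\mathcal{A}+B)\bigl(D(\mathcal{A}+B)\cap H_N^\bot\bigr)\subset H_N^\bot$, which is the assertion of the lemma.
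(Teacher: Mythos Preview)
Your argument is correct and takes a genuinely different route from the paper's proof. The paper proceeds abstractly via the adjoint: it takes $\widetilde{\varphi}\in H_N$ and computes $(\mathcal{A}\varphi,\widetilde{\varphi})_{\mathcal{H}}=(\varphi,\mathcal{A}^{\ast}\widetilde{\varphi})_{\mathcal{H}}$ using the explicit decomposition $\mathcal{A}^{\ast}=L_1+L_2$ derived in the Appendix (Lemma~\ref{adj}), then manipulates the resulting expression until it is recognized as $-(B\varphi,\widetilde{\varphi})_{\mathcal{H}}$. You instead work directly with the integral characterization (\ref{null-ort}) of $H_N^{\bot}$: you read off the first and third components of $(\mathcal{A}+B)\varphi$, group $-\mathbf{U}\cdot\nabla p_0-\mathrm{div}(\mathbf{U})p_0$ into the total divergence $-\mathrm{div}(\mathbf{U}p_0)$, and apply the divergence theorem twice. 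Your approach is more elementary and self-contained, since it does not rely on the rather involved adjoint computation; the paper's approach has the advantage that the adjoint is needed anyway for the maximality argument (Lemma~\ref{denserange}), so no extra machinery is introduced. Your treatment of the delicate point---that $\int_{\mathcal{O}}\mathrm{div}(\mathbf{U}p_0)\,d\mathcal{O}=0$ despite $p_0$ having no trace---via the $H(\mathrm{div})$ normal-trace framework and the condition $\mathbf{U}\cdot\mathbf{n}|_{\partial\mathcal{O}}=0$ is the right way to handle it, and your closing remark on the role of $B$ nicely isolates \emph{why} both feedback terms are indispensable for the invariance.
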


\begin{proof}
Let $\varphi =\left[ p_{0},u_{0},w_{1},w_{2}\right] \in H_{N}^{\bot },$ $%
\widetilde{\varphi }=\left[ \widetilde{p}_{0},\widetilde{u}_{0},\widetilde{w}%
_{1},\widetilde{w}_{2}\right] \in H_{N}.$ Recalling the adjoint operator $%
\mathcal{A}^{\ast }$ in (\ref{adj-AplusB}) we have%
\begin{equation*}
(\mathcal{A}\varphi ,\widetilde{\varphi })_{\mathcal{H}}=(\varphi ,\mathcal{A%
}^{\ast }\widetilde{\varphi })_{\mathcal{H}}=(\varphi ,L_{1}\widetilde{%
\varphi })_{\mathcal{H}}+(\varphi ,L_{2}\widetilde{\varphi })_{\mathcal{H}%
}=0+(\varphi ,L_{2}\widetilde{\varphi })_{\mathcal{H}}
\end{equation*}

\begin{equation*}
=\int\limits_{\mathcal{O}}p_{0}\text{div}(\mathbf{U)}1d\mathcal{O+}%
\int\limits_{\Omega }\Delta w_{1}\Delta {{\mathring{A}}^{-1}}\left\{ \text{%
div}[U_{1},U_{2}]\right\} 1d\Omega
\end{equation*}%
\begin{equation*}
=\int\limits_{\mathcal{O}}p_{0}\text{div}(\mathbf{U)}1d\mathcal{O+}%
\int\limits_{\Omega}w_{1}\text{div}[U_{1},U_{2}]1d\Omega
\end{equation*}%
\begin{equation*}
=\int\limits_{\mathcal{O}}\text{div}(\mathbf{U)}p_{0}1d\mathcal{O-}%
\int\limits_{\Omega}(\nabla w_{1}\cdot \mathbf{U)}1d\Omega
\end{equation*}%
\begin{equation*}
=\int\limits_{\mathcal{O}}\text{div}(\mathbf{U)}p_{0}1d\mathcal{O-}%
\int\limits_{\Omega}\Delta (\nabla w_{1}\cdot \mathbf{U)}\Delta {{\mathring{A%
}}^{-1}(}1)d\Omega
\end{equation*}%
\begin{equation*}
=\left( \left[ 
\begin{array}{c}
\text{div}(\mathbf{U)}p_{0} \\ 
0 \\ 
-\nabla w_{1}\cdot \mathbf{U} \\ 
0%
\end{array}%
\right] ,\left[ 
\begin{array}{c}
1 \\ 
0 \\ 
{{\mathring{A}}^{-1}(}1) \\ 
0%
\end{array}%
\right] \right) _{\mathcal{H}}
\end{equation*}%
\begin{equation*}
=-\left( B\varphi ,\widetilde{\varphi }\right) _{\mathcal{H}}
\end{equation*}%
which yields that%
\begin{equation*}
(\mathcal{A}\varphi ,\widetilde{\varphi })_{\mathcal{H}}=-\left( B\varphi ,%
\widetilde{\varphi }\right) _{\mathcal{H}}
\end{equation*}%
or%
\begin{equation*}
((\mathcal{A}+B)\varphi ,\widetilde{\varphi })_{\mathcal{H}}=0
\end{equation*}%
for every $\varphi =\left[ p_{0},u_{0},w_{1},w_{2}\right] \in H_{N}^{\bot }.$
Hence, $(\mathcal{A}+B)$ is $H_{N}^{\bot }-$invariant.
\end{proof}

\section{Wellposedness}

This section is devoted to showing the semigroup wellposedness of the PDE
system (\ref{1})-(\ref{3}). The main result of this paper is given as
follows:

\begin{theorem}
\label{wp} Let Condition \ref{cond} hold. Moreover, let $\left\Vert \mathbf{U%
}\right\Vert _{\ast }$ be sufficiently small. Then the operator $(\mathcal{A}%
+B):D(\mathcal{A}+B)\cap H_{N}^{\bot }\rightarrow H_{N}^{\bot }$, as defined
via (\ref{AAA}) and (\ref{feedbackB}), generates a strongly continuous
semigroup $\{e^{(\mathcal{A}+B)t}\}_{t\geq 0}$ on $H_{N}^{\bot }.$ Hence,
for every initial data $\left[ p_{0},{u}_{0},w_{1_{0}},w_{2_{0}}\right] \in
H_{N}^{\bot },$ the solution $\left[ p(t),{u}(t),w_{1}(t),w_{2}(t)\right] $
of problem (\ref{1})-(\ref{3}) is given continuously by 
\begin{equation}
\left[ 
\begin{array}{c}
p(t) \\ 
u(t) \\ 
w_{1}(t) \\ 
w_{2}(t)%
\end{array}%
\right] =e^{(\mathcal{A}+B)t}\left[ 
\begin{array}{c}
p_{0} \\ 
u_{0} \\ 
w_{1_{0}} \\ 
w_{2_{0}}%
\end{array}%
\right] \in C([0,T];H_{N}^{\bot }).
\end{equation}
Moreover, this semigroup is uniformly bounded in time with respect to the
standard $\mathcal{H}$-inner product. (With respect to the special norm in (%
\ref{specnorm}), the semigroup is in fact a contraction.)
\end{theorem}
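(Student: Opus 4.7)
The strategy is to apply the Lumer--Phillips Theorem to $(\mathcal{A}+B)$ on $H_N^{\bot}$ topologized by the special inner product $((\cdot,\cdot))_{H_N^{\bot}}$ in (\ref{innpro}). Since this inner product is equivalent to the standard $\mathcal{H}$-inner product (for the chosen parameters $\alpha,\xi$ with $\|\mathbf{U}\|_{\ast}$ small), a contraction semigroup with respect to $((\cdot,\cdot))_{H_N^{\bot}}$ automatically yields a uniformly bounded semigroup in the $\mathcal{H}$-norm. Invariance of $H_N^{\bot}$ under $(\mathcal{A}+B)$ is already secured by Lemma \ref{invariant}. What remains is to verify: (i) dense domain; (ii) dissipativity of $(\mathcal{A}+B)$ in the special inner product; (iii) the range condition $\mathrm{Range}(\lambda I - (\mathcal{A}+B)) = H_N^{\bot}$ for some $\lambda>0$.

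For dissipativity, I would compute $(((\mathcal{A}+B)\varphi,\varphi))_{H_N^{\bot}}$ termwise from (\ref{innpro}). The built-in shifts $-\alpha D(g\cdot\nabla w_1)e_3$ and $+\xi\nabla\psi(p_0,w_1)$ in the fluid slot, together with $h_\alpha\cdot\nabla w_1+\xi w_1$ in the mechanical-velocity slot, are designed precisely so that integration by parts against the unbounded terms $\mathbf{U}\cdot\nabla p_0$ and $\mathbf{U}\cdot\nabla w_1$ produces cancellations with the boundary coupling $u\cdot\mathbf{n}|_{\Omega}=w_2+\mathbf{U}\cdot\nabla w_1$. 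The multiplier $\nabla\psi(p_0,w_1)$ is admissible because $\varphi\in H_N^{\bot}$ satisfies the compatibility condition (\ref{Igormap1}), and the elliptic estimate (\ref{psireg}) provides the required control. After applying Green's identities, using $\mathbf{U}\cdot\mathbf{n}=0$ on $\partial\mathcal{O}$ and the coupling BC on $\Omega$, the identity reduces to manifestly negative quadratic contributions --- coming from $-\mathrm{div}\,\sigma(u_0)$, $-\eta u_0$, and the coercive boundary term produced by the $\alpha g$-shift --- plus residual bilinear forms of order $\|\mathbf{U}\|_{\ast}$. Fixing $\alpha=2\|\mathbf{U}\|_{\ast}$ as in (\ref{alpha}), choosing $\xi$ as in (\ref{33}), and assuming $\|\mathbf{U}\|_{\ast}$ small enough, these residuals are absorbed and the form becomes non-positive.

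For the range condition, I would eschew the Lax--Milgram argument of \cite{material} and reason directly. Dissipativity yields both injectivity and the a priori resolvent bound (\ref{36}), whence $\lambda I-(\mathcal{A}+B)$ is a closed operator on its range. It then suffices to show that its range is dense in $H_N^{\bot}$. For this I would use the adjoint $(\mathcal{A}+B)^{\ast}$ computed in the Appendix: any $\psi^{\ast}\in H_N^{\bot}$ which is $((\cdot,\cdot))_{H_N^{\bot}}$-orthogonal to $\mathrm{Range}(\lambda I-(\mathcal{A}+B))$ belongs to $D((\mathcal{A}+B)^{\ast})$ and satisfies $(\lambda I-(\mathcal{A}+B)^{\ast})\psi^{\ast}=0$; applying the analogue of the dissipativity calculation to the adjoint (which has the same structure with $\mathbf{U}$ effectively reversed) forces $\psi^{\ast}=0$. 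Density of $D(\mathcal{A}+B)\cap H_N^{\bot}$ in $H_N^{\bot}$ is handled by a smooth approximation compatible with the constraint (A.v) that $w_2+\mathbf{U}\cdot\nabla w_1\in H_0^2(\Omega)$, e.g.\ by first approximating $w_1$ and then approximating $w_2$ modulo the $\mathbf{U}\cdot\nabla w_1$ shift.

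The principal obstacle is step (ii): the dissipativity computation requires a delicate matching of boundary terms that are individually above the finite-energy level, and it is only through the precise interplay of the multiplier $\nabla\psi$, the Dirichlet lift $D(g\cdot\nabla w_1)e_3$, and the vector field $h_{\alpha}$ that the uncontrolled $\mathbf{U}$-contributions are absorbed by the viscous and biharmonic dissipation. A secondary difficulty arises in step (iii): one must verify that the adjoint data continue to satisfy the compatibility needed to apply the multiplier $\nabla\psi$ in the dual estimate, so that the same absorption mechanism is available when establishing $\psi^{\ast}=0$. Once (i)--(iii) are in place, the Lumer--Phillips Theorem delivers the contraction semigroup on $(H_N^{\bot},((\cdot,\cdot))_{H_N^{\bot}})$, and norm equivalence promotes this to the claimed uniformly bounded semigroup on $(H_N^{\bot},(\cdot,\cdot)_{\mathcal{H}})$.
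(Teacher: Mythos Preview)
Your proposal follows essentially the same strategy as the paper: Lumer--Phillips on $H_N^\bot$ equipped with the special inner product (\ref{innpro}), dissipativity via the multiplier machinery built into that inner product, and the range condition via triviality of the adjoint null space together with an a~priori lower bound. Two places where you either diverge from or gloss over the paper's argument are worth flagging.

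First, your sentence ``Dissipativity yields both injectivity and the a~priori resolvent bound (\ref{36}), whence $\lambda I-(\mathcal{A}+B)$ is a closed operator on its range'' conflates closed \emph{range} with closedness of the \emph{operator}. The lower bound (\ref{36}) gives closed range only once the operator is already known to be closed, and closedness does not follow from dissipativity alone. The paper isolates this as a separate step (Lemma~\ref{closed}): from the dissipativity estimate (\ref{dissest}) one extracts $H^1(\mathcal{O})$-convergence of the fluid velocities along a graph-norm Cauchy sequence, and then each domain condition (A.i)--(A.vi) is verified in the limit, with the tangential trace condition (A.iv) and the decomposition (A.vi) requiring particular care. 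This is the step your outline skips.

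Second, for $\mathrm{Null}[\lambda I-(\mathcal{A}+B)^\ast]=\{0\}$ you propose to rerun the dissipativity computation ``with $\mathbf{U}$ effectively reversed.'' The paper instead works in the \emph{standard} $\mathcal{H}$-inner product (relations (\ref{1.3})--(\ref{1.5})): it writes out the adjoint eigenvalue PDE system (\ref{1.2}) and applies the multipliers $\nabla\psi(p_0,w_1)$, $w_1$, and a flux multiplier $\gamma\cdot\nabla w_1$ (the latter to control $\int_{\partial\Omega}|\Delta w_1|^2$) directly to those equations. Your route is plausible, but note that the adjoint (\ref{adj-AplusB}) carries genuinely new blocks---the $\mathring{A}^{-1}\{\text{div}[U_1,U_2]+\mathbf{U}\cdot\nabla\}(\cdots)$ terms in $L_2$---so ``same structure with $\mathbf{U}$ reversed'' understates what must be checked; the paper's direct PDE argument sidesteps the need to construct a second special inner product adapted to $(\mathcal{A}+B)^\ast$.
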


\begin{remark}
In point of fact, for ambient field $\mathbf{U}$ smooth enough, the operator 
$(\mathcal{A}+B)$ generates a continuous semigroup in the entire phase space 
$\mathcal{H}$. This conclusion can be straightforwardly obtained by invoking
the machinery of \cite{material}. However, this wellposedness on all of $%
\mathcal{H}$ has its downsides: \textbf{(i)} The ambient field requires the
stronger regularity $\mathbf{H}^{3}(\mathcal{O})$ \textbf{(ii)} the
argumentation in \cite{agw, material}, which partly involves linear
perturbation theory, will culminate in the semigroup of $(\mathcal{A}+B)$
not having a uniform bound; in fact the semigroup estimate on all of $%
\mathcal{H}$ will be of exponential order. 
\end{remark}

To prove Theorem \ref{wp}, we will appeal to Lumer-Phillips Theorem that
requires the analysis of the dissipativity and maximality properties of the
semigroup generator $(\mathcal{A}+B)$. We start with the dissipativity for
which our main tool will be the use of the inner product defined in (\ref%
{innpro}):

\subsection{Dissipativity of the Generator $(\mathcal{A}+B)$}

We show the dissipativity property of the generator operator $(\mathcal{A}%
+B) $ in the following lemma:

\begin{lemma}
\label{diss} With reference to problem (\ref{1})-(\ref{3}), the semigroup
generator $(\mathcal{A}+B):D(\mathcal{A}+B)\cap H_{N}^{\bot }\subset
H_{N}^{\bot }\rightarrow H_{N}^{\bot }$ is dissipative with respect to inner
product $((\cdot ,\cdot ))_{H_{N}^{\bot }}$ for $\left\Vert \mathbf{U}%
\right\Vert _{\ast }$ (defined in (\ref{normU})) small enough. In
particular, for $\varphi =\left[ p_{0},u_{0},w_{1},w_{2}\right] \in D(%
\mathcal{A}+B)\cap H_{N}^{\bot },$ 
\begin{equation}
\text{Re}(([\mathcal{A}+B]\varphi ,\varphi ))_{H_{N}^{\bot }}\leq -\frac{%
(\sigma (u_{0}),\epsilon (u_{0}))_{\mathcal{O}}}{4}-\frac{\eta \left\Vert
u_{0}\right\Vert _{\mathcal{O}}^{2}}{4}-\frac{\xi \left\Vert
p_{0}\right\Vert _{\mathcal{O}}^{2}}{2}-\frac{\xi \left\Vert \Delta
w_{1}\right\Vert _{\Omega }^{2}}{2},  \label{dissest}
\end{equation}%
where $\xi $ is specified in (\ref{33}).
\end{lemma}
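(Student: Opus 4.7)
The plan is to expand $(([\mathcal{A}+B]\varphi,\varphi))_{H_N^\bot}$ by writing the modified inner product (\ref{innpro}) as the sum of the standard pairing $([\mathcal{A}+B]\varphi,\varphi)_{\mathcal{H}}$ and the cross terms generated by the three ``correction'' pieces introduced in (\ref{innpro}): the multiplier $\xi\nabla\psi(p_0,w_1)$ and the Dirichlet lift $-\alpha D(g\cdot\nabla w_1)e_3$ in the velocity slot, and the terms $h_\alpha\cdot\nabla w_1+\xi w_1$ in the plate-velocity slot. Each correction serves a precise purpose. The pair $(\xi\nabla\psi,\xi w_1)$ is engineered to harvest the coercive diagonal dissipation $-\xi\|p_0\|_{\mathcal{O}}^2-\xi\|\Delta w_1\|_{\Omega}^2$. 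The combination $-\alpha D(g\cdot\nabla w_1)e_3$ together with $h_\alpha\cdot\nabla w_1=(\mathbf{U}|_\Omega-\alpha g)\cdot\nabla w_1$, with $\alpha=2\|\mathbf{U}\|_\ast$, is designed to retire the material-derivative term $\mathbf{U}\cdot\nabla w_1$, a contribution strictly above the energy level on the interface.

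First, I treat the baseline piece $([\mathcal{A}+B]\varphi,\varphi)_{\mathcal{H}}$ by routine integration by parts. Using $\mathbf{U}\cdot\mathbf{n}=0$ on $\partial\mathcal{O}$, the convective and feedback contributions collapse into $-\tfrac{1}{2}\int_{\mathcal{O}}\text{div}(\mathbf{U})p_0^{2}+\tfrac{1}{2}\int_{\mathcal{O}}\text{div}(\mathbf{U})|u_0|^{2}$, both bounded by $\|\text{div}\,\mathbf{U}\|_\infty$ times the energy. The divergence pair $(-\text{div}\,u_0,p_0)_{\mathcal{O}}+(-\nabla p_0,u_0)_{\mathcal{O}}$ cancels up to the boundary integral $-\int_{\Omega}p_0(w_2+\mathbf{U}\cdot\nabla w_1)$, after invoking the matching velocity condition on $\Omega$. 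Integration by parts on $(\text{div}\,\sigma(u_0),u_0)_{\mathcal{O}}$, combined with the tangential vanishing condition (A.iv), produces $-(\sigma(u_0),\epsilon(u_0))_{\mathcal{O}}$ and an $\Omega$-boundary integral which exactly pairs with the stress expression carried in the $w_2$-slot of $(\mathcal{A}+B)\varphi$. Repeated Green's identities on $\Omega$, using $w_1=\partial_\nu w_1=0$ on $\partial\Omega$, handle the $-\Delta^2 w_1$ pairing against $w_2$ and against the correction $h_\alpha\cdot\nabla w_1+\xi w_1$.

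Next, I process the correction-induced cross terms. Testing $-\nabla p_0$ against $\xi\nabla\psi(p_0,w_1)$ and invoking the Neumann BVP (\ref{Igormap})---well-posed precisely because $\varphi\in H_{N}^{\bot}$ enforces the compatibility condition (\ref{Igormap1})---yields $-\xi\|p_0\|_{\mathcal{O}}^{2}-\xi\int_{\Omega}p_0 w_1$. Pairing the $w_2$-slot of $(\mathcal{A}+B)\varphi$ against $\xi w_1$ gives $-\xi\|\Delta w_1\|_{\Omega}^{2}+\xi\int_{\Omega}p_0 w_1$ plus stress-trace errors; the two occurrences of $\xi\int_{\Omega}p_0 w_1$ cancel. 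The remaining $\nabla\psi$-pairings against $\text{div}\,\sigma(u_0)$, $-\eta u_0$, and $-\mathbf{U}\cdot\nabla u_0$ are controlled via the elliptic estimate (\ref{psireg}) and Cauchy-Schwarz, contributing only lower-order errors. Pairing the Dirichlet lift $-\alpha D(g\cdot\nabla w_1)e_3$ against $V=-\nabla p_0+\text{div}\,\sigma(u_0)-\eta u_0-\mathbf{U}\cdot\nabla u_0$, together with the $h_\alpha$-pairing in the $w_2$-slot (which also pulls in the material-derivative contribution from $W_1=w_2+\mathbf{U}\cdot\nabla w_1$), retires the above-energy boundary term $-\int_{\Omega}p_0\,\mathbf{U}\cdot\nabla w_1$ left over from the baseline step and produces a coercive surface contribution of order $\alpha\|g\cdot\nabla w_1\|_{\Omega}^{2}$; the choice $\alpha=2\|\mathbf{U}\|_\ast$ ensures that this term dominates the symmetric error generated by $\mathbf{U}|_\Omega\cdot\nabla w_1$.

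Collecting everything, the residue is a quadratic form in $(p_0,u_0,\nabla w_1,w_2)$ whose cross coefficients are $O(\|\mathbf{U}\|_\ast)$ plus boundary traces controlled by the Sobolev trace theorem and the $H^{3/2}$-elliptic regularity afforded by Condition~\ref{cond}. An application of Young's inequality, together with the specification of $\xi$ in (\ref{33}), absorbs half of each of the good terms $(\sigma(u_0),\epsilon(u_0))_{\mathcal{O}}$, $\eta\|u_0\|_{\mathcal{O}}^{2}$, $\xi\|p_0\|_{\mathcal{O}}^{2}$, $\xi\|\Delta w_1\|_{\Omega}^{2}$ into the error; for $\|\mathbf{U}\|_\ast$ sufficiently small the estimate (\ref{dissest}) follows. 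I expect the main obstacle to be the systematic cancellation of the above-energy interface term $\int_{\Omega}p_0\,\mathbf{U}\cdot\nabla w_1$: verifying that the precise interplay between the multiplier $\nabla\psi$, the lift $D(g\cdot\nabla w_1)e_3$, and the drift field $h_\alpha$ removes it, and that no surviving boundary term escapes the energy class, is the technical core of the argument and the place where the specific definitions in (\ref{innpro})--(\ref{normU}) must be matched term by term.
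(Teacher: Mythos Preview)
Your overall architecture is right and matches the paper's: expand the special inner product, isolate the baseline dissipation $-(\sigma(u_0),\epsilon(u_0))_{\mathcal O}-\eta\|u_0\|^2_{\mathcal O}$, use the pair $(\xi\nabla\psi,\xi w_1)$ to generate $-\xi\|p_0\|^2_{\mathcal O}-\xi\|\Delta w_1\|^2_{\Omega}$, and absorb the rest by smallness of $\|\mathbf U\|_\ast$ together with the choice of $\xi$ in (\ref{33}). The cancellation of $\int_\Omega p_0\,\mathbf U\cdot\nabla w_1$ that you flag as the ``technical core'' is in fact immediate: since $h_\alpha=\mathbf U|_\Omega-\alpha g$, the $\mathbf U$-part of the plate-velocity correction cancels it algebraically, and the residual $-\alpha g\cdot\nabla w_1$ piece is exactly what the Dirichlet lift $\alpha D(g\cdot\nabla w_1)e_3$ converts into a benign bulk integral (the paper's term $I_1$, estimated by $r(\|\mathbf U\|_\ast)$ times energy). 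There is no ``coercive surface contribution of order $\alpha\|g\cdot\nabla w_1\|_\Omega^2$''; that term is simply small.

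The genuine gap is elsewhere. You write that ``repeated Green's identities on $\Omega$, using $w_1=\partial_\nu w_1=0$, handle the $-\Delta^2 w_1$ pairing against $\ldots$ $h_\alpha\cdot\nabla w_1$.'' They do not. The flux-multiplier identity gives
\[
-\text{Re}(\Delta^2 w_1,\,h_\alpha\cdot\nabla w_1)_\Omega
=\tfrac12\int_{\partial\Omega}(h_\alpha\cdot\nu)|\Delta w_1|^2\,d\partial\Omega
+\text{(commutator terms of order }\|\mathbf U\|_\ast\|\Delta w_1\|_\Omega^2\text{)},
\]
and likewise $(\Delta(\mathbf U\cdot\nabla w_1),\Delta w_1)_\Omega$ from the $B$-action produces another $\tfrac12\int_{\partial\Omega}(\mathbf U\cdot\nu)|\Delta w_1|^2$. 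The trace $\Delta w_1|_{\partial\Omega}$ is \emph{not} controlled by the energy $\|\Delta w_1\|_{L^2(\Omega)}$ via Sobolev trace; this is strictly above level. The whole point of taking $g$ to extend the outward normal $\nu$ and setting $\alpha=2\|\mathbf U\|_\ast$ is that the combined boundary contribution becomes
\[
\int_{\partial\Omega}\Big[\mathbf U\cdot\nu-\tfrac{\alpha}{2}\Big]\,|\Delta w_1|^2\,d\partial\Omega\le 0,
\]
so it can be dropped with the correct sign (this is the paper's $I_6$). Your proposal neither identifies this $\partial\Omega$ trace nor explains why it has a sign, so as written the argument would fail at exactly this step. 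Once you insert the flux-multiplier/commutator computation and use the sign of $\mathbf U\cdot\nu-\alpha/2$, the remainder of your outline goes through as in the paper.
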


\begin{proof}
Given $\varphi =\left[ p_{0},u_{0},w_{1},w_{2}\right] \in D(\mathcal{A}%
+B)\cap H_{N}^{\bot },$ we have 
\begin{equation*}
(([\mathcal{A}+B]\varphi ,\varphi ))_{H_{N}^{\bot }}=(-\mathbf{U}\nabla
p_{0}-\text{div}(u_{0})-\text{div}(\mathbf{U})p_{0},p_{0})_{\mathcal{O}}
\end{equation*}%
\begin{equation*}
+(-\nabla p_{0}+\text{div}\sigma (u_{0})-\eta u_{0}-\mathbf{U}\nabla
u_{0},u_{0}-\alpha D(g\cdot \nabla w_{1})e_{3})_{\mathcal{O}}
\end{equation*}%
\begin{equation*}
+(-\nabla p_{0}+\text{div}\sigma (u_{0})-\eta u_{0}-\mathbf{U}\nabla
u_{0},\xi \nabla \psi (p_{0},w_{1}))_{\mathcal{O}}
\end{equation*}%
\begin{equation*}
-\alpha (D(g\cdot \nabla \lbrack w_{2}+\mathbf{U}\nabla
w_{1}])e_{3},u_{0}-\alpha D(g\cdot \nabla w_{1})e_{3}+\xi \nabla \psi
(p_{0},w_{1}))_{\mathcal{O}}
\end{equation*}%
\begin{equation*}
+\xi (\nabla \psi (-\mathbf{U}\nabla p_{0}-\text{div}(u_{0})-\text{div}(%
\mathbf{U})p_{0},w_{2}+\mathbf{U}\nabla w_{1}),u_{0}-\alpha D(g\cdot \nabla
w_{1})e_{3})_{\mathcal{O}}
\end{equation*}%
\begin{equation*}
+\xi ^{2}(\nabla \psi (-\mathbf{U}\nabla p_{0}-\text{div}(u_{0})-\text{div}(%
\mathbf{U})p_{0},w_{2}+\mathbf{U}\nabla w_{1}),\nabla \psi (p_{0},w_{1}))_{%
\mathcal{O}}
\end{equation*}%
\begin{equation*}
+(\Delta w_{2},\Delta w_{1})_{\Omega }+(\Delta (\mathbf{U}\nabla
w_{1}),\Delta w_{1})_{\Omega }
\end{equation*}%
\begin{equation*}
+(p_{0}|_{\Omega }-\left[ 2\nu \partial _{x_{3}}(u_{0})_{3}+\lambda \text{div%
}(u_{0})\right] |_{\Omega },w_{2}+h_{\alpha }\cdot \nabla w_{1}+\xi
w_{1})_{\Omega }
\end{equation*}%
\begin{equation*}
+(h_{\alpha }\cdot \nabla \lbrack w_{2}+\mathbf{U}\nabla
w_{1}],w_{2}+h_{\alpha }\cdot \nabla w_{1}+\xi w_{1})_{\Omega }
\end{equation*}%
\begin{equation*}
-(\Delta ^{2}w_{1},w_{2}+h_{\alpha }\cdot \nabla w_{1}+\xi w_{1})_{\Omega }
\end{equation*}%
\begin{equation*}
+\xi (w_{2}+\mathbf{U}\nabla w_{1},w_{2}+h_{\alpha }\cdot \nabla w_{1}+\xi
w_{1})_{\Omega }.
\end{equation*}%
After integration by parts we then arrive at%
\begin{equation*}
(([\mathcal{A}+B]\varphi ,\varphi ))_{H_{N}^{\bot }}=-(\sigma
(u_{0}),\epsilon (u_{0}))_{\mathcal{O}}-\eta \left\Vert u_{0}\right\Vert _{%
\mathcal{O}}^{2}+\frac{1}{2}\int\limits_{\mathcal{O}}\text{div}(\mathbf{U}%
)[|u_{0}|^{2}-|p_{0}|^{2}]d\mathcal{O}
\end{equation*}%
\begin{equation*}
+2i\text{Im}[(p_{0},\text{div}(u_{0}))_{\mathcal{O}}+(\Delta w_{2},\Delta
w_{1})_{\Omega }]-i\text{Im}[(\mathbf{U}\nabla p_{0},p_{0})_{\mathcal{O}}+(%
\mathbf{U}\nabla u_{0},u_{0})_{\mathcal{O}}]
\end{equation*}%
\begin{equation}
+\sum\limits_{j=1}^{8}I_{j},  \label{est}
\end{equation}%
where above the $I_{j}$ are given by:%
\begin{equation*}
I_{1}=(\nabla p_{0}-\text{div}\sigma (u_{0})+\eta u_{0}+\mathbf{U}\nabla
u_{0},\alpha D(g\cdot \nabla w_{1})e_{3})_{\mathcal{O}}
\end{equation*}%
\begin{equation}
-\alpha (p_{0}|_{\Omega }-\left[ 2\nu \partial _{x_{3}}(u_{0})_{3}+\lambda 
\text{div}(u_{0})\right] |_{\Omega },g\cdot \nabla w_{1})_{\Omega },
\label{I1}
\end{equation}%
\begin{equation*}
I_{2}=(-\nabla p_{0}+\text{div}\sigma (u_{0})-\eta u_{0}-\mathbf{U}\nabla
u_{0},\xi \nabla \psi (p_{0},w_{1}))_{\mathcal{O}}-\xi (\Delta
^{2}w_{1},w_{1})_{\Omega }
\end{equation*}%
\begin{equation}
+(p_{0}|_{\Omega }-\left[ 2\nu \partial _{x_{3}}(u_{0})_{3}+\lambda \text{div%
}(u_{0})\right] |_{\Omega },\xi w_{1})_{\Omega },  \label{I2}
\end{equation}%
\begin{equation}
I_{3}=-\alpha (D(g\cdot \nabla \lbrack w_{2}+\mathbf{U}\nabla
w_{1}])e_{3},u_{0}-\alpha D(g\cdot \nabla w_{1})e_{3}+\xi \nabla \psi
(p_{0},w_{1}))_{\mathcal{O}},  \label{I3}
\end{equation}%
\begin{equation}
I_{4}=\xi (\nabla \psi (-\mathbf{U}\nabla p_{0}-\text{div}(u_{0})-\text{div}(%
\mathbf{U})p_{0},w_{2}+\mathbf{U}\nabla w_{1}),u_{0}-\alpha D(g\cdot \nabla
w_{1})e_{3})_{\mathcal{O}},  \label{I4}
\end{equation}%
\begin{equation}
I_{5}=\xi ^{2}(\nabla \psi (-\mathbf{U}\nabla p_{0}-\text{div}(u_{0})-\text{%
div}(\mathbf{U})p_{0},w_{2}+\mathbf{U}\nabla w_{1}),\nabla \psi
(p_{0},w_{1}))_{\mathcal{O}},  \label{I5}
\end{equation}%
\begin{equation}
I_{6}=(\Delta (\mathbf{U}\nabla w_{1}),\Delta w_{1})_{\Omega }-(\Delta
^{2}w_{1},h_{\alpha }\cdot \nabla w_{1})_{\Omega },  \label{I6}
\end{equation}%
\begin{equation}
I_{7}=(h_{\alpha }\cdot \nabla \lbrack w_{2}+\mathbf{U}\nabla
w_{1}],w_{2})_{\Omega },  \label{I7}
\end{equation}%
\begin{equation*}
I_{8}=(h_{\alpha }\cdot \nabla \lbrack w_{2}+\mathbf{U}\nabla
w_{1}],h_{\alpha }\cdot \nabla w_{1}+\xi w_{1})_{\Omega }
\end{equation*}%
\begin{equation}
+\xi (w_{2}+\mathbf{U}\nabla w_{1},w_{2}+h_{\alpha }\cdot \nabla w_{1}+\xi
w_{1})_{\Omega }.  \label{I8}
\end{equation}%
where we also recall the definition $h_{\alpha}=\mathbf{U}|_{\Omega}-\alpha
g.$ In the course of estimating the terms (\ref{I1})-(\ref{I8}) above, we
will invoke the polynomial 
\begin{equation}
r(a)=a+a^{2}+a^{3}.  \label{poly}
\end{equation}%
We start with $I_{1};$ integrating by parts, we have%
\begin{equation*}
I_{1}=-\alpha (p_{0},\text{div}[D(g\cdot \nabla w_{1})e_{3}])_{\mathcal{O}%
}+\alpha (\sigma (u_{0}),\epsilon (D(g\cdot \nabla w_{1})e_{3})_{\mathcal{O}}
\end{equation*}%
\begin{equation}
+\alpha \eta (u_{0},D(g\cdot \nabla w_{1})e_{3})_{\mathcal{O}}+\alpha (%
\mathbf{U}\nabla u_{0},D(g\cdot \nabla w_{1})e_{3})_{\mathcal{O}}
\label{I1-1}
\end{equation}%
Using the fact that Dirichlet map $D\in L(H_{0}^{\frac{1}{2}+\epsilon
}(\Omega ),H^{1}(\mathcal{O}))$, we have%
\begin{equation}
I_{1}\leq r(\left\Vert \mathbf{U}\right\Vert _{\ast })C\left\{ \left\Vert
u_{0}\right\Vert _{H^{1}(\mathcal{O})}^{2}+\left\Vert p_{0}\right\Vert _{%
\mathcal{O}}^{2}+\left\Vert \Delta w_{1}\right\Vert _{\Omega }^{2}\right\}
\label{A}
\end{equation}%
%
%
%
We continue with $I_{2};$ using the definition of the map $\psi (\cdot
,\cdot )$ in (\ref{Igormap}) and integrating by parts we get%
\begin{equation*}
I_{2}=-\xi \int\limits_{\mathcal{O}}\left\vert p_{0}\right\vert ^{2}d%
\mathcal{O}-\xi (\sigma (u_{0}),\epsilon (\nabla \psi (p_{0},w_{1})))_{%
\mathcal{O}}
\end{equation*}%
\begin{equation*}
+\xi \left\langle \sigma (u_{0})n-p_{0}n,(\nabla \psi
(p_{0},w_{1}),n)n\right\rangle _{\partial \mathcal{O}}-\eta (u_{0},\xi
\nabla \psi (p_{0},w_{1}))_{\mathcal{O}}
\end{equation*}%
\begin{equation*}
(-\mathbf{U}\nabla u_{0},\xi \nabla \psi (p_{0},w_{1}))_{\mathcal{O}%
}-(\Delta ^{2}w_{1},\xi w_{1})_{\Omega }
\end{equation*}%
\begin{equation*}
+(p_{0}|_{\Omega }-\left[ 2\nu \partial _{x_{3}}(u_{0})_{3}+\lambda \text{div%
}(u_{0})\right] |_{\Omega },\xi w_{1})_{\Omega },
\end{equation*}%
whence we obtain%
\begin{equation*}
I_{2}\leq -\xi \left\Vert p_{0}\right\Vert _{\mathcal{O}}^{2}-\xi \left\Vert
\Delta w_{1}\right\Vert _{\Omega }^{2}+\xi r(\left\Vert \mathbf{U}%
\right\Vert _{\ast })C\left\{ \left\Vert u_{0}\right\Vert _{H^{1}(\mathcal{O}%
)}^{2}+\left\Vert p_{0}\right\Vert _{\mathcal{O}}^{2}+\left\Vert \Delta
w_{1}\right\Vert _{\Omega }^{2}\right\}
\end{equation*}%
\begin{equation}
+\xi C\left\{ \left\Vert u_{0}\right\Vert _{H^{1}(\mathcal{O})}\left[
\left\Vert p_{0}\right\Vert _{\mathcal{O}}+\left\Vert \Delta
w_{1}\right\Vert _{\Omega }\right] \right\} .  \label{B}
\end{equation}%
For $I_{3}:$ recalling the boundary condition 
\begin{equation*}
(u_{0})_{3}|_{\Omega }=w_{2}+\mathbf{U}\nabla w_{1},
\end{equation*}%
making use of Lemma 6.1 of \cite{material} and considering the assumptions
made on the geometry in Condition \ref{cond}, we have%
\begin{equation*}
I_{3}\leq \alpha C\left\Vert g\cdot \nabla (u_{0})_{3}\right\Vert _{H^{-%
\frac{1}{2}}(\Omega )}\left\Vert u_{0}-\alpha D(g\cdot \nabla
w_{1})e_{3}+\xi \nabla \psi (p_{0},w_{1})\right\Vert _{\mathcal{O}}
\end{equation*}%
\begin{equation}
\leq C\left[ r(\left\Vert \mathbf{U}\right\Vert _{\ast })\left\{ \left\Vert
u_{0}\right\Vert _{H^{1}(\mathcal{O})}^{2}+\left\Vert \Delta
w_{1}\right\Vert _{\Omega }^{2}\right\} +\xi ^{2}\left\{ \left\Vert
p_{0}\right\Vert _{\mathcal{O}}^{2}+\left\Vert \Delta w_{1}\right\Vert
_{\Omega }^{2}\right\} \right]  \label{C}
\end{equation}%
where we have also implicitly used the Sobolev Embedding Theorem. To
continue with $I_{4}:$%
\begin{equation*}
I_{4}=\xi (\nabla \psi (-\mathbf{U}\nabla p_{0}-\text{div}(\mathbf{U}%
)p_{0},0),u_{0}-\alpha D(g\cdot \nabla w_{1})e_{3})_{\mathcal{O}}
\end{equation*}%
\begin{equation*}
+\xi (\nabla \psi (-\text{div}(u_{0}),u_{0}\cdot \mathbf{n}),u_{0}-\alpha
D(g\cdot \nabla w_{1})e_{3})_{\mathcal{O}}
\end{equation*}%
\begin{equation}
=I_{4a}+I_{4b}  \label{D1}
\end{equation}%
Since $\mathbf{U}\cdot \mathbf{n|}_{\partial \mathcal{O}}\mathbf{=0,}$ we
have that $(\mathbf{U}\nabla p_{0}+$div$(\mathbf{U})p_{0})\in \lbrack H^{1}(%
\mathcal{O})]^{^{\prime }}$ with%
\begin{equation}
\left\Vert \mathbf{U}\nabla p_{0}+\text{div}(\mathbf{U})p_{0}\right\Vert
_{[H^{1}(\mathcal{O})]^{^{\prime }}}\leq C\left\Vert \mathbf{U}\right\Vert
_{\ast }\left\Vert p_{0}\right\Vert _{\mathcal{O}}.  \label{D*}
\end{equation}%
By Lax-Milgram Theorem, we then have%
\begin{equation*}
I_{4a}\leq C\xi \left\Vert \nabla \psi (-\mathbf{U}\nabla p_{0}-\text{div}(%
\mathbf{U})p_{0},0)\right\Vert _{\mathcal{O}}\left\Vert u_{0}-\alpha
D(g\cdot \nabla w_{1})e_{3}\right\Vert _{\mathcal{O}}
\end{equation*}%
\begin{equation}
\leq C\xi r(\left\Vert \mathbf{U}\right\Vert _{\ast })\left\{ \left\Vert
u_{0}\right\Vert _{H^{1}(\mathcal{O})}^{2}+\left\Vert p_{0}\right\Vert _{%
\mathcal{O}}^{2}+\left\Vert \Delta w_{1}\right\Vert _{\Omega }^{2}\right\}
\label{D2}
\end{equation}%
and similarly%
\begin{equation}
I_{4b}\leq C\xi r(\left\Vert \mathbf{U}\right\Vert _{\ast })\left\{
\left\Vert u_{0}\right\Vert _{H^{1}(\mathcal{O})}^{2}+\left\Vert \Delta
w_{1}\right\Vert _{\Omega }^{2}\right\} .  \label{D3}
\end{equation}%
Now, applying (\ref{D2})-(\ref{D3}) to (\ref{D1}) gives%
\begin{equation}
I_{4}\leq C\xi r(\left\Vert \mathbf{U}\right\Vert _{\ast })\left\{
\left\Vert u_{0}\right\Vert _{H^{1}(\mathcal{O})}^{2}+\left\Vert
p_{0}\right\Vert _{\mathcal{O}}^{2}+\left\Vert \Delta w_{1}\right\Vert
_{\Omega }^{2}\right\} .  \label{D}
\end{equation}%
Estimating $I_{5}:$ we proceed as before done for $I_{4}$ and invoke (\ref%
{D*}), Lax Milgram Theorem and the estimate (\ref{psireg}) to have%
\begin{equation}
I_{5}\leq C\xi ^{2}\left[ \left\Vert \mathbf{U}\right\Vert _{\ast }\left\{
\left\Vert p_{0}\right\Vert _{\mathcal{O}}^{2}+\left\Vert \Delta
w_{1}\right\Vert _{\Omega }^{2}\right\} +\left\Vert u_{0}\right\Vert _{H^{1}(%
\mathcal{O})}^{2}\right]  \label{E}
\end{equation}%
For $I_{6},$ in order to estimate the second term in (\ref{I6}), we follow
the standard calculations used for the flux multipliers and the commutator
symbol given by%
\begin{equation}
\lbrack P,Q]f=P(Qf)-Q(Pf)  \label{F*}
\end{equation}%
for the differential operators $P$ and $Q$. Hence,%
\begin{align}
-(\Delta ^{2}w_{1},h_{\alpha}\cdot \nabla w_{1})_{\Omega }=& ~(\nabla \Delta
w_{1},\nabla (h_{\alpha}\cdot \nabla w_{1}))_{\Omega } \\
=& ~-(\Delta w_{1},\Delta (h_{\alpha}\cdot \nabla w_{1}))_{\Omega
}+\int_{\partial \Omega }(h_{\alpha}\cdot \mathbf{\nu })|\Delta
w_{1}|^{2}d\partial \Omega ,
\end{align}%
where, in the first identity we have directly invoked the clamped plate
boundary conditions, and in the second we have used the fact that $%
w_{1}=\partial _{\mathbf{\nu }}w_{1}=0$ on $\partial \Omega $ which yields
that 
\begin{equation*}
\frac{\partial }{\partial \mathbf{\nu }}(h_{\alpha}\cdot \nabla
w_{1})=(h_{\alpha}\cdot \mathbf{\nu })\frac{\partial ^{2}w_{1}}{\partial 
\mathbf{\nu }}=(h_{\alpha}\cdot \mathbf{\nu })(\Delta w_{1}\big|_{\partial
\Omega }).
\end{equation*}
\noindent (See \cite{lagnese} or \cite[p.305]{LT}). Using the commutator
bracket $[\cdot ,\cdot ]$, we can rewrite the latter relation as 
\begin{equation*}
-(\Delta ^{2}w_{1},h_{\alpha}\cdot \nabla w_{1})_{\Omega }=~-(\Delta
w_{1},[\Delta ,h_{\alpha}\cdot \nabla ]w_{1})_{\Omega }-(\Delta
w_{1},h_{\alpha}\cdot \nabla (\Delta w_{1}))_{\Omega }+\int_{\partial \Omega
}(h_{\alpha}\cdot \mathbf{\nu })|\Delta w_{1}|^{2}d\partial \Omega .
\end{equation*}%
With Green's relations once more: 
\begin{align}
-(\Delta ^{2}w_{1},h_{\alpha}\cdot \nabla w_{1})_{\Omega }=& ~-(\Delta
w_{1},[\Delta ,h_{\alpha}\cdot \nabla ]w_{1})_{\Omega }-\frac{1}{2}%
\int_{\partial \Omega }(h_{\alpha}\cdot \mathbf{\nu })|\Delta
w_{1}|^{2}d\partial \Omega  \notag \\
& +\frac{1}{2}\int_{\Omega }\big[\text{div}(h_{\alpha})\big]|\Delta
w_{1}|^{2}d\Omega -i\text{Im}(\Delta w_{1},h_{\alpha}\cdot \nabla (\Delta
w_{1}))_{\Omega }  \notag \\
& +\int_{\partial \Omega }(h_{\alpha}\cdot \mathbf{\nu })|\Delta
w_{1}|^{2}d\partial \Omega .  \label{use1}
\end{align}%
Thus, 
\begin{align}
-(\Delta ^{2}w_{1},h_{\alpha}\cdot \nabla w_{1})_{\Omega }=& ~-(\Delta
w_{1},[\Delta ,h_{\alpha}\cdot \nabla ]w_{1})_{\Omega }+\frac{1}{2}%
\int_{\partial \Omega }(h_{\alpha}\cdot \mathbf{\nu })|\Delta
w_{1}|^{2}d\partial \Omega  \notag \\
& +\frac{1}{2}\int_{\Omega }\big[\text{div}(h_{\alpha})\big]|\Delta
w_{1}|^{2}d\Omega -i\text{Im}(\Delta w_{1},h_{\alpha}\cdot \nabla (\Delta
w_{1})).  \label{use2}
\end{align}%
Since $h_{\alpha}=\mathbf{U}\big|_{\Omega }-\alpha g$, where $g$ is an
extension of $\mathbf{\nu }(\mathbf{x})$, we will have then%
\begin{equation}
-\text{Re}(\Delta ^{2}w_{1},h_{\alpha}\cdot \nabla w_{1})_{\Omega }=~\dfrac{1%
}{2}\int_{\partial \Omega }(\mathbf{U}\cdot \mathbf{\nu }-\alpha )|\Delta
w_{1}|^{2}d\partial \Omega +\dfrac{1}{2}\int_{\Omega }\text{div}%
(h_{\alpha})|\Delta w_{1}|^{2}d\Omega -\text{Re}(\Delta w_{1},[\Delta
,h_{\alpha}\cdot \nabla ]w_{1})_{\Omega }  \label{com1}
\end{equation}%
Since we can explicitly compute the commutator 
\begin{align*}
\lbrack \Delta ,{h_{\alpha}}\cdot \nabla ]w_{1}=& (\Delta h_{1})(\partial
_{x_{1}}w_{1})+2(\partial _{x_{1}}h_{1})(\partial
_{x_{1}}^{2}w_{1})+2(\partial _{x_{2}}h_{2})(\partial
_{x_{2}}^{2}w_{1})+(\Delta h_{2})(\partial _{x_{2}}w_{1}) \\
& +2\text{div}(h_{\alpha})(\partial _{x_{1}}\partial _{x_{2}}w_{1}),
\end{align*}%
and 
\begin{equation}
\big|\big|\lbrack \Delta ,{h_{\alpha}}\cdot \nabla ]w_{1}\big|\big|%
_{L^{2}(\Omega )}\leq Cr(\left\Vert \mathbf{U}\right\Vert _{\ast })||\Delta
w_{1}||_{L^{2}(\Omega )}.  \label{commest}
\end{equation}%
combining (\ref{com1})-(\ref{commest}) we eventually get%
\begin{equation}
-\text{Re}(\Delta ^{2}w_{1},h_{\alpha }\cdot \nabla w_{1})_{\Omega }\leq 
\frac{1}{2}\int\limits_{\partial \Omega }[\mathbf{U\cdot \nu -}\alpha
]\left\vert \Delta w_{1}\right\vert ^{2}d\partial \Omega +Cr(\left\Vert 
\mathbf{U}\right\Vert _{\ast })\left\Vert \Delta w_{1}\right\Vert _{\Omega
}^{2}.  \label{F1}
\end{equation}%
Moreover, for the first term of (\ref{I6}), we have%
\begin{equation*}
(\Delta (\mathbf{U}\nabla w_{1}),\Delta w_{1})_{\Omega }=(\mathbf{U}\nabla
w_{1}),\Delta w_{1})_{\Omega }-([\mathbf{U\cdot }\nabla ,\Delta
]w_{1},\Delta w_{1})_{\Omega }
\end{equation*}%
\begin{equation*}
=\int\limits_{\partial \Omega }(\mathbf{U\cdot \nu })\left\vert \Delta
w_{1}\right\vert ^{2}d\partial \Omega -\int\limits_{\partial \Omega }\text{%
div}(\mathbf{U)}\left\vert \Delta w_{1}\right\vert ^{2}d\partial \Omega
\end{equation*}%
\begin{equation*}
-([\mathbf{U\cdot }\nabla ,\Delta ]w_{1},\Delta w_{1})_{\Omega
}-\int\limits_{\Omega }\Delta w_{1}\mathbf{U\cdot }\nabla (\Delta
w_{1})d\Omega
\end{equation*}%
where we also use the commutator expression in (\ref{F*}). This gives us%
\begin{equation}
\text{Re}(\Delta (\mathbf{U}\nabla w_{1}),\Delta w_{1})_{\Omega }\leq \frac{1%
}{2}\int\limits_{\partial \Omega }(\mathbf{U\cdot \nu })\left\vert \Delta
w_{1}\right\vert ^{2}d\partial \Omega +Cr(\left\Vert \mathbf{U}\right\Vert
_{\ast })\left\Vert \Delta w_{1}\right\Vert _{\Omega }^{2}.  \label{F2}
\end{equation}%
Now applying (\ref{F1})-(\ref{F2}) to (\ref{I6}), we obtain%
\begin{equation}
\text{Re}I_{6}\leq \int\limits_{\partial \Omega }[\mathbf{U\cdot \nu -}\frac{%
\alpha }{2}]\left\vert \Delta w_{1}\right\vert ^{2}d\partial \Omega
+Cr(\left\Vert \mathbf{U}\right\Vert _{\ast })\left\Vert \Delta
w_{1}\right\Vert _{\Omega }^{2}.  \tag{F}
\end{equation}%
To estimate $I_{7}:$ since $w_{2}\in H_{0}^{1}(\Omega ),$ we have 
\begin{equation*}
\text{Re}(h_{\alpha }\cdot \nabla w_{2},w_{2})_{\Omega }=-\frac{1}{2}%
\int\limits_{\Omega }\text{div}(h_{\alpha }\mathbf{)}\left\vert
w_{2}\right\vert ^{2}d\Omega
\end{equation*}%
\begin{equation*}
=-\frac{1}{2}\int\limits_{\Omega }\text{div}(h_{\alpha }\mathbf{)}\left\vert
(u_{0})_{3}-\mathbf{U}\nabla w_{1}\right\vert ^{2}d\Omega
\end{equation*}%
after using the boundary condition in $\mathbf{(A.v)}.$ Applying the last
relation to RHS of (\ref{I7}) and recalling that $h_{\alpha }=\mathbf{U|}%
_{\Omega }-\alpha g,$ we get%
\begin{equation*}
\text{Re}I_{7}=\text{Re}(h_{\alpha }\cdot \nabla w_{2},w_{2})_{\Omega }+%
\text{Re}(h_{\alpha }\cdot \nabla (\mathbf{U}\nabla w_{1}),(u_{0})_{3}-%
\mathbf{U}\nabla w_{1})_{\mathcal{O}}
\end{equation*}%
\begin{equation}
\leq Cr(\left\Vert \mathbf{U}\right\Vert _{\ast })\left\{ \left\Vert
u_{0}\right\Vert _{H^{1}(\mathcal{O})}^{2}+\left\Vert \Delta
w_{1}\right\Vert _{\Omega }^{2}\right\}  \label{G}
\end{equation}%
where we also implicitly use Sobolev Trace Theorem. Lastly, for the term $%
I_{8}$, we proceed in a manner similar to that adopted for $I_{7}$ and we
have%
\begin{equation*}
I_{8}=(h_{\alpha }\cdot \nabla (u_{0})_{3},h_{\alpha }\cdot \nabla w_{1}+\xi
w_{1})_{\Omega }
\end{equation*}%
\begin{equation*}
+\xi ((u_{0})_{3},(u_{0})_{3}-\mathbf{U}\cdot \nabla w_{1}+h_{\alpha }\cdot
\nabla w_{1}+\xi w_{1})_{\Omega }
\end{equation*}%
\begin{equation*}
\leq C\left[ r(\left\Vert \mathbf{U}\right\Vert _{\ast })+\xi ^{2}\right]
\left\{ \left\Vert u_{0}\right\Vert _{H^{1}(\mathcal{O})}^{2}+\left\Vert
\Delta w_{1}\right\Vert _{\Omega }^{2}\right\}
\end{equation*}%
\begin{equation}
+C\xi \left[ \left\Vert u_{0}\right\Vert _{H^{1}(\mathcal{O}%
)}^{2}+r(\left\Vert \mathbf{U}\right\Vert _{\ast })\left\{ \left\Vert
u_{0}\right\Vert _{H^{1}(\mathcal{O})}^{2}+\left\Vert \Delta
w_{1}\right\Vert _{\Omega }^{2}\right\} \right]  \label{H}
\end{equation}%
Now, if we apply (\ref{A})-(\ref{H}) to RHS of (\ref{est}), we obtain%
\begin{equation*}
\text{Re}(([\mathcal{A}+B]\varphi ,\varphi ))_{H_N^{\bot}}\leq -(\sigma
(u_{0}),\epsilon (u_{0}))_{\mathcal{O}}-\eta \left\Vert u_{0}\right\Vert _{%
\mathcal{O}}^{2}-\xi \left\Vert p_{0}\right\Vert _{\mathcal{O}}^{2}-\xi
\left\Vert \Delta w_{1}\right\Vert _{\Omega }^{2}
\end{equation*}%
\begin{equation*}
+\int\limits_{\partial \Omega }[\mathbf{U\cdot \nu -}\frac{\alpha }{2}%
]\left\vert \Delta w_{1}\right\vert ^{2}d\partial \Omega
\end{equation*}%
\begin{equation*}
+C\left[ r_{\mathbf{U}}+\xi r_{\mathbf{U}}+\xi ^{2}+\xi \right] \left\Vert
u_{0}\right\Vert _{H^{1}(\mathcal{O})}^{2}
\end{equation*}%
\begin{equation*}
+C\left[ r_{\mathbf{U}}+\xi r_{\mathbf{U}}+\xi ^{2}+\xi ^{2}r_{\mathbf{U}}%
\right] \left\{ \left\Vert p_{0}\right\Vert _{\mathcal{O}}^{2}+\left\Vert
\Delta w_{1}\right\Vert _{\Omega }^{2}\right\}
\end{equation*}%
\begin{equation}
+C\xi \left\Vert u_{0}\right\Vert _{H^{1}(\mathcal{O})}^{2}\left\{
\left\Vert p_{0}\right\Vert _{\mathcal{O}}+\left\Vert \Delta
w_{1}\right\Vert _{\Omega }\right\}  \label{31}
\end{equation}%
where, for the simplicity, we have set $r_{\mathbf{U}}=$ $r(\left\Vert 
\mathbf{U}\right\Vert _{\ast }).$ We recall now the value of $\alpha
=2\left\Vert \mathbf{U}\right\Vert _{\ast }$ to get%
\begin{equation*}
\text{Re}(([\mathcal{A}+B]\varphi ,\varphi ))_{H_N^{\bot}}\leq -(\sigma
(u_{0}),\epsilon (u_{0}))_{\mathcal{O}}-\eta \left\Vert u_{0}\right\Vert _{%
\mathcal{O}}^{2}-\xi \left\Vert p_{0}\right\Vert _{\mathcal{O}}^{2}-\xi
\left\Vert \Delta w_{1}\right\Vert _{\Omega }^{2}
\end{equation*}%
\begin{equation*}
+\left[ (C_{1}+C_{2}r_{\mathbf{U}})\xi ^{2}+C_{2}r_{\mathbf{U}}\xi +C_{2}r_{%
\mathbf{U}}\right] \left\{ \left\Vert p_{0}\right\Vert _{\mathcal{O}%
}^{2}+\left\Vert \Delta w_{1}\right\Vert _{\Omega }^{2}\right\}
\end{equation*}%
\begin{equation*}
+\frac{1}{2}\left\{ (\sigma (u_{0}),\epsilon (u_{0}))_{\mathcal{O}}+\eta
\left\Vert u_{0}\right\Vert _{\mathcal{O}}^{2}\right\}
\end{equation*}%
\begin{equation}
+C_{3}\left[ r_{\mathbf{U}}+\xi r_{\mathbf{U}}+\xi ^{2}+\xi \right]
\left\Vert u_{0}\right\Vert _{H^{1}(\mathcal{O})}^{2}  \label{32}
\end{equation}%
where the positive constants $C_{1},C_{2}$ and $C_{3}$ are obtained with the
application of Holder-Young and Korn's inequalities and $C_{2}$ depends on
the constant in Korn's inequality. We now specify $\xi $ be a zero of the
equation%
\begin{equation*}
(C_{1}+C_{2}r_{\mathbf{U}})\xi ^{2}+(C_{2}r_{\mathbf{U}}-\frac{1}{2})\xi
+C_{2}r_{\mathbf{U}}=0.
\end{equation*}%
Namely, 
\begin{equation}
\xi =\frac{\frac{1}{2}-C_{2}r_{\mathbf{U}}}{2(C_{1}+C_{2}r_{\mathbf{U}})}-%
\frac{\sqrt{(\frac{1}{2}-C_{2}r_{\mathbf{U}})^{2}-4C_{2}(C_{1}+C_{2}r_{%
\mathbf{U}})r_{\mathbf{U}}}}{2(C_{1}+C_{2}r_{\mathbf{U}})}  \label{33}
\end{equation}%
where the radicand is nonnegative for $\left\Vert \mathbf{U}\right\Vert
_{\ast }$ sufficiently small. Then (\ref{32}) becomes%
\begin{equation*}
\text{Re}(([\mathcal{A}+B]\varphi ,\varphi ))_{H_N^{\bot}}\leq -\frac{%
(\sigma (u_{0}),\epsilon (u_{0}))_{\mathcal{O}}}{4}-\eta \frac{\left\Vert
u_{0}\right\Vert _{\mathcal{O}}^{2}}{4}-\frac{\xi }{2}\left\Vert
p_{0}\right\Vert _{\mathcal{O}}^{2}-\frac{\xi }{2}\left\Vert \Delta
w_{1}\right\Vert _{\Omega }^{2}
\end{equation*}%
\begin{equation*}
-\frac{(\sigma (u_{0}),\epsilon (u_{0}))_{\mathcal{O}}}{4}-\eta \frac{%
\left\Vert u_{0}\right\Vert _{\mathcal{O}}^{2}}{4}
\end{equation*}%
\begin{equation*}
+C_{K}\left[ r_{\mathbf{U}}+\xi r_{\mathbf{U}}+\xi ^{2}+\xi \right] \left\{
(\sigma (u_{0}),\epsilon (u_{0}))_{\mathcal{O}}+\eta \left\Vert
u_{0}\right\Vert _{\mathcal{O}}^{2}\right\} .
\end{equation*}%
With $\xi $ as prescribed in (\ref{33}), we now have the dissipativity
estimate (\ref{dissest}), for $\left\Vert \mathbf{U}\right\Vert _{\ast }$
small enough. (Here we also implicitly re-use Korn's inequality and $C_{K}$
is the constant there). This concludes the proof of Lemma \ref{diss}.
\end{proof}

\subsection{Maximality of the Generator $(\mathcal{A}+B)$}

In order to complete the proof of Theorem \ref{wp}, we also need to show
that the semigroup generator $(\mathcal{A}+B):D(\mathcal{A}+B)\cap
H_{N}^{\bot }\subset H_{N}^{\bot }\rightarrow H_{N}^{\bot }$ is maximal
dissipative. This is given in the following lemma:

\begin{lemma}
\label{md} With reference to problem (\ref{1})-(\ref{3}), the semigroup
generator $(\mathcal{A}+B):D(\mathcal{A}+B)\cap H_{N}^{\bot }\subset
H_{N}^{\bot }\rightarrow H_{N}^{\bot }$ is maximal dissipative. In other
words, the following range condition holds: 
\begin{equation}
\text{Range}[\lambda I-(\mathcal{A}+B)]=H_{N}^{\bot }  \label{range}
\end{equation}%
for some $\lambda >0.$
\end{lemma}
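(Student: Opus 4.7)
The plan is to establish the range identity (\ref{range}) for some (in fact every) $\lambda > 0$ by combining three ingredients: an inverse estimate extracted directly from the dissipativity of Lemma \ref{diss}, closedness of $[\lambda I - (\mathcal{A}+B)]$, and density of its range in $H_N^\bot$. Together, via the classical functional analysis lemma deferred to the Appendix, these yield surjectivity and hence maximality.

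First I would extract the inverse estimate. For $\varphi \in D(\mathcal{A}+B) \cap H_N^\bot$, taking real parts in the identity $(([\lambda I - (\mathcal{A}+B)]\varphi,\varphi))_{H_N^\bot} = \lambda((\varphi,\varphi))_{H_N^\bot} - (((\mathcal{A}+B)\varphi,\varphi))_{H_N^\bot}$ and invoking (\ref{dissest}) yields
\begin{equation*}
\lambda \left\Vert \left\vert \varphi \right\vert \right\Vert _{H_{N}^{\bot }}^{2} \leq \mathrm{Re}\bigl(([\lambda I - (\mathcal{A}+B)]\varphi,\varphi)\bigr)_{H_N^\bot} \leq \left\Vert \left\vert [\lambda I - (\mathcal{A}+B)]\varphi \right\vert \right\Vert _{H_{N}^{\bot }} \left\Vert \left\vert \varphi \right\vert \right\Vert _{H_{N}^{\bot }},
\end{equation*}
so $\left\Vert \left\vert \varphi \right\vert \right\Vert _{H_{N}^{\bot }} \leq \lambda^{-1} \left\Vert \left\vert [\lambda I - (\mathcal{A}+B)]\varphi \right\vert \right\Vert _{H_{N}^{\bot }}$. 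This is the inverse estimate (\ref{36}); it shows $[\lambda I - (\mathcal{A}+B)]$ is injective and bounded below.

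Next I would verify closedness. Given $\{\varphi_n\} \subset D(\mathcal{A}+B) \cap H_N^\bot$ with $\varphi_n \to \varphi$ in $H_N^\bot$ and $[\lambda I - (\mathcal{A}+B)]\varphi_n \to f$, one passes to the limit component-wise in the distributional sense, using continuity of the Dirichlet lifting $D(\cdot)$ in (\ref{liftnorm}), the Neumann lifting $\nabla\psi$ in (\ref{Igormap})-(\ref{psireg}), and the Sobolev trace theorem on the Lipschitz curvilinear polyhedron $\mathcal{O}$ of Condition \ref{cond}. The distributional equations give (A.i)-(A.iii); convergence of $w_{2_n} + \mathbf{U}\cdot\nabla w_{1_n}$ in $H_0^2(\Omega)$ yields (A.v); and the tangential flow-free and tangential-stress-free conditions (A.vi), (A.iv) persist under $\mathbf{H}^{-1/2}(\partial\mathcal{O})$ convergence of the boundary stresses. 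Hence $\varphi \in D(\mathcal{A}+B) \cap H_N^\bot$ and $[\lambda I - (\mathcal{A}+B)]\varphi = f$. Combined with the inverse estimate, this gives that the range is closed in $H_N^\bot$.

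Finally, for density I would argue that the orthogonal complement (with respect to the standard $\mathcal{H}$-inner product) of $\mathrm{Range}[\lambda I - (\mathcal{A}+B)]$ in $H_N^\bot$ is trivial. Any such $\psi$ lies in $D((\mathcal{A}+B)^*)$ and satisfies $[\lambda I - (\mathcal{A}+B)^*]\psi = 0$, with $(\mathcal{A}+B)^*$ given explicitly by (\ref{adj-AplusB}) of the Appendix. The adjoint FSI system differs from the primal one only by sign changes on the transport, material derivative, and $B$-type terms, so essentially the same multiplier-based computation that underlies Lemma \ref{diss} (using $\nabla\psi$ and the vector field $h_\alpha$) produces a dissipativity bound for $(\mathcal{A}+B)^*$ on $H_N^\bot$ under the same smallness assumption on $\Vert\mathbf{U}\Vert_*$; taking real parts in $(([\lambda I - (\mathcal{A}+B)^*]\psi,\psi))_{H_N^\bot} = 0$ then forces $\psi = 0$. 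I expect the main obstacle to sit here: one must justify that $\psi \in D((\mathcal{A}+B)^*) \cap H_N^\bot$ carries enough regularity (via elliptic boot-strapping on the adjoint pressure/flow pair and the adjoint plate equation) for the multipliers to be legitimate, and check that the boundary integrals corresponding to (\ref{I6})-(\ref{I8}) absorb correctly in the adjoint system. With closed range and density in hand, (\ref{range}) follows and the proof is complete.
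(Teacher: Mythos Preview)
Your three–step architecture (inverse estimate, closedness, density of the range via triviality of the adjoint null space, then Lemma \ref{pazy}) is exactly the paper's route, and your derivation of the inverse estimate directly from (\ref{dissest}) is correct and in fact cleaner than the paper's Lemma \ref{inv}.

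Two points deserve comment. First, your closedness sketch has a real gap: convergence $\varphi_n\to\varphi$ in $H_N^\bot$ only gives $u_{0n}\to u_0$ in $\mathbf{L}^2(\mathcal{O})$, which is not enough to place $u_0$ in $\mathbf{H}^1(\mathcal{O})$ or to make sense of the trace conditions (A.iv), (A.vi). The paper fixes this by feeding the Cauchy difference $\varphi_m-\varphi_n$ into (\ref{dissest}); the term $(\sigma(u_{0m}-u_{0n}),\epsilon(u_{0m}-u_{0n}))_{\mathcal{O}}$ on the right then forces $u_{0n}$ to be Cauchy in $\mathbf{H}^1(\mathcal{O})$, after which the remaining domain conditions follow by reading off the PDE relations and passing boundary stresses to the limit. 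Your invocation of the Dirichlet and Neumann liftings is not the mechanism here.

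Second, for density the paper does \emph{not} redo the special-inner-product dissipativity for $(\mathcal{A}+B)^\ast$. Instead it works in the \emph{standard} $\mathcal{H}$-inner product: starting from $\lambda\Vert\varphi\Vert_{\mathcal{H}}^2=((\mathcal{A}+B)^\ast\varphi,\varphi)_{\mathcal{H}}$, it applies the multiplier $\nabla\psi(p_0,w_1)$ to the adjoint flow equation and $w_1$ to the adjoint plate equation to recover $\Vert p_0\Vert^2+\Vert\Delta w_1\Vert^2$, and then uses a separate flux multiplier $\gamma\cdot\nabla w_1$ (with $\gamma$ a $C^2$ extension of $\nu$) to control the boundary integral $\int_{\partial\Omega}|\Delta w_1|^2$ that arises. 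Your proposed route---mirroring Lemma \ref{diss} for the adjoint in the $((\cdot,\cdot))_{H_N^\bot}$ inner product---is plausible but would require rebuilding the entire multiplier calculus (\ref{I1})--(\ref{I8}) for the adjoint system, whereas the paper's direct computation is shorter and stays in the natural norm.
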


\vspace{0.3cm} \textbf{Proof of Lemma \ref{md}}\newline

Proof of relation (\ref{range}) is based on showing that $[\lambda I-(%
\mathcal{A}+B)]^{-1}\in \mathcal{L}(H_{N}^{\bot }).$ For this, we appeal to
linear operator theory and exploit Lemma \ref{pazy} in Appendix as our main
tool. So, with respect to Lemma \ref{pazy} the requirements to be shown are: 
\newline
\newline
$\mathbf{(M-I)}$ \ \ Range$[\lambda I-(\mathcal{A}+B)]$ is dense in $%
H_{N}^{\bot },$\newline
$\mathbf{(M-II)}$ \ $[\lambda I-(\mathcal{A}+B)]$ is a closed operator.%
\newline
$\mathbf{(M-III)}$ There is an $m>0$ such that 
\begin{equation*}
\left\Vert \left\vert [\lambda I-(\mathcal{A}+B)]\varphi \right\vert
\right\Vert _{H_{N}^{\bot }} \geq m\left\Vert \left\vert \varphi \right\vert
\right\Vert _{H_{N}^{\bot }}
\end{equation*}
for all $\varphi \in D([\lambda I-(\mathcal{A}+B)])\cap {H_{N}^{\bot }}=D(%
\mathcal{A}+B)\cap {H_{N}^{\bot }}.$ \newline\newline

\noindent \underline{\textbf{STEP (M-I):}} Firstly, to prove that Range$[\lambda I-(%
\mathcal{A}+B)]$ is dense in $H_{N}^{\bot },$ we use the fact that 
\begin{equation*}
\text{Range}[\lambda I-(\mathcal{A}+B)]=Null([\lambda I-(\mathcal{A}%
+B)]^{\ast })^{\bot }
\end{equation*}%
which is given in the following lemma:

\begin{lemma}
\label{denserange} Let parameter $\lambda >0$ be given. Then for $\left\Vert 
\mathbf{U}\right\Vert _{\ast }$ sufficiently small,%
\begin{equation*}
Null[\lambda I-(\mathcal{A}+B)^{\ast }]=\{0\}
\end{equation*}
\end{lemma}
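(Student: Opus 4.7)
The strategy rests on the standard Hilbert-space duality
\begin{equation*}
\overline{\text{Range}[\lambda I - (\mathcal{A}+B)]} = \text{Null}[\bar{\lambda} I - (\mathcal{A}+B)^*]^{\bot},
\end{equation*}
so (since $\lambda > 0$ is real) establishing density of range amounts to verifying the null-space triviality claimed in the lemma. I would therefore take an arbitrary $\Phi^* = [p^*, u^*, w_1^*, w_2^*] \in D((\mathcal{A}+B)^*) \cap H_N^{\bot}$ satisfying $(\mathcal{A}+B)^* \Phi^* = \lambda \Phi^*$ and aim to force $\Phi^* = 0$.

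First I would pair the eigenvalue equation with $\Phi^*$ itself in the equivalent inner product $((\cdot,\cdot))_{H_N^{\bot}}$ from (\ref{innpro}), yielding
\begin{equation*}
\lambda \left\Vert \left\vert \Phi^* \right\vert \right\Vert _{H_N^{\bot}}^2 = \text{Re}(((\mathcal{A}+B)^* \Phi^*, \Phi^*))_{H_N^{\bot}}.
\end{equation*}
Since $\lambda > 0$, any nonpositive bound on the right-hand side would force $\Phi^* = 0$; the heart of the proof thus reduces to an adjoint analogue of the dissipativity estimate (\ref{dissest}).

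To obtain this adjoint dissipativity, I would unpack the explicit form of $(\mathcal{A}+B)^*$ furnished by (\ref{adj-AplusB}) and replicate the calculation of Lemma \ref{diss} term by term. The same toolkit applies: integration by parts against the multiplier $\nabla \psi(\cdot,\cdot)$ of (\ref{Igormap}), the Dirichlet lifting $D(\cdot)$ with bound (\ref{liftnorm}), the flux field $h_\alpha = \mathbf{U}|_\Omega - \alpha g$ together with the commutator identity (\ref{F*}) on the plate component, and the Sobolev trace theorem across the flow-structure interface. The parameters $\alpha = 2\Vert \mathbf{U}\Vert_*$ and $\xi$ given by (\ref{33}) are retained; the polynomial $r(\Vert\mathbf{U}\Vert_*)$ that controls the non-dissipative cross terms throughout Lemma \ref{diss} is invariant under passage to the adjoint.

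The principal obstacle will be verifying that the favorable signs in (\ref{F1})-(\ref{F2}) and the absorption mechanism culminating in (\ref{31})-(\ref{32}) survive the adjoint operation, since several boundary terms and the material-derivative contribution in $B^*$ carry opposite sign conventions relative to $B$. In particular, the boundary trace identities arising from the plate flux multiplier, the self-pairing involving $\mathbf{U}\cdot\nabla p^*$, and the coupled pressure/interface contributions from $B^*$ must be reassessed with care. I expect that after taking real parts the symmetric cancellation already witnessed in the $\frac{1}{2}\int_{\mathcal{O}} \text{div}(\mathbf{U})[|u|^2 - |p|^2]$ term in (\ref{est}) persists, so that smallness of $\Vert\mathbf{U}\Vert_*$ combined with Korn's inequality again absorbs any residual unfavorable terms into the viscous and elastic dissipation. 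This then yields an estimate of the form $\lambda \left\Vert \left\vert \Phi^* \right\vert \right\Vert _{H_N^{\bot}}^2 \leq 0$, forcing $\Phi^* = 0$ as required.
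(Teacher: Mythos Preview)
Your route is genuinely different from the paper's, and it carries an unresolved difficulty.

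\textbf{What the paper does.} The paper does \emph{not} work in the special inner product $((\cdot,\cdot))_{H_N^\bot}$ at all for this lemma. It pairs the eigenvalue equation with $\varphi$ in the \emph{standard} $\mathcal{H}$-inner product, writes out the adjoint system (\ref{1.2}) as a concrete PDE, and then attacks the residual non-dissipative terms on the right of (\ref{1.5}) with a battery of multipliers applied directly to those equations: $\nabla\psi(p_0,w_1)$ on the fluid equation, $w_1$ on the structural equation, the Dirichlet lift $D_\Omega(\mathbf{U}\cdot\nabla w_1)$ to rewrite interface traces as interior integrals, and---an ingredient your plan does not anticipate---the flux multiplier $\gamma\cdot\nabla w_1$ (with $\gamma$ a $C^2$ extension of the boundary normal $\nu$) applied to the structural equation to control the trace $\int_{\partial\Omega}|\Delta w_1|^2$. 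The upshot is the inequality $\lambda\|\varphi\|_{\mathcal{H}}^2 + (\sigma(u_0),\epsilon(u_0))_{\mathcal{O}} + \eta\|u_0\|^2 \leq C\|\mathbf{U}\|_*\{\text{same}\}$, whence $\varphi=0$ for $\|\mathbf{U}\|_*$ small.

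\textbf{The concern with your route.} You propose to show $\text{Re}(((\mathcal{A}+B)^*\Phi^*,\Phi^*))_{H_N^\bot}\leq 0$ by replicating Lemma~\ref{diss} term by term. But the operator whose dissipativity Lemma~\ref{diss} establishes is $(\mathcal{A}+B)$, and the operator you need here is its $\mathcal{H}$-adjoint, which is \emph{not} the same as its $((\cdot,\cdot))_{H_N^\bot}$-adjoint; there is no abstract reason the $\mathcal{H}$-adjoint should be dissipative in the special inner product. Concretely, inspection of (\ref{adj-AplusB}) shows that the $w_1$-row of $(\mathcal{A}+B)^*$ acquires the nonlocal pieces $\mathring{A}^{-1}\{(\text{div}[U_1,U_2]+\mathbf{U}\cdot\nabla)[p_0+2\nu\partial_{x_3}(u_0)_3+\lambda\,\text{div}(u_0)-\Delta^2 w_1]_\Omega\}$ and $\Delta\mathring{A}^{-1}\nabla^*(\nabla\cdot(\mathbf{U}\cdot\nabla w_1))$, for which there is no counterpart in Lemma~\ref{diss}. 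These feed into the $(\Delta\cdot,\Delta\cdot)_\Omega$ block of $((\cdot,\cdot))_{H_N^\bot}$ and produce terms---in particular a boundary contribution $\int_{\partial\Omega}(\mathbf{U}\cdot\nu)|\Delta w_1|^2$ with no compensating $-\alpha$ from $h_\alpha$---that the machinery of (\ref{F1})--(\ref{F2}) does not absorb as written. You correctly flag this as ``the principal obstacle,'' but the proposal does not resolve it; the expectation that ``symmetric cancellation persists'' is not justified, and in fact the paper's own treatment of precisely these terms (the $K_2$ estimate and the $\gamma\cdot\nabla w_1$ multiplier step (\ref{1.15})--(\ref{1.18})) is what consumes most of the work. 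So as it stands your plan has a gap at exactly the point you identify as its principal obstacle.
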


\begin{proof}
Suppose that $\varphi =\left[ p_{0},u_{0},w_{1},w_{2}\right] \in D((\mathcal{%
A}+B)^{\ast })\cap H_{N}^{\bot }$ satisfies%
\begin{equation}
[\lambda I-(\mathcal{A}+B)^{\ast }]\varphi =0.  \label{1.1}
\end{equation}%
In PDE terms, this is%
\begin{equation}
\left\{ 
\begin{array}{c}
\lambda p_{0}-\mathbf{U}\nabla p_{0}-\text{div}(u_{0})=0\text{ \ \ in \ \ }%
\mathcal{O} \\ 
\lambda u_{0}-\nabla p_{0}-\text{div}\sigma (u_{0})+\eta u_{0}-\mathbf{U}%
\nabla u_{0}+\text{div}(\mathbf{U})u_{0}=0\text{ \ \ in \ \ }\mathcal{O} \\ 
u_{0}\cdot n=0\text{ \ \ on \ \ }S \\ 
u_{0}\cdot n=w_{2}\text{ \ \ on \ \ }\Omega \\ 
\lambda w_{1}+w_2-{{\mathring{A}}^{-1}\left\{ \text{div}{{[U}_{1},U_{2}]{+%
\mathbf{U\cdot }\nabla }}\right\} }\left[ p_{0}+2\nu \partial
_{x_{3}}(u_{0})_{3}+\lambda \text{div}(u_{0})-\Delta ^{2}w_{1}\right]
_{\Omega } \\ 
-\mathbf{U}\mathbb{\cdot }\nabla w_{1}-\Delta {{\mathring{A}}^{-1}\nabla }%
^{\ast }(\mathbb{\nabla \cdot }(\mathbf{U}\mathbb{\cdot }\nabla w_{1})%
\mathbb{)}=0\text{ \ \ in \ \ }\Omega \\ 
\lambda w_{2}+\left[ p_{0}+2\nu \partial _{x_{3}}(u_{0})_{3}+\lambda \text{%
div}(u_{0})\right] |_{\Omega }-\Delta ^{2}w_{1}=0\text{ \ \ in \ \ }\Omega
\\ 
w_{1}|_{\partial \Omega }=\frac{\partial w_{1}}{\partial \nu }|_{\partial
\Omega }=0%
\end{array}%
\right.  \label{1.2}
\end{equation}%
Since we have from (\ref{1.1}) 
\begin{equation}
0=\lambda \left\Vert \varphi \right\Vert _{\mathcal{H}}^{2}-((\mathcal{A}%
+B)^{\ast }\varphi ,\varphi )_{\mathcal{H}}  \label{1.3}
\end{equation}%
integrating by parts as usual, we get%
\begin{equation*}
\lambda \left\Vert \varphi \right\Vert _{\mathcal{H}}^{2}+(\sigma
(u_{0}),\epsilon (u_{0}))_{\mathcal{O}}+\eta \left\Vert u_{0}\right\Vert _{%
\mathcal{O}}^{2}
\end{equation*}%
\begin{equation*}
=-\frac{1}{2}\int\limits_{\mathcal{O}}\text{div}(\mathbf{U}%
)[|p_{0}|^{2}+3|u_{0}|^{2}]d\mathcal{O}
\end{equation*}%
\begin{equation*}
+\left( {\left\{ \text{div}{{[U}_{1},U_{2}]{+\mathbf{U\cdot }\nabla }}%
\right\} }\left[ p_{0}+2\nu \partial _{x_{3}}(u_{0})_{3}+\lambda \text{div}%
(u_{0})-\Delta ^{2}w_{1}\right] _{\Omega },w_{1}\right) _{\Omega }
\end{equation*}%
\begin{equation}
+\left( \Delta \lbrack \mathbf{U}\mathbb{\cdot }\nabla w_{1}],\Delta
w_{1}\right) _{\Omega }+\left( {\nabla }^{\ast }(\mathbb{\nabla \cdot }(%
\mathbf{U}\mathbb{\cdot }\nabla w_{1})\mathbb{)},\Delta w_{1}\right)
_{\Omega }  \label{1.5}
\end{equation}%
To handle the terms on RHS of (\ref{1.5}), we firstly invoke the map given
in (\ref{Igormap}) and apply the multiplier $\nabla \psi (p_{0},w_{1})$ to
the fluid equation (\ref{1.2})$_{2}.$ This gives%
\begin{equation*}
\lambda \left( u_{0},\nabla \psi (p_{0},w_{1})\right) _{\mathcal{O}}-\left(
\nabla p_{0},\nabla \psi (p_{0},w_{1})\right) _{\mathcal{O}}-\left( \text{div%
}\sigma (u_{0}),\nabla \psi (p_{0},w_{1})\right) _{\mathcal{O}}
\end{equation*}%
\begin{equation}
+\eta \left( u_{0},\nabla \psi (p_{0},w_{1})\right) _{\mathcal{O}}-\left( 
\mathbf{U}\nabla u_{0},\nabla \psi (p_{0},w_{1})\right) _{\mathcal{O}}+\left(%
\text{div}(\mathbf{U}) u_{0},\nabla \psi (p_{0},w_{1})\right) _{\mathcal{O}%
}=0  \label{1.6}
\end{equation}%
Let us look at the terms of (\ref{1.6}):%
\begin{equation*}
-\left( \nabla p_{0},\nabla \psi (p_{0},w_{1})\right) _{\mathcal{O}%
}=\int\limits_{\partial \mathcal{O}}(p_{0}\cdot n)\nabla \psi
(p_{0},w_{1})d\partial \mathcal{O}
\end{equation*}%
\begin{equation*}
+\int\limits_{\mathcal{O}}p_{0}\text{div}(\nabla \psi (p_{0},w_{1}))d%
\mathcal{O}
\end{equation*}%
\begin{equation}
=-\int\limits_{\mathcal{O}}|p_{0}|^{2}d\mathcal{O-}\int\limits_{%
\Omega}p_{0}w_{1}d\Omega.  \label{i}
\end{equation}%
Also, 
\begin{equation*}
-\left( \text{div}\sigma (u_{0}),\nabla \psi (p_{0},w_{1})\right) _{\mathcal{%
O}}+\eta \left( u_{0},\nabla \psi (p_{0},w_{1})\right) _{\mathcal{O}}
\end{equation*}%
\begin{equation*}
=\left( \sigma (u_{0}),\epsilon (\nabla \psi (p_{0},w_{1}))\right) _{%
\mathcal{O}}-\left\langle \sigma (u_{0})\cdot n,\nabla \psi
(p_{0},w_{1})\right\rangle _{\partial \mathcal{O}}
\end{equation*}%
\begin{equation}
+\eta \left( u_{0},\nabla \psi (p_{0},w_{1})\right) _{\mathcal{O}}
\label{ii}
\end{equation}%
Applying (\ref{i})-(\ref{ii}) to (\ref{1.6}), we then have%
\begin{equation*}
\int\limits_{\mathcal{O}}|p_{0}|^{2}d\mathcal{O=}\lambda \left( u_{0},\nabla
\psi (p_{0},w_{1})\right) _{\mathcal{O}}-\left( \mathbf{U}\nabla
u_{0},\nabla \psi (p_{0},w_{1})\right) _{\mathcal{O}}
\end{equation*}%
\begin{equation*}
+\left(\text{div}(\mathbf{U}) u_{0},\nabla \psi (p_{0},w_{1})\right) _{%
\mathcal{O}}-\left( \left[ p_{0}+2\nu \partial _{x_{3}}(u_{0})_{3}+\lambda 
\text{div}(u_{0})\right] _{\Omega },w_{1}\right) _{\Omega }
\end{equation*}%
\begin{equation}
+\left( \sigma (u_{0}),\epsilon (\nabla \psi (p_{0},w_{1}))\right) _{%
\mathcal{O}}+\eta \left( u_{0},\nabla \psi (p_{0},w_{1})\right) _{\mathcal{O}%
}  \label{1.7}
\end{equation}%
Subsequently, we apply the multiplier $w_{1}$ to the structural equation in (%
$\ref{1.2})_7$, and use (\ref{1.7}) to get%
\begin{equation*}
\int\limits_{\mathcal{O}}|p_{0}|^{2}d\mathcal{O+}\left( \Delta
^{2}w_{1},w_{1}\right) _{\Omega }\mathcal{=\lambda (}w_{2},w_{1}\mathcal{)}%
_{\Omega }+\lambda \left( u_{0},\nabla \psi (p_{0},w_{1})\right) _{\mathcal{O%
}}
\end{equation*}%
\begin{equation*}
+\left( \sigma (u_{0}),\epsilon (\nabla \psi (p_{0},w_{1}))\right) _{%
\mathcal{O}}+\eta \left( u_{0},\nabla \psi (p_{0},w_{1})\right) _{\mathcal{O}%
}
\end{equation*}
\begin{equation}
-\left( \mathbf{U}\nabla u_{0},\nabla \psi (p_{0},w_{1})\right) _{\mathcal{O}%
} +\left(\text{div}(\mathbf{U}) u_{0},\nabla \psi (p_{0},w_{1})\right) _{%
\mathcal{O}}  \label{1.8}
\end{equation}
To estimate the terms on RHS of (\ref{1.8}), we appeal to the elliptic
regularity results for solutions of second order BVPs on corner domains \cite%
{Dauge_1}. At this point, using the geometrical assumptions in Condition \ref%
{cond} and the higher regularity estimate 
\begin{equation*}
\left\Vert \psi (p,w)\right\Vert _{H^{2}(\mathcal{O)}}\leq C\left[
\left\Vert p\right\Vert _{\mathcal{O}}+\left\Vert w_{ext}\right\Vert _{H^{%
\frac{1}{2}+\varepsilon }(\partial \mathcal{O)}}\right]
\end{equation*}%
\begin{equation}
\leq C[\left\Vert p\right\Vert _{\mathcal{O}}+\left\Vert w\right\Vert
_{H_{0}^{2}(\Omega \mathcal{)}}],  \label{1.8.5}
\end{equation}%
where 
\begin{equation*}
w_{ext}(x)=\left\{ 
\begin{array}{c}
0,\text{ \ \ }x\in S \\ 
w(x),\text{ \ \ }x\in \Omega%
\end{array}%
\right.
\end{equation*}%
we obtain 
\begin{equation}
\int\limits_{\mathcal{O}}|p_{0}|^{2}d\mathcal{O+}\int\limits_{\Omega
}|\Delta w_{1}|^{2}d\Omega \leq C_{\epsilon }r(\left\Vert \mathbf{U}%
\right\Vert _{\ast })\left\{ \sigma (u_{0}),\epsilon (u_{0}))_{\mathcal{O}%
}+\eta \left\Vert u_{0}\right\Vert _{\mathcal{O}}^{2}+\lambda \left\Vert
\varphi \right\Vert _{\mathcal{H}}^{2}\right\}  \label{1.9}
\end{equation}%
Here, we also used Holder-Young Inequalities and $r(\cdot )$ and $\left\Vert 
\mathbf{U}\right\Vert _{\ast }$ are given as in (\ref{poly}) and (\ref{normU}%
), respectively. Now, to proceed with the second term on RHS of (\ref{1.5}):%
\begin{equation*}
\left( {\left\{ \text{div}{{[U}_{1},U_{2}]{+\mathbf{U\cdot }\nabla }}%
\right\} }\left[ p_{0}+2\nu \partial _{x_{3}}(u_{0})_{3}+\lambda \text{div}%
(u_{0})-\Delta ^{2}w_{1}\right] _{\Omega },w_{1}\right) _{\Omega }
\end{equation*}%
\begin{equation*}
=\left( {\left\{ \text{div}{{[U}_{1},U_{2}]{+\mathbf{U\cdot }\nabla }}%
\right\} }\left[ p_{0}+2\nu \partial _{x_{3}}(u_{0})_{3}+\lambda \text{div}%
(u_{0})\right] _{\Omega },w_{1}\right) _{\Omega }
\end{equation*}%
\begin{equation*}
-\left( {\left\{ \text{div}{{[U}_{1},U_{2}]{+\mathbf{U\cdot }\nabla }}%
\right\} }\Delta ^{2}w_{1},w_{1}\right) _{\Omega }
\end{equation*}%
\begin{equation}
=K_{1}+K_{2}  \label{1.9.5}
\end{equation}%
For $K_{1}:$%
\begin{equation*}
K_{1}=\left( {\left\{ \text{div}{{[U}_{1},U_{2}]{+\mathbf{U\cdot }\nabla }}%
\right\} }\left[ p_{0}+2\nu \partial _{x_{3}}(u_{0})_{3}+\lambda \text{div}%
(u_{0})\right] _{\Omega },w_{1}\right) _{\Omega }
\end{equation*}%
\begin{equation}
=-\left( \left[ p_{0}+2\nu \partial _{x_{3}}(u_{0})_{3}+\lambda \text{div}%
(u_{0})\right] _{\Omega },{{\mathbf{U\cdot }\nabla }}w_{1}\right) _{\Omega }
\label{1.10}
\end{equation}%
To handle the term on RHS of (\ref{1.10}): Let $D_{\Omega }:H_{0}^{\frac{1}{2%
}+\epsilon }(\Omega )\rightarrow H^{1}(\mathcal{O})$ be defined by 
\begin{equation}
D_{\Omega }g=f\Leftrightarrow \left\{ 
\begin{array}{c}
-\Delta f=0\text{ \ \ \ in \ }\mathcal{O} \\ 
f|_{S}=0\text{ \ \ on \ }S \\ 
f|_{\Omega }=g\text{ \ \ on \ }\Omega%
\end{array}%
\right.  \label{1.10.5}
\end{equation}%
Therewith,%
\begin{equation*}
\left( \left[ p_{0}+2\nu \partial _{x_{3}}(u_{0})_{3}+\lambda \text{div}%
(u_{0})\right] _{\Omega },{{\mathbf{U\cdot }\nabla }}w_{1}\right) _{\Omega
}=\left( \sigma (u_{0}),\epsilon (D_{\Omega }({{\mathbf{U\cdot }\nabla }}%
w_{1}))\right) _{\mathcal{O}}
\end{equation*}%
\begin{equation*}
+\left( \nabla p_{0},D_{\Omega }({{\mathbf{U\cdot }\nabla }}w_{1})\right) _{%
\mathcal{O}}+\left( p_{0},\text{div}(D_{\Omega }({{\mathbf{U\cdot }\nabla }}%
w_{1}))\right) _{\mathcal{O}}+\left( \text{div}\sigma (u_{0}),D_{\Omega }({{%
\mathbf{U\cdot }\nabla }}w_{1})\right) _{\mathcal{O}}
\end{equation*}%
\begin{equation*}
=\left( \sigma (u_{0}),\epsilon (D_{\Omega }({{\mathbf{U\cdot }\nabla }}%
w_{1}))\right) _{\mathcal{O}}+\eta \left( u_{0},D_{\Omega }({{\mathbf{U\cdot 
}\nabla }}w_{1})\right) _{\mathcal{O}}+\left( p_{0},\text{div}(D_{\Omega }({{%
\mathbf{U\cdot }\nabla }}w_{1}))\right) _{\mathcal{O}}
\end{equation*}%
\begin{equation}
+\lambda \left( u_{0},D_{\Omega }({{\mathbf{U\cdot }\nabla }}w_{1})\right) _{%
\mathcal{O}}-\left( {{\mathbf{U\cdot }\nabla }}u_{0},D_{\Omega }({{\mathbf{%
U\cdot }\nabla }}w_{1})\right) _{\mathcal{O}}+\left( \text{div}(\mathbf{U}%
)u_{0},D_{\Omega }({{\mathbf{U\cdot }\nabla }}w_{1})\right) _{\mathcal{O}}
\label{1.11}
\end{equation}%
Now, applying (\ref{1.11}) to RHS of (\ref{1.10}), and invoking (\ref{1.9})
we then have%
\begin{equation*}
\left\vert K_{1}\right\vert =\left\vert \left( {\left\{ \text{div}{{[U}%
_{1},U_{2}]{+\mathbf{U\cdot }\nabla }}\right\} }\left[ p_{0}+2\nu \partial
_{x_{3}}(u_{0})_{3}+\lambda \text{div}(u_{0})\right] _{\Omega },w_{1}\right)
_{\Omega }\right\vert
\end{equation*}%
\begin{equation}
\leq Cr(\left\Vert \mathbf{U}\right\Vert _{\ast })\left\{ \sigma
(u_{0}),\epsilon (u_{0}))_{\mathcal{O}}+\eta \left\Vert u_{0}\right\Vert _{%
\mathcal{O}}^{2}+\lambda \left\Vert \varphi \right\Vert _{\mathcal{H}%
}^{2}\right\}  \label{1.12}
\end{equation}%
where again $r(\cdot )$ and $\left\Vert \mathbf{U}\right\Vert _{\ast }$ are
given as in (\ref{poly}) and (\ref{normU}), respectively. Let us now
continue with $K_{2}:$%
\begin{equation*}
K_{2}=-\left( {\left\{ \text{div}{{[U}_{1},U_{2}]{+\mathbf{U\cdot }\nabla }}%
\right\} }\Delta ^{2}w_{1},w_{1}\right) _{\Omega }
\end{equation*}%
\begin{equation}
=\left( \Delta ^{2}w_{1},{{\mathbf{U\cdot }\nabla }}w_{1}\right) _{\Omega }
\label{1.13}
\end{equation}%
If we argue as in the estimates (\ref{use1})-(\ref{use2}) by replacing $%
h_{\alpha}$ with ${{\mathbf{U,}}}$ we then have%
\begin{equation}
\left( \Delta ^{2}w_{1},{{\mathbf{U\cdot }\nabla }}w_{1}\right) _{\Omega
}=~(\Delta w_{1},[\Delta ,{{\mathbf{U}}}\cdot \nabla ]w_{1})_{\Omega } 
\notag
\end{equation}%
\begin{equation}
-\frac{1}{2}\int_{\partial \Omega }({{\mathbf{U}}}\cdot \mathbf{\nu }%
)|\Delta w_{1}|^{2}d\partial \Omega -\frac{1}{2}\int_{\Omega }\text{div}({{%
\mathbf{U}}})|\Delta w_{1}|^{2}d\Omega  \label{1.14}
\end{equation}%
For the second term on RHS of (\ref{1.14}), let $\gamma (x)$ be a $C^{2}-$%
extension of the normal vector $\mathbf{\nu (x)}$ to the boundary of $\Omega
.$ Applying the multiplier $\gamma \cdot \nabla w_{1}$ to the structral
equation (\ref{1.2})$_{7},$ we get%
\begin{equation}
\left( \Delta ^{2}w_{1},\gamma {{\mathbf{\cdot }\nabla }}w_{1}\right)
_{\Omega }=\left( \left[ p_{0}+2\nu \partial _{x_{3}}(u_{0})_{3}+\lambda 
\text{div}(u_{0})\right] |_{\Omega },\gamma {{\mathbf{\cdot }\nabla }}%
w_{1}\right) _{\Omega }+\lambda (w_{2},\gamma {{\mathbf{\cdot }\nabla }}%
w_{1})_{\Omega }  \label{1.15}
\end{equation}%
Revoking the elliptic map (\ref{1.10.5}), we have%
\begin{equation*}
\left( \left[ p_{0}+2\nu \partial _{x_{3}}(u_{0})_{3}+\lambda \text{div}%
(u_{0})\right] |_{\Omega },\gamma {{\mathbf{\cdot }\nabla }}w_{1}\right)
_{\Omega }
\end{equation*}%
\begin{equation*}
=\left( \sigma (u_{0}),\epsilon (D_{\Omega }(\gamma {{\mathbf{\cdot }\nabla }%
}w_{1}))\right) _{\mathcal{O}}+\eta \left( u_{0},D_{\Omega }(\gamma {{%
\mathbf{\cdot }\nabla }}w_{1})\right) _{\mathcal{O}}+\left( p_{0},\text{div}%
(D_{\Omega }(\gamma {{\mathbf{\cdot }\nabla }}w_{1}))\right) _{\mathcal{O}}
\end{equation*}%
\begin{equation}
+\lambda \left( u_{0},D_{\Omega }(\gamma {{\mathbf{\cdot }\nabla }}%
w_{1})\right) _{\mathcal{O}}-\left( {{\mathbf{U\cdot }\nabla }}%
u_{0},D_{\Omega }(\gamma {{\mathbf{\cdot }\nabla }}w_{1})\right) _{\mathcal{O%
}}+\left( \text{div}(\mathbf{U})u_{0},D_{\Omega }({{\mathbf{U\cdot }\nabla }}%
w_{1})\right) _{\mathcal{O}}  \label{1.16}
\end{equation}%
Moreover, proceeding as in (\ref{1.14}), we get%
\begin{equation*}
\left( \Delta ^{2}w_{1},\gamma {{\mathbf{\cdot }\nabla }}w_{1}\right)
_{\Omega }=~(\Delta w_{1},[\Delta ,\gamma \cdot \nabla ]w_{1})_{\Omega } 
\end{equation*}%
\begin{equation}
-\frac{1}{2}\int_{\partial \Omega }|\Delta w_{1}|^{2}d\partial \Omega -\frac{%
1}{2}\int_{\Omega }\text{div}(\gamma )|\Delta w_{1}|^{2}d\Omega  \label{1.17}
\end{equation}%
Now, applying (\ref{1.16}), (\ref{1.17}) to (\ref{1.15}), using (\ref%
{commest}) (replacing $h_{\alpha}$ with $\gamma$) and subsequently
re-invoking (\ref{1.9}), we obtain%
\begin{equation}
\int_{\partial \Omega }|\Delta w_{1}|^{2}d\partial \Omega \leq Cr(\left\Vert 
\mathbf{U}\right\Vert _{\ast })\left\{ \sigma (u_{0}),\epsilon (u_{0}))_{%
\mathcal{O}}+\eta \left\Vert u_{0}\right\Vert _{\mathcal{O}}^{2}+\lambda
\left\Vert \varphi \right\Vert _{\mathcal{H}}^{2}\right\}  \label{1.18}
\end{equation}%
Combining now (\ref{1.13}), (\ref{1.14}), (\ref{1.18}) and (\ref{1.9}), we
have 
\begin{equation*}
\left\vert K_{2}\right\vert =\left\vert \left( {\left\{ \text{div}{{[U}%
_{1},U_{2}]{+\mathbf{U\cdot }\nabla }}\right\} }\Delta
^{2}w_{1},w_{1}\right) _{\Omega }\right\vert
\end{equation*}%
\begin{equation}
\leq Cr(\left\Vert \mathbf{U}\right\Vert _{\ast })\left\{ \sigma
(u_{0}),\epsilon (u_{0}))_{\mathcal{O}}+\eta \left\Vert u_{0}\right\Vert _{%
\mathcal{O}}^{2}+\lambda \left\Vert \varphi \right\Vert _{\mathcal{H}%
}^{2}\right\}  \label{1.19}
\end{equation}%
Hence, the second term of (\ref{1.5}) can be handled by%
\begin{equation*}
\left\vert \left( {\left\{ \text{div}{{[U}_{1},U_{2}]{+\mathbf{U\cdot }%
\nabla }}\right\} }\left[ p_{0}+2\nu \partial _{x_{3}}(u_{0})_{3}+\lambda 
\text{div}(u_{0})-\Delta ^{2}w_{1}\right] _{\Omega },w_{1}\right) _{\Omega
}\right\vert
\end{equation*}%
\begin{equation*}
\leq \left\vert K_{1}\right\vert +\left\vert K_{2}\right\vert
\end{equation*}%
\begin{equation}
\leq Cr(\left\Vert \mathbf{U}\right\Vert _{\ast })\left\{ \sigma
(u_{0}),\epsilon (u_{0}))_{\mathcal{O}}+\eta \left\Vert u_{0}\right\Vert _{%
\mathcal{O}}^{2}+\lambda \left\Vert \varphi \right\Vert _{\mathcal{H}%
}^{2}\right\}  \label{1.19a}
\end{equation}%
Also, for the third and fourth terms of (\ref{1.5}): 
\begin{equation*}
\left( \Delta \lbrack \mathbf{U}\mathbb{\cdot }\nabla w_{1}],\Delta
w_{1}\right) _{\Omega }+\left( {\nabla }^{\ast }(\mathbb{\nabla \cdot }(%
\mathbf{U}\mathbb{\cdot }\nabla w_{1})\mathbb{)},\Delta w_{1}\right)
_{\Omega }
\end{equation*}%
\begin{equation*}
=\left( \mathbf{U}\mathbb{\cdot }\nabla (\Delta w_{1}),\Delta w_{1}\right)
_{\Omega }+([\Delta ,\mathbf{U}\cdot \nabla ]w_{1},\Delta w_{1})_{\Omega
}+\left( \nabla \lbrack \mathbf{U}\mathbb{\cdot }\nabla w_{1}],\nabla
(\Delta w_{1})\right) _{\Omega }
\end{equation*}%
\begin{equation*}
=\frac{1}{2}\int_{\partial \Omega }({{\mathbf{U}}}\cdot \mathbf{\nu }%
)|\Delta w_{1}|^{2}d\partial \Omega -\frac{1}{2}\int_{\Omega }\text{div}({{%
\mathbf{U}}})|\Delta w_{1}|^{2}d\Omega
\end{equation*}%
\begin{equation*}
+([\Delta ,\mathbf{U}\cdot \nabla ]w_{1},\Delta w_{1})_{\Omega }-\left( 
\mathbf{U}\mathbb{\cdot }\nabla w_{1},\Delta ^{2}w_{1}\right) _{\Omega }
\end{equation*}%
Proceeding as done above, we then have%
\begin{equation*}
\left\vert \left( \Delta \lbrack \mathbf{U}\mathbb{\cdot }\nabla
w_{1}],\Delta w_{1}\right) _{\Omega }+\left( {\nabla }^{\ast }(\mathbb{%
\nabla \cdot }(\mathbf{U}\mathbb{\cdot }\nabla w_{1})\mathbb{)},\Delta
w_{1}\right) _{\Omega }\right\vert
\end{equation*}%
\begin{equation}
\leq Cr(\left\Vert \mathbf{U}\right\Vert _{\ast })\left\{ \sigma
(u_{0}),\epsilon (u_{0}))_{\mathcal{O}}+\eta \left\Vert u_{0}\right\Vert _{%
\mathcal{O}}^{2}+\lambda \left\Vert \varphi \right\Vert _{\mathcal{H}%
}^{2}\right\}  \label{1.19.5}
\end{equation}%
Finally, if we apply the estimates (\ref{1.9}), (\ref{1.19a}) and (\ref%
{1.19.5}) to RHS of (\ref{1.5}), we arrive at%
\begin{equation*}
\lambda \left\Vert \varphi \right\Vert _{\mathcal{H}}^{2}+\sigma
(u_{0}),\epsilon (u_{0}))_{\mathcal{O}}+\eta \left\Vert u_{0}\right\Vert _{%
\mathcal{O}}^{2}
\end{equation*}%
\begin{equation*}
\leq C\left\Vert \mathbf{U}\right\Vert _{*}\left\{ \lambda \left\Vert
\varphi \right\Vert _{\mathcal{H}}^{2}+(\sigma (u_{0}),\epsilon (u_{0}))_{%
\mathcal{O}}+\eta \left\Vert u_{0}\right\Vert _{\mathcal{O}}^{2}\right\}
\end{equation*}%
For $\left\Vert \mathbf{U}\right\Vert _{\ast }$ small enough-independent of $%
\lambda >0$- we infer that the solution $\varphi $ of (\ref{1.1}) is zero
which concludes the proof of Lemma \ref{denserange}.
\end{proof}
\newline\newline
\underline{\textbf{STEP (M-II):}} We continue with showing that $[\lambda I-(%
\mathcal{A}+B)]$ is a closed operator. For this, it will be enough to prove
the following lemma:

\begin{lemma}
\label{closed} The operator $\mathcal{A}+B:D(\mathcal{A}+B)\cap H_{N}^{\bot
}\rightarrow H_{N}^{\bot }$ is closed.
\end{lemma}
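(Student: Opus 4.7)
The plan is to verify closedness through the standard sequential characterization. Take a sequence $\varphi_n = [p_n,u_n,w_{1,n},w_{2,n}] \in D(\mathcal{A}+B)\cap H_N^{\bot}$ with $\varphi_n \to \varphi = [p_0,u_0,w_1,w_2]$ in $H_N^{\bot}$ and $(\mathcal{A}+B)\varphi_n \to \Phi$ in $H_N^{\bot}$; I must show $\varphi \in D(\mathcal{A}+B)\cap H_N^{\bot}$ and $(\mathcal{A}+B)\varphi = \Phi$. Since $H_N^{\bot}$ is closed in $\mathcal{H}$, $\varphi$ automatically lies in $H_N^{\bot}$. Hence the task reduces to verifying the domain conditions (A.i)--(A.vi) for $\varphi$ and passing to the limit in the four coordinate equations; the $L^2$-convergence of $(\mathcal{A}+B)\varphi_n$ renders the distributional passage automatic once the left-hand sides are seen to make sense in the limit.

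The main obstacle is upgrading the $\mathbf{L}^2$-convergence $u_n \to u_0$ to strong $\mathbf{H}^1(\mathcal{O})$-convergence; without this, neither $\text{div}\,u_0$, nor the normal trace $\partial_{x_3}(u_0)_3\vert_\Omega$, nor the transport term $\mathbf{U}\cdot\nabla u_0$ admits a meaningful limit. I obtain this by invoking the dissipativity estimate (\ref{dissest}) of Lemma \ref{diss}, applied to the difference $\varphi_m-\varphi_n \in D(\mathcal{A}+B)\cap H_N^{\bot}$; combined with Cauchy--Schwarz in the equivalent inner product $((\cdot,\cdot))_{H_N^{\bot}}$, this yields
\begin{equation*}
(\sigma(u_m-u_n),\epsilon(u_m-u_n))_{\mathcal{O}} + \eta\|u_m-u_n\|_{\mathcal{O}}^{2} \;\leq\; C\,\left\Vert\left\vert(\mathcal{A}+B)(\varphi_m-\varphi_n)\right\vert\right\Vert_{H_N^{\bot}}\,\left\Vert\left\vert\varphi_m-\varphi_n\right\vert\right\Vert_{H_N^{\bot}}.
\end{equation*}
Since $\{(\mathcal{A}+B)\varphi_n\}$ is Cauchy and $\{\varphi_n\}$ is bounded in the equivalent norm, the right-hand side tends to zero. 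Korn's inequality then forces $\{u_n\}$ to be Cauchy in $\mathbf{H}^1(\mathcal{O})$, so $u_0 \in \mathbf{H}^1(\mathcal{O})$ and $u_n \to u_0$ strongly in $\mathbf{H}^1(\mathcal{O})$.

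With $\mathbf{H}^1$-convergence of $u_n$ in hand, the remainder is bookkeeping. The first and second coordinate equations pass to the limit in $\mathcal{D}'(\mathcal{O})$ and, combined with the $L^2$-convergence of the right-hand sides, respectively yield (A.i) and (A.ii). The standard normal-trace theorem applied to (A.ii) gives $\sigma(u_0)\mathbf{n}-p_0\mathbf{n}\in \mathbf{H}^{-1/2}(\partial\mathcal{O})$ as well as $\mathbf{H}^{-1/2}$-convergence of the Cauchy-stress trace along the sequence; this transmits the tangential orthogonality (A.iv) to the limit and, joined with the distributional convergence of $\Delta^2 w_{1,n}$, furnishes (A.iii). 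The third coordinate delivers $w_2+\mathbf{U}\cdot\nabla w_1 = \Phi_3 \in H_0^2(\Omega)$, i.e.\ (A.v); since $\nabla w_1 = 0$ on $\partial\Omega$ from the clamped boundary conditions, this also forces $w_2 \in H_0^1(\Omega)$. Finally, (A.vi) is recovered by decomposing $u_n = \mathbf{f}_n + \widetilde{\mathbf{f}}_n$ with $\widetilde{\mathbf{f}}_n \in \mathbf{H}^1(\mathcal{O})$ a continuous lifting of the boundary datum $(w_{2,n}+\mathbf{U}\cdot\nabla w_{1,n})\mathbf{n}$ on $\Omega$ (zero on $S$); these data converge in $H_0^{1/2+\epsilon}(\Omega)$ by (A.v), so the $\widetilde{\mathbf{f}}_n$ may be chosen convergent in $\mathbf{H}^1(\mathcal{O})$, and then $\mathbf{f}_n = u_n - \widetilde{\mathbf{f}}_n$ converges in the closed subspace $\mathbf{V}_0$. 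With all domain conditions verified for $\varphi$, each coordinate equation passes to the $L^2$-limit, giving $(\mathcal{A}+B)\varphi = \Phi$ and completing the proof of closedness.
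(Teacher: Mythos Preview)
Your proof is correct and follows essentially the same approach as the paper: both arguments hinge on applying the dissipativity estimate (\ref{dissest}) to differences $\varphi_m-\varphi_n$ to upgrade the $\mathbf{L}^2$-convergence of $u_n$ to strong $\mathbf{H}^1(\mathcal{O})$-convergence via Korn's inequality, after which the domain conditions (A.i)--(A.vi) are verified by passing to distributional limits. The only cosmetic differences are that the paper invokes the adjoint $(\mathcal{A}+B)^*$ explicitly with test functions $[0,\Phi,0,0]$ to identify the limiting flow equation, and establishes (A.iv) through a Lax--Milgram variational characterization rather than the direct normal-trace convergence you use; both routes are equivalent.
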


\begin{proof}
Let $\left\{ \varphi _{n}\right\} =\left\{ \left[ p_{0n},u_{0n},w_{1n},w_{2n}%
\right] \right\} \subseteq D(\mathcal{A}+B)\cap H_{N}^{\bot }$ satisfy 
\begin{eqnarray*}
\mathbf{\ }\varphi _{n} &\rightarrow &\varphi \ \ \ \text{in}\ \ \
H_{N}^{\bot },\  \\
\ (\mathcal{A}+B)\varphi _{n} &\rightarrow &\varphi ^{\ast }\ \ \ \text{in \ 
}\ H_{N}^{\bot }
\end{eqnarray*}%
We must show that $\varphi \in D(\mathcal{A}+B)\cap H_{N}^{\bot },$ and $(%
\mathcal{A}+B)\varphi =\varphi ^{\ast }.$ To start, via the relation (\ref%
{dissest}) in Lemma \ref{diss}, we have%
\begin{equation*}
\frac{(\sigma (u_{0m}-u_{0n}),\epsilon (u_{0m}-u_{0n}))_{\mathcal{O}}}{4}%
\leq -\text{Re}(([\mathcal{A}+B](\varphi _{m}-\varphi _{n},\varphi
_{m}-\varphi _{n}))_{H_{N}^{\bot }}
\end{equation*}%
from which we infer that 
\begin{equation}
u_{0n}\rightarrow u\text{ \ \ in \ \ }H^{1}(\mathcal{O})  \label{2.2}
\end{equation}%
Assume that for $\varphi _{n}^{\ast }=\left\{ \left[ p_{0n}^{\ast
},u_{0n}^{\ast },w_{1n}^{\ast },w_{2n}^{\ast }\right] \right\} \subseteq
H_{N}^{\bot }$ 
\begin{equation}
(\mathcal{A}+B)\varphi _{n}=\varphi _{n}^{\ast }  \label{2.1}
\end{equation}%
In PDE terms this gives%
\begin{equation}
\left\{ 
\begin{array}{c}
-\mathbf{U}\nabla p_{0n}-\text{div}(u_{0n})-\text{div}(\mathbf{U}%
)p_{0n}=p_{0n}^{\ast }\text{ \ \ in \ \ }\mathcal{O} \\ 
-\nabla p_{0n}+\text{div}\sigma (u_{0n})-\eta u_{0n}-\mathbf{U}\nabla
u_{0n}=u_{0n}^{\ast }\text{ \ \ in \ \ }\mathcal{O} \\ 
w_{2n}+{{\mathbf{U\cdot }\nabla }}w_{1n}=w_{1n}^{\ast }\text{ \ \ in \ \ }%
\Omega \\ 
p_{0n}-[2\nu \partial _{x_{3}}(u_{0n})_{3}+\lambda \text{div}%
(u_{0n})\}|_{\Omega }-\Delta ^{2}w_{1n}=w_{2n}^{\ast }\text{ \ \ in \ \ }%
\Omega%
\end{array}%
\right.  \label{2.1.1}
\end{equation}%
If we read off the first equation in (\ref{2.1.1}) to have 
\begin{equation*}
\mathbf{U}\nabla p_{0n}=-\text{div}(u_{0n})-\text{div}(\mathbf{U}%
)p_{0n}-p_{0n}^{\ast }
\end{equation*}%
and take upon the limit when $n\rightarrow \infty $ we get%
\begin{equation}
\mathbf{U}\nabla p_{0}=[-\text{div}(u_{0})-\text{div}(\mathbf{U}%
)p_{0}-p_{0}^{\ast }]\in L^{2}(\mathcal{O})  \label{2.3}
\end{equation}%
Moreover, using the third equation in (\ref{2.1.1}), we have%
\begin{equation}
w_{2}=\lim_{n\rightarrow \infty }w_{2n}=\lim_{n\rightarrow \infty
}[w_{1n}^{\ast }-{{\mathbf{U\cdot }\nabla }}w_{1n}]=[w_{1}^{\ast }-{{\mathbf{%
U\cdot }\nabla }}w_{1}]\in H_{0}^{1}(\Omega )  \label{2.4}
\end{equation}%
In addition, from the domain criteria for $(\mathcal{A}+B)$, we have $%
u_{0n}=\mu _{0n}+\widetilde{\mu }_{0n}$, where $\mu _{0n}\in \mathbf{V}_{0}$
and $\widetilde{\mu }_{0n}\in H^{1}(\mathcal{O})$ satisfies%
\begin{equation*}
\widetilde{\mu }_{0n}=%
\begin{cases}
0 & ~\text{ on }~S \\ 
(w_{2n}+\mathbf{U}\cdot \nabla w_{1n})\mathbf{n} & ~\text{ on}~\Omega%
\end{cases}%
\end{equation*}%
Since $\mathbf{V}_{0}$ is closed, then by (\ref{2.2}), (\ref{2.4}) and the
Sobolev Trace Theorem, we have \begin{equation} u_{0}=\mu _{0}+\widetilde{%
\mu }_{0}, \label{2.5} \end{equation}where $\mu _{0}\in \mathbf{V}_{0}$ and $%
\widetilde{\mu }_{0}\in H^{1}(\mathcal{O})$ satisfies\begin{equation*} 
\widetilde{\mu }_{0}=\begin{cases} 0 & ~\text{ on }~S \\ (w_{2}+\mathbf{U}%
\cdot \nabla w_{1})\mathbf{n} & ~\text{ on}~\Omega \end{cases}\end{equation*}%
Furthermore, we recall the form of the adjoint $(\mathcal{A}+B)^{\ast }:$ $D(%
\mathcal{A}+B)^{\ast }\cap H_{N}^{\bot }\subset H_{N}^{\bot }\rightarrow
H_{N}^{\bot }$ in (\ref{adj-AplusB}) and given arbitrary $\Phi \in \mathcal{D%
}(\mathcal{O})$ we will have then $[0,\Phi ,0,0]\in D(\mathcal{A}+B)^{\ast
}\cap H_{N}^{\bot }.$ Therewith, we have \begin{equation*} \left( \varphi ,(%
\mathcal{A}+B)^{\ast }\left[ \begin{array}{c} 0 \\ \Phi \\ 0 \\ 0\end{array}%
\right] \right) _{\mathcal{H}}=\lim_{n\rightarrow \infty }\left( \varphi
_{n},(\mathcal{A}+B)^{\ast }\left[ \begin{array}{c} 0 \\ \Phi \\ 0 \\ 0\end{%
array}\right] \right) _{\mathcal{H}} \end{equation*}\begin{equation*}
=\lim_{n\rightarrow \infty }\left( (\mathcal{A}+B)\varphi _{n},\left[ \begin{%
array}{c} 0 \\ \Phi \\ 0 \\ 0\end{array}\right] \right) _{\mathcal{H}%
}=\left( (\varphi ^{\ast },\left[ \begin{array}{c} 0 \\ \Phi \\ 0 \\ 0\end{%
array}\right] \right) _{\mathcal{H}}, \end{equation*}or\begin{equation*}
(p_{0},\text{div}(\Phi ))_{\mathcal{O}}+(u_{0},\text{div}\sigma (\Phi )-\eta
\Phi +\mathbf{U}\cdot \nabla \Phi +\text{div}(\mathbf{U})\Phi )_{\mathcal{O}%
}=(u_{0}^{\ast },\Phi )_{\mathcal{O}} \end{equation*}Upon an integration by
parts this relation now becomes\begin{equation*} -(\nabla p_{0},\Phi )_{%
\mathcal{O}}+(\text{div}\sigma (u_{0}),\Phi )_{\mathcal{O}}-\eta (u_{0},\Phi
)_{\mathcal{O}}-(\mathbf{U}\cdot \nabla u_{0},\Phi )_{\mathcal{O}%
}=(u_{0}^{\ast },\Phi )_{\mathcal{O}},\text{ \ \ }\forall \text{ }\Phi \in 
\mathcal{D}(\mathcal{O}) \end{equation*}Applying a density argument to the
above relation gives\begin{equation} -\nabla p_{0}+\text{div}\sigma
(u_{0})-\eta u_{0}-\mathbf{U}\cdot \nabla u_{0}=u_{0}^{\ast }\in L^{2}(%
\mathcal{O}) \label{2.6} \end{equation}A further integration by parts
assigns a meaning to the trace $[\sigma (u_{0})\mathbf{n}-p_{0}\mathbf{n}%
]_{\partial \mathcal{O}}$ in $H^{-\frac{1}{2}}-$sense. What is more: If $%
\gamma _{0}^{+}(\cdot )\in L(H^{\frac{1}{2}}(\partial \mathcal{O}),H^{1}(%
\mathcal{O}))$ is the right inverse of Sobolev Trace Map $\gamma _{0}(\cdot
)=(\cdot )|_{\partial \mathcal{O}},$ then for every $g\in H^{\frac{1}{2}%
}(\partial \mathcal{O}),$ we have\begin{equation*} \left\langle \lbrack
\sigma (u_{0})\mathbf{n}-p_{0}\mathbf{n}]_{\partial \mathcal{O}%
},g\right\rangle _{\partial \mathcal{O}}=(\sigma (u_{0}),\epsilon (\gamma
_{0}^{+}(g)))_{\mathcal{O}}+(\text{div}\sigma (u_{0}),\gamma _{0}^{+}(g))_{%
\mathcal{O}} \end{equation*}\begin{equation*} -(p_{0},\text{div}\gamma
_{0}^{+}(g))_{\mathcal{O}}-(\nabla p_{0},\gamma _{0}^{+}(g))_{\mathcal{O}} 
\end{equation*}\begin{equation*} =(\sigma (u_{0}),\epsilon (\gamma
_{0}^{+}(g)))_{\mathcal{O}}+\eta (u_{0},\gamma _{0}^{+}(g))_{\mathcal{O}}+(%
\mathbf{U}\cdot \nabla u_{0},\gamma _{0}^{+}(g))_{\mathcal{O}} \end{equation*%
}\begin{equation*} +(u_{0}^{\ast },\gamma _{0}^{+}(g))_{\mathcal{O}}-(p_{0},%
\text{div}\gamma _{0}^{+}(g))_{\mathcal{O}} \end{equation*}\begin{equation*}
=\lim_{n\rightarrow \infty }[(\sigma (u_{0n}),\epsilon (\gamma
_{0}^{+}(g)))_{\mathcal{O}}+\eta (u_{0n},\gamma _{0}^{+}(g))_{\mathcal{O}}+(%
\mathbf{U}\cdot \nabla u_{0n},\gamma _{0}^{+}(g))_{\mathcal{O}} \end{%
equation*}\begin{equation*} +(u_{0n}^{\ast },\gamma _{0}^{+}(g))_{\mathcal{O}%
}-(p_{0n},\text{div}\gamma _{0}^{+}(g))_{\mathcal{O}}] \end{equation*}\begin{%
equation*} =\lim_{n\rightarrow \infty }\left\langle [\sigma (u_{0n})\mathbf{n%
}-p_{0n}\mathbf{n}]_{\partial \mathcal{O}},g\right\rangle _{\partial 
\mathcal{O}} \end{equation*}That is \begin{equation} \lbrack \sigma (u_{0n})%
\mathbf{n}-p_{0n}\mathbf{n}]_{\partial \mathcal{O}}\rightarrow \lbrack
\sigma (u_{0})\mathbf{n}-p_{0}\mathbf{n}]_{\partial \mathcal{O}}\text{ \ \
in \ \ }H^{\frac{1}{2}}(\partial \mathcal{O}) \label{2.7} \end{equation}The
last relation in turn allows us to pass to limit in (\ref{2.1.1})$_{4}$, and
we get\begin{equation} \lbrack p_{0}-(2\nu \partial
_{x_{3}}(u_{0})_{3}+\lambda \text{div}(u_{0}))]|_{\Omega }-\Delta
^{2}w_{1}=w_{2}^{\ast }\in L^{2}(\Omega ) \label{2.7.5} \end{equation}%
Lastly, from (\ref{2.5}) and (\ref{2.6}) and the Lax-Milgram Theorem, the
flow component $u_{0}=\mu _{0}+\widetilde{\mu }_{0}$ can be characterized
via the solution $\mu _{0}\in \mathbf{V}_{0}$ of the following variational
problem for all $\chi \in \mathbf{V}_{0}$: \begin{equation*} (\sigma (\mu
_{0}),\epsilon (\chi ))_{\mathcal{O}}+\eta (\mu _{0},\chi )_{\mathcal{O}%
}=-(\sigma (\widetilde{\mu }_{0}),\epsilon(\chi) )_{\mathcal{O}}-\eta (%
\widetilde{\mu }_{0},\chi )_{\mathcal{O}} \end{equation*}\begin{equation*}
+(p_{0},\text{div}(\chi ))_{\mathcal{O}}-(\mathbf{U}\cdot \nabla u_{0},\chi
)_{\mathcal{O}}-(u_{0}^{\ast },\chi )_{\mathcal{O}} \end{equation*}An
integration by parts with respect to this relation now gives for all $\chi
\in V_{0},$\begin{equation*} -(\text{div}\sigma (u_{0}),\chi )_{\mathcal{O}%
}+\eta (u_{0},\chi )_{\mathcal{O}}+\left\langle \sigma (u_{0})\mathbf{n,}%
\chi \right\rangle _{\partial \mathcal{O}} \end{equation*}\begin{equation*}
=-(\nabla p_{0},\chi )_{\mathcal{O}}+\left\langle p_{0}\mathbf{n,}\chi
\right\rangle _{\partial \mathcal{O}}-(\mathbf{U}\cdot \nabla u_{0},\chi )_{%
\mathcal{O}}-(u_{0}^{\ast },\chi )_{\mathcal{O}} \end{equation*}or after
using (\ref{2.6})\begin{equation*} \left\langle \sigma (u_{0})\mathbf{n}%
-p_{0}\mathbf{n,}\chi \right\rangle _{\partial \mathcal{O}}=0,\text{ \ for
every }\chi \in V_{0} \end{equation*}which gives in the sense of
distributions\begin{equation} \lbrack \sigma (u_{0})\mathbf{n}-p_{0}\mathbf{n%
}]\cdot \tau =0,\text{ \ }\forall \text{ }\tau \in TH^{\frac{1}{2}}(\partial 
\mathcal{O}) \label{2.8} \end{equation}Hence, the estimates (\ref{2.2})-(%
\ref{2.8}) now give the desired conclusion and completes the proof of Lemma 
\ref{closed}. \end{proof} \newline
\newline
\underline{\textbf{STEP (M-III):}} Lastly, we prove the following fact: 

\begin{lemma} \label{inv} For given $\lambda >0,$ we have the existence of a
constant $\varrho >0$ such that for all $\varphi \in D(\mathcal{A}+B)\cap
H_{N}^{\bot } $\begin{equation} \left\Vert \left\vert \lbrack \lambda I-(%
\mathcal{A}+B)]\varphi \right\vert \right\Vert _{H_{N}^{\bot }} \geq \varrho
\left\Vert \left\vert \varphi \right\vert \right\Vert _{H_{N}^{\bot }} 
\label{36} \end{equation}where the norm $\left\Vert \left\vert \cdot
\right\vert \right\Vert _{H_{N}^{\bot }}$ is defined in (\ref{specnorm}). 
\end{lemma}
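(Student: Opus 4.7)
The plan is straightforward: the estimate (\ref{36}) is the standard ``inverse bound'' extracted from dissipativity in the Lumer--Phillips framework, and essentially all of the analytical work has already been done in Lemma \ref{diss}. No genuine PDE arguments are needed beyond what is already in place.

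Specifically, given $\varphi = [p_{0}, u_{0}, w_{1}, w_{2}] \in D(\mathcal{A}+B) \cap H_{N}^{\bot}$, I would begin with the elementary identity with respect to the special inner product $((\cdot,\cdot))_{H_{N}^{\bot}}$ introduced in (\ref{innpro}):
\begin{equation*}
\text{Re}\left(\left([\lambda I - (\mathcal{A}+B)]\varphi, \varphi\right)\right)_{H_{N}^{\bot}} = \lambda \left\Vert\left\vert \varphi \right\vert\right\Vert_{H_{N}^{\bot}}^{2} - \text{Re}\left(\left((\mathcal{A}+B)\varphi, \varphi\right)\right)_{H_{N}^{\bot}}.
\end{equation*}
Invoking Lemma \ref{diss}, the dissipativity estimate (\ref{dissest}) in particular forces the second term on the RHS to be nonnegative, so
\begin{equation*}
\text{Re}\left(\left([\lambda I - (\mathcal{A}+B)]\varphi, \varphi\right)\right)_{H_{N}^{\bot}} \geq \lambda \left\Vert\left\vert \varphi \right\vert\right\Vert_{H_{N}^{\bot}}^{2}.
\end{equation*}
On the other hand, Cauchy--Schwarz with respect to $((\cdot,\cdot))_{H_{N}^{\bot}}$ gives the upper bound
\begin{equation*}
\left\vert \text{Re}\left(\left([\lambda I - (\mathcal{A}+B)]\varphi, \varphi\right)\right)_{H_{N}^{\bot}}\right\vert \leq \left\Vert\left\vert [\lambda I - (\mathcal{A}+B)]\varphi \right\vert\right\Vert_{H_{N}^{\bot}} \cdot \left\Vert\left\vert \varphi \right\vert\right\Vert_{H_{N}^{\bot}}.
\end{equation*}
Comparing these two bounds and dividing through by $\left\Vert\left\vert \varphi \right\vert\right\Vert_{H_{N}^{\bot}}$ (the case $\varphi = 0$ being trivial) delivers (\ref{36}) with explicit constant $\varrho = \lambda$.

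There is no real obstacle here; once Lemma \ref{diss} is available in the equivalent inner product, the inverse estimate is essentially free. The sharper dissipation in (\ref{dissest}) provides additional control on the $u_{0}$, $p_{0}$, and $\Delta w_{1}$ components, but these extra terms are not needed for (\ref{36}) and may simply be discarded; only the nonnegativity of $-\text{Re}(((\mathcal{A}+B)\varphi,\varphi))_{H_{N}^{\bot}}$ is used. Together with Lemma \ref{denserange} (density of range) and Lemma \ref{closed} (closedness of $\lambda I - (\mathcal{A}+B)$), this inverse estimate will then allow Lemma \ref{pazy} in the Appendix to yield the required range condition (\ref{range}), thus completing the proof of the maximality statement in Lemma \ref{md}.
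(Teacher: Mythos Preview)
Your argument is correct and, in fact, cleaner than the paper's. Both proofs start from the same place: expand $\text{Re}\big(\big([\lambda I-(\mathcal{A}+B)]\varphi,\varphi\big)\big)_{H_{N}^{\bot}}$ and invoke the dissipativity estimate (\ref{dissest}). You then immediately discard the extra dissipation terms (the $u_{0}$, $p_{0}$, $\Delta w_{1}$ pieces), apply Cauchy--Schwarz, and read off $\varrho=\lambda$. The paper instead \emph{retains} those extra terms and spends roughly a page manipulating $\|u_{0}\|_{\mathcal{O}}^{2}$ via add--subtract tricks and the mapping estimates (\ref{liftnorm}), (\ref{psireg}) in order to reconstitute the special-norm pieces; after all this it arrives at $\varrho=\min\{\epsilon/4,\lambda\}$, which is no better than (and generally strictly worse than) your constant. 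Your observation that only the sign of $-\text{Re}(((\mathcal{A}+B)\varphi,\varphi))_{H_{N}^{\bot}}$ is needed is exactly the point; the paper's additional work here buys nothing for Lemma \ref{inv} itself. The extra dissipation in (\ref{dissest}) is genuinely useful elsewhere (e.g., in the closedness argument of Lemma \ref{closed}, where the $u_{0}$-bound in $H^{1}$ is essential), but for the bare inverse estimate your shortcut is the natural route.
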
 \begin{proof} Using the estimate (\ref{dissest}) in Lemma \ref{%
diss}, we have for given $\lambda >0,$\begin{equation*} \left( \left(
\lbrack \lambda I-(\mathcal{A}+B)]\varphi ,\varphi \right) \right)
_{H_{N}^{\bot }} \end{equation*}\begin{equation*} \geq \lambda \left\Vert
\left\vert \varphi \right\vert \right\Vert _{H_{N}^{\bot
}}^{2}+C_{1}\left\Vert u_{0}\right\Vert _{H^{1}(\mathcal{O})}^{2}+\frac{%
\epsilon }{2}\left[ \left\Vert p_{0}\right\Vert _{\mathcal{O}%
}^{2}+\left\Vert \Delta w_{1}\right\Vert _{\Omega }^{2}\right] \end{equation*%
}\begin{equation} \geq \lambda \left\Vert \left\vert \varphi \right\vert
\right\Vert _{H_{N}^{\bot }}^{2}+(C_{1}-\frac{\epsilon }{2})\left\Vert
u_{0}\right\Vert _{H^{1}(\mathcal{O})}^{2}+\frac{\epsilon }{2}\left[
\left\Vert p_{0}\right\Vert _{\mathcal{O}}^{2}+\left\Vert u_{0}\right\Vert _{%
\mathcal{O}}^{2}+\left\Vert \Delta w_{1}\right\Vert _{\Omega }^{2}\right] 
\label{38} \end{equation}With respect to the RHS: we firstly add and
subtract, so as to have\begin{equation*} \left\Vert u_{0}\right\Vert _{%
\mathcal{O}}^{2}=\left\Vert \left[ u_{0}-\alpha D(g\cdot \nabla
w_{1})e_{3}+\xi \nabla \psi (p_{0},w_{1})\right] +\alpha D(g\cdot \nabla
w_{1})e_{3}-\xi \nabla \psi (p_{0},w_{1})\right\Vert _{\mathcal{O}}^{2} \end{%
equation*}\begin{equation*} =\left\Vert \left[ u_{0}-\alpha D(g\cdot \nabla
w_{1})e_{3}+\xi \nabla \psi (p_{0},w_{1})\right] \right\Vert _{\mathcal{O}%
}^{2} \end{equation*}\begin{equation*} +2\text{Re}\left( u_{0}-\alpha
D(g\cdot \nabla w_{1})e_{3}+\xi \nabla \psi (p_{0},w_{1}),\alpha D(g\cdot
\nabla w_{1})e_{3}-\xi \nabla \psi (p_{0},w_{1})\right) _{\mathcal{O}} \end{%
equation*}\begin{equation} +\left\Vert \alpha D(g\cdot \nabla
w_{1})e_{3}-\xi \nabla \psi (p_{0},w_{1})\right\Vert _{\mathcal{O}}^{2} 
\label{39} \end{equation}By using Holder-Young Inequalities we get\begin{%
equation*} \left\Vert u_{0}\right\Vert _{\mathcal{O}}^{2}\geq (1-\delta
)\left\Vert u_{0}-\alpha D(g\cdot \nabla w_{1})e_{3}+\xi \nabla \psi
(p_{0},w_{1})\right\Vert _{\mathcal{O}}^{2} \end{equation*}\begin{equation}
+(1-C_{\delta })\left\Vert \alpha D(g\cdot \nabla w_{1})e_{3}-\xi \nabla
\psi (p_{0},w_{1})\right\Vert _{\mathcal{O}}^{2} \label{40} \end{equation}%
Using the boundedness of the maps $D(\cdot )$ and $\psi (\cdot ,\cdot )$
defined in (\ref{liftnorm}) and (\ref{psireg}), respectively we then have 
\begin{equation*} \left\Vert u_{0}\right\Vert _{\mathcal{O}}^{2}\geq
(1-\delta )\left\Vert u_{0}-\alpha D(g\cdot \nabla w_{1})e_{3}+\xi \nabla
\psi (p_{0},w_{1})\right\Vert _{\mathcal{O}}^{2} \end{equation*}\begin{%
equation} +C_{2}(1-C_{\delta })\left[ \left\Vert \mathbf{U}\right\Vert
_{\ast }^{2}+\xi ^{2}\right] \left\Vert \Delta w_{1}\right\Vert _{\Omega
}^{2} \label{41} \end{equation}Now, applying (\ref{41}) to the RHS of (\ref{%
38}), we get\begin{equation*} \left( \left( \lbrack \lambda I-(\mathcal{A}%
+B)]\varphi ,\varphi \right) \right) _{H_{N}^{\bot }}\geq \lambda \left\Vert
\left\vert \varphi \right\vert \right\Vert _{H_{N}^{\bot }}^{2}+(C_{1}-\frac{%
\epsilon }{2})\left\Vert u_{0}\right\Vert _{H^{1}(\mathcal{O})}^{2} \end{%
equation*}\begin{equation*} +\frac{\epsilon }{2}\{\left\Vert
p_{0}\right\Vert _{\mathcal{O}}^{2}+(1-\delta )\left\Vert u_{0}-\alpha
D(g\cdot \nabla w_{1})e_{3}+\xi \nabla \psi (p_{0},w_{1})\right\Vert _{%
\mathcal{O}}^{2} \end{equation*}\begin{equation} +\left[ 1+C_{2}(1-C_{\delta
})\left[ \left\Vert \mathbf{U}\right\Vert _{\ast }^{2}+\xi ^{2}\right] %
\right] \left\Vert \Delta w_{1}\right\Vert _{\Omega }^{2}\} \label{42} \end{%
equation}If we take now$\left\Vert \mathbf{U}\right\Vert _{\ast }$ so small
such that \begin{equation*} \left\Vert \mathbf{U}\right\Vert _{\ast
}^{2}+\xi ^{2}<\frac{1}{2C_{2}(C_{\delta }-1)}, \end{equation*}we then have%
\begin{equation*} \left( \left( \lbrack \lambda I-(\mathcal{A}+B)]\varphi
,\varphi \right) \right) _{H_{N}^{\bot }}\geq \lambda \left\Vert \left\vert
\varphi \right\vert \right\Vert _{H_{N}^{\bot }}^{2}+(C_{1}-\frac{\epsilon }{%
2})\left\Vert u_{0}\right\Vert _{H^{1}(\mathcal{O})}^{2} \end{equation*}%
\begin{equation*} +\frac{\epsilon }{2}\left\{ \left\Vert p_{0}\right\Vert _{%
\mathcal{O}}^{2}+(1-\delta )\left\Vert u_{0}-\alpha D(g\cdot \nabla
w_{1})e_{3}+\xi \nabla \psi (p_{0},w_{1})\right\Vert _{\mathcal{O}}^{2}+%
\frac{1}{2}\left\Vert \Delta w_{1}\right\Vert _{\Omega }^{2}\right\} \end{%
equation*}\begin{equation*} \geq \frac{\epsilon }{2}\left\{ \left\Vert
p_{0}\right\Vert _{\mathcal{O}}^{2}+(1-\delta )\left\Vert u_{0}-\alpha
D(g\cdot \nabla w_{1})e_{3}+\xi \nabla \psi (p_{0},w_{1})\right\Vert _{%
\mathcal{O}}^{2}+\frac{1}{2}\left\Vert \Delta w_{1}\right\Vert _{\Omega
}^{2}\right\} \end{equation*}\begin{equation} +\lambda \left\Vert
w_{2}+h_{\alpha }\cdot \nabla w_{1}+\xi w_{1}\right\Vert _{\mathcal{O}}^{2} 
\label{45} \end{equation}Using Cauchy-Schwarz now we obtain\begin{equation*}
\left\Vert \left\vert \lbrack \lambda I-(\mathcal{A}+B)]\varphi \right\vert
\right\Vert _{H_{N}^{\bot }}\left\Vert \left\vert \varphi \right\vert
\right\Vert _{H_{N}^{\bot }} \end{equation*}\begin{equation*} \geq \frac{%
\epsilon }{2}\left\{ \left\Vert p_{0}\right\Vert _{\mathcal{O}%
}^{2}+(1-\delta )\left\Vert u_{0}-\alpha D(g\cdot \nabla w_{1})e_{3}+\xi
\nabla \psi (p_{0},w_{1})\right\Vert _{\mathcal{O}}^{2}+\frac{1}{2}%
\left\Vert \Delta w_{1}\right\Vert _{\Omega }^{2}\right\} \end{equation*}%
\begin{equation} +\lambda \left\Vert w_{2}+h_{\alpha }\cdot \nabla w_{1}+\xi
w_{1}\right\Vert _{\mathcal{O}}^{2} \label{46} \end{equation}which gives the
desired estimate (\ref{36}), with therein \begin{equation*} \varrho =\min
\left\{ \frac{\epsilon }{4},\lambda \right\} \end{equation*}and finishes the
proof of Lemma \ref{inv}. \end{proof} Now, combining Lemma \ref{denserange},
Lemma \ref{closed}\ and Lemma \ref{inv} gives that the map $[\lambda I-(%
\mathcal{A}+B)]$ satisfies the requirements of Lemma \ref{pazy} in Appendix
which, in turn, yields that \begin{equation*} \lbrack \lambda I-(\mathcal{A}%
+B)]^{-1}\in \mathcal{L}(H_{N}^{\bot }) \end{equation*}and the range
condition (\ref{range}) holds. This finishes the proof of Lemma \ref{md}. 

By Lemma \ref{diss} and Lemma \ref{md}, we have the desired contraction semigroup generation with respect to the special inner product $((\cdot ,\cdot))_{H_{N}^{\bot }}.$ Hence we have the asserted wellposedness statement of Theorem \ref{wp}. 

Moreover, form the values of the parameters $\alpha$ and $\xi$ in (\ref{alpha}) and (\ref{33}), respectively, as well as the definition of $((\cdot ,\cdot))_{H_{N}^{\bot }}$ in (\ref{innpro}), we infer that ${e^{(\mathcal{A}+B)t}}$ is uniformly bounded in time, in the standard $\mathcal{H}-$norm. In fact, given $\phi^{\ast}=[p^*,u^*,w_1^*,w_2^*]\in H_{N}^{\bot },$ set
\begin{equation}
\phi(t)=\left[ 
\begin{array}{c}
p(t) \\ 
u(t) \\ 
w_{1}(t) \\ 
w_{2}(t)%
\end{array}%
\right] =e^{(\mathcal{A}+B)t}\left[ 
\begin{array}{c}
p^{*} \\ 
u^{*} \\ 
w_1^* \\ 
w_2^*%
\end{array}%
\right] 
\end{equation}
Then,
\begin{equation*}
\left\Vert \phi(t) \right\Vert _{\mathcal{H}}^2=\left\Vert p\right\Vert _{\mathcal{O}}^{2}+\left\Vert u\right\Vert _{\mathcal{O}}^{2}+\left\Vert \Delta w_1\right\Vert _{\Omega}^{2}+\left\Vert w_2\right\Vert _{\Omega}^{2}
\end{equation*}
\begin{equation*}
\leq C\Big[\left\Vert p\right\Vert _{\mathcal{O}}^{2}+ \left\Vert u-\alpha
D(g\cdot \nabla w_{1})e_{3}+\xi \nabla \psi (p,w_{1})\right\Vert _{%
\mathcal{O}}^{2}+\alpha^2 \left\Vert 
D(g\cdot \nabla w_{1})e_{3}\right\Vert _{%
\mathcal{O}}^{2}
\end{equation*}
\begin{equation*}
+\xi^2 \left\Vert \nabla \psi (p,w_{1})\right\Vert _{%
\mathcal{O}}^{2}+\left\Vert \Delta w_1\right\Vert _{\Omega}^{2}+\left\Vert w_{2}+h_{\alpha }\cdot \nabla w_{1}+\xi w_{1}\right\Vert
_{\Omega }^{2} +\left\Vert h_{\alpha }\cdot \nabla w_{1}+\xi w_{1}\right\Vert
_{\Omega }^{2} \Big]
\end{equation*}
\begin{equation*}
\leq C\Big[\left\Vert \left\vert  e^{(\mathcal{A}+B)t}\phi^{\ast} \right\vert \right\Vert _{H_{N}^{\bot }}^2+\alpha^2 \left\Vert D(g\cdot \nabla w_{1})e_{3}\right\Vert _{%
\mathcal{O}}^{2}+\xi^2 \left\Vert \nabla \psi (p,w_{1})\right\Vert _{%
\mathcal{O}}^{2} +\left\Vert h_{\alpha }\cdot \nabla w_{1}+\xi w_{1}\right\Vert
_{\Omega }^{2} \Big].
\end{equation*}
Using the fact that $e^{(\mathcal{A}+B)t}$ is a contraction semigroup on $H_{N}^{\bot }$ with respect to the norm $\left\Vert \left\vert  \cdot \right\vert \right\Vert _{H_{N}^{\bot }},$ then combining this fact with (\ref{specnorm}), we have
 \begin{equation*}
\left\Vert \phi(t) \right\Vert _{\mathcal{H}}^2\leq  C[\left\Vert \mathbf{U}\right\Vert _{\ast
}^{2}+\xi ^{2}]\left\Vert \phi(t) \right\Vert _{\mathcal{H}}^2+C_1 \left\Vert \phi^* \right\Vert _{\mathcal{H}}^2
\end{equation*}
For $\left\Vert \mathbf{U}\right\Vert _{\ast}$ small enough, we then have 
 \begin{equation*}
\left\Vert \phi(t) \right\Vert _{\mathcal{H}}\leq  C^* \left\Vert \phi^* \right\Vert _{\mathcal{H}},\text{ \ \ \ for all \ } t>0.
\end{equation*}
This concludes the proof of Theorem \ref{wp}.
\section{Appendix}

In this section we will provide some useful lemmas that are critically used
in this manuscript. In reference to problem (\ref{1})-(\ref{3}), we start with defining the adjoint operator $(\mathcal{A}+B)^{\ast }:$ $D((\mathcal{A}%
+B)^{\ast })\cap H_{N}^{\bot }\subset H_{N}^{\bot }\rightarrow H_{N}^{\bot }$ of the semigroup generator $\mathcal{A}+B$ in the following lemma:

\begin{lemma}
\label{adj} The adjoint operator of the generator $(\mathcal{A}+B)$ (given via
(\ref{AAA})-(\ref{feedbackB})) is defined as%
\begin{equation*}
(\mathcal{A}+B)^{\ast }=\mathcal{A}^{\ast }+B^{\ast }
\end{equation*}%
\begin{equation*}
=\left[ 
\begin{array}{cccc}
\mathbf{U}\mathbb{\cdot }\nabla (\cdot ) & \text{div}(\cdot ) & 0 & 0 \\ 
\mathbb{\nabla (\cdot )} & \text{div}\sigma (\cdot )-\eta I+\mathbf{U}%
\mathbb{\cdot \nabla (\cdot )} & 0 & 0 \\ 
0 & 0 & 0 & -I \\ 
-\left[ \cdot \right] _{\Omega } & -\left[ 2\nu \partial _{x_{3}}(\cdot
)_{3}+\lambda \text{div}(\cdot )\right] _{\Omega } & \Delta ^{2} & 0%
\end{array}%
\right] 
\end{equation*}%
\begin{equation*}
+\left[ 
\begin{array}{cccc}
\text{div}(\mathbf{U)}(\cdot ) & 0 & 0 & 0 \\ 
0 & \text{div}(\mathbf{U)}(\cdot ) & 0 & 0 \\ 
{{\mathring{A}}^{-1}}\left\{ \text{div}{{([U}_{1},U_{2}]{)+\mathbf{U\cdot }%
\nabla )}}\right\} {(\cdot )|}_{\Omega } & {{\mathring{A}}^{-1}\left\{ \text{%
div}{{[U}_{1},U_{2}]{+\mathbf{U\cdot }\nabla }}\right\} }\left[ 2\nu
\partial _{x_{3}}(\cdot )_{3}+\lambda \text{div}(\cdot )\right] _{\Omega } & 
0 & 0 \\ 
0 & 0 & 0 & 0%
\end{array}%
\right] 
\end{equation*}%
\begin{equation*}
+\left[ 
\begin{array}{cccc}
-\text{div}(\mathbf{U)(\cdot )} & 0 & 0 & 0 \\ 
0 & 0 & 0 & 0 \\ 
0 & 0 & -{{\mathring{A}}^{-1}\left\{ (\text{div}{{[U}_{1},U_{2}]{+\mathbf{%
U\cdot }\nabla )}\Delta }^{2}(\cdot)\right\} +}\mathbf{U}\mathbb{\cdot }%
\nabla (\cdot)+\Delta {{\mathring{A}}^{-1}\nabla }^{\ast }(\mathbb{\nabla
\cdot }(\mathbf{U}\mathbb{\cdot }\nabla (\cdot))\mathbb{)} & 0 \\ 
0 & 0 & 0 & 0%
\end{array}%
\right] 
\end{equation*}%
\begin{equation}
=L_{1}+L_{2}+B^{\ast }  \label{adj-AplusB}
\end{equation}%
Here, $\nabla ^{\ast }\in \mathcal{L}(L^{2}(\Omega ),[H^{1}(\Omega
)]^{^{\prime }})$ is the adjoint of the gradient operator $\nabla \in 
\mathcal{L}(H^{1}(\Omega ),L^{2}(\Omega ))$ and the domain of $(\mathcal{A%
}+B)^{\ast }|_{H_{N}^{\bot }}$ is given as 
\begin{equation*}
D((\mathcal{A}+B)^{\ast })\cap H_{N}^{\bot }=\{(p_{0},u_{0},w_{1},w_{2})\in
L^{2}(\mathcal{O})\times \mathbf{H}^{1}(\mathcal{O})\times H_{0}^{2}(\Omega
)\times L^{2}(\Omega )~:~\text{properties }\mathbf{(A^{\ast }.i)}\text{--}\mathbf{(A^{\ast }.vii)}~~\text{hold}%
\},
\end{equation*}%
where
\end{lemma}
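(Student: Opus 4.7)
The plan is to compute the adjoint directly from the defining relation
\[
((\mathcal{A}+B)\phi,\varphi)_{\mathcal{H}} = (\phi,(\mathcal{A}+B)^{\ast}\varphi)_{\mathcal{H}}
\qquad \forall\,\phi\in D(\mathcal{A}+B),\ \varphi\in D((\mathcal{A}+B)^{\ast}),
\]
expanded componentwise over $\phi=[p,u,w_1,w_2]$ and $\varphi=[\widetilde p,\widetilde u,\widetilde w_1,\widetilde w_2]$. First I would treat the two fluid slots in isolation: apply Green's identity to $-\mathbf{U}\cdot\nabla p-\text{div}\,u-\text{div}(\mathbf{U})p$ against $\widetilde p$, and to $-\nabla p+\text{div}\,\sigma(u)-\eta u-\mathbf{U}\cdot\nabla u$ against $\widetilde u$. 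Using the tangential stress condition $(\sigma(u)\mathbf{n}-p\mathbf{n})\cdot\boldsymbol{\tau}=0$ on $\partial\mathcal{O}$ and $\mathbf{U}\cdot\mathbf{n}=0$ there, the volume terms transpose to $\mathbf{U}\cdot\nabla\widetilde p+\text{div}\,\widetilde u$ in the pressure slot and $\nabla\widetilde p+\text{div}\,\sigma(\widetilde u)-\eta\widetilde u+\mathbf{U}\cdot\nabla\widetilde u+\text{div}(\mathbf{U})\widetilde u$ in the velocity slot, exactly recovering the top two rows of $L_1$ together with the diagonal $\text{div}(\mathbf{U})$ contribution in $L_2$. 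The $-\text{div}(\mathbf{U})(\cdot)$ sitting in the top-left corner of $B^{\ast}$ is simply the formal self-adjoint-up-to-sign of the multiplication operator in the top-left corner of $B$.

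Next I would handle the plate slots. The term $(\Delta^2 w_1,\widetilde w_2)_{\Omega}$ from the fourth row of $\mathcal{A}$, after two integrations by parts against $\widetilde w_1\in H_0^2(\Omega)$, produces $(w_1,\Delta^2\widetilde w_1)_{\Omega}$, giving the $\Delta^2$ appearing in the $(4,3)$-slot of $L_1$. The kinematic coupling $u\cdot\mathbf{n}=w_2+\mathbf{U}\cdot\nabla w_1$ on $\Omega$ and the trace load $-[2\nu\partial_{x_3}(u)_3+\lambda\,\text{div}(u)-p]_{\Omega}$ generate boundary integrals on $\Omega$ that couple the fluid slots with the plate slots. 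The trick is that these boundary integrals cannot be absorbed into $L^2(\Omega)$ directly through a Hilbert-space transposition, so I would invert them through the isomorphism $\mathring{A}^{-1}:L^2(\Omega)\to D(\mathring{A})\subset H_0^2(\Omega)$: writing $(f,g)_{\Omega}=(\Delta\mathring{A}^{-1}f,\Delta g)_{\Omega}$ for $g\in H_0^2(\Omega)$ recasts the fluid-trace/plate-displacement cross-terms into the form of an $H_0^2$-inner product, which is precisely what spawns the $\mathring{A}^{-1}\{\cdots\}$ entries in the third row of $L_2$ and of $B^{\ast}$. In the same way the material-derivative boundary contribution $\mathbf{U}\cdot\nabla w_1$ in the $u\cdot\mathbf{n}$ condition, once integrated by parts against $\widetilde u$ on $\Omega$, yields the operator $\mathring{A}^{-1}\{\text{div}[U_1,U_2]+\mathbf{U}\cdot\nabla\}$ acting on the fluid traces, matching the $(3,1)$- and $(3,2)$-entries of $L_2$.

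The feedback block $B$ contributes the remaining pieces. Its pressure entry $-\text{div}(\mathbf{U})(\cdot)$ is self-transposed (already absorbed above). For the plate entry $\mathbf{U}\cdot\nabla w_1$ in $B$: integrating $(\mathbf{U}\cdot\nabla w_1,\widetilde w_1)_{H_0^2(\Omega)}=(\Delta(\mathbf{U}\cdot\nabla w_1),\Delta\widetilde w_1)_{\Omega}$ by parts twice, and then expressing the result through the $L^2(\Omega)$-pairing via $\mathring{A}^{-1}$ and $\nabla^{\ast}$, produces the awkward term
\[
-\mathring{A}^{-1}\{(\text{div}[U_1,U_2]+\mathbf{U}\cdot\nabla)\Delta^2(\cdot)\}+\mathbf{U}\cdot\nabla(\cdot)+\Delta\mathring{A}^{-1}\nabla^{\ast}(\nabla\cdot(\mathbf{U}\cdot\nabla(\cdot)))
\]
in the $(3,3)$-slot of $B^{\ast}$; the three summands correspond respectively to transposing $\mathbf{U}\cdot\nabla$ through $\Delta$, a leftover volume term, and the commutator from $\nabla^{\ast}$ of $\nabla\cdot(\mathbf{U}\cdot\nabla(\cdot))$ needed to re-express everything modulo $\mathring{A}$ in the $L^2(\Omega)$ pairing. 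Collecting the three blocks gives $(\mathcal{A}+B)^{\ast}=L_1+L_2+B^{\ast}$ as claimed.

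Finally, the domain $D((\mathcal{A}+B)^{\ast})\cap H_N^{\bot}$ is read off from the integration-by-parts procedure: each property $\mathbf{(A^{\ast}.i)}$--$\mathbf{(A^{\ast}.vii)}$ records precisely which boundary terms must vanish and which interior regularities are required for the above formal transpositions to be rigorous on the corner domain $\mathcal{O}$ under Condition \ref{cond}. I expect the main technical obstacle to be the bookkeeping for the $\Omega$-boundary integrals that couple fluid traces to plate data: ensuring that every trace of $u$ and $p$ that shows up after integration by parts can be legitimately dualized against $H_0^2(\Omega)$-elements through the $\mathring{A}^{-1}$-device, and confirming that the resulting operator indeed preserves $H_N^{\bot}$, is where the calculation is delicate. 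The rest is symbolic transposition.
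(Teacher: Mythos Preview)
Your proposal is correct and follows essentially the same route as the paper: direct computation of $((\mathcal{A}+B)\phi,\widetilde\varphi)_{\mathcal{H}}$ by integration by parts, with the $\mathring{A}^{-1}$ device used to recast the $\Omega$-boundary couplings into the $H_0^2(\Omega)$-inner product, and a separate treatment of the $B$-block where the clamped conditions $w_1=\partial_\nu w_1=0$ yield $\partial_\nu(\mathbf{U}\cdot\nabla w_1)=(\mathbf{U}\cdot\nu)\Delta w_1|_{\partial\Omega}$ before re-expressing everything through $\mathring{A}^{-1}$ and $\nabla^\ast$. One small bookkeeping slip: the $\Delta^2$ in the $(4,3)$-slot of $L_1$ does not arise from $(\Delta^2 w_1,\widetilde w_2)_\Omega$ but from the third-row term $(\Delta w_2,\Delta\widetilde w_1)_\Omega=(w_2,\Delta^2\widetilde w_1)_\Omega$; the term you named instead produces $-(\Delta w_1,\Delta\widetilde w_2)_\Omega$, which feeds the $(3,4)$-slot $-I$ of $L_1$.
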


\begin{enumerate}
\item $\mathbf{(A^{\ast }.i)}$ $\mathbf{U}\cdot \nabla p_{0}\in L^{2}(\mathcal{O})$

\item $\mathbf{(A^{\ast }.ii)}$ $\text{div}~\sigma (u_{0})+\nabla p_{0}\in \mathbf{L}^{2}(%
\mathcal{O})$ (So, $\left[ \sigma (u_{0})\mathbf{n}+p_{0}\mathbf{n}\right]
_{\partial \mathcal{O}}\in \mathbf{H}^{-\frac{1}{2}}(\partial \mathcal{O})$)

\item $ \mathbf{(A^{\ast }.iii)}$ $\Delta ^{2}w_{1}-\left[ 2\nu \partial
_{x_{3}}(u_{0})_{3}+\lambda \text{div}(u_{0})\right] _{\Omega
}-p_{0}|_{\Omega }\in L^{2}(\Omega )$

\item $\mathbf{(A^{\ast }.iv)}$ $\left( \sigma (u_{0})\mathbf{n}+p_{0}\mathbf{n}\right) \bot
~TH^{1/2}(\partial \mathcal{O})$. That is, 
\begin{equation*}
\left\langle \sigma (u_{0})\mathbf{n}+p_{0}\mathbf{n},\mathbf{\tau }%
\right\rangle _{\mathbf{H}^{-\frac{1}{2}}(\partial \mathcal{O})\times 
\mathbf{H}^{\frac{1}{2}}(\partial \mathcal{O})}=0\text{ \ in }\mathcal{D}%
^{\prime }(\mathcal{O})\text{\ for every }\mathbf{\tau }\in
TH^{1/2}(\partial \mathcal{O})
\end{equation*}

\item $\mathbf{(A^{\ast }.v)}$ The flow velocity component $u_{0}=\mathbf{f}_{0}+\widetilde{%
\mathbf{f}}_{0}$, where $\mathbf{f}_{0}\in \mathbf{V}_{0}$ and $\widetilde{%
\mathbf{f}}_{0}\in \mathbf{H}^{1}(\mathcal{O})$ satisfies
\begin{equation*}
\widetilde{\mathbf{f}}_{0}=%
\begin{cases}
0 & ~\text{ on }~S \\ 
w_{2}\mathbf{n} & ~\text{ on}~\Omega%
\end{cases}%
\end{equation*}%
\noindent (and so $\left. \mathbf{f}_{0}\right\vert _{\partial \mathcal{O}%
}\in TH^{1/2}(\partial \mathcal{O})$)

\item $\mathbf{(A^{\ast }.vi)}$ $[-w_{2}+\mathbf{U}\mathbb{\cdot }\nabla w_{1}+\Delta {{\mathring{A}}%
^{-1}\nabla }^{\ast }(\mathbb{\nabla \cdot }(\mathbf{U}\mathbb{\cdot }\nabla
w_{1})\mathbb{)}]\in H_{0}^{2}(\Omega ),$ \ (and so $w_{2}\in
H_{0}^{1}(\Omega )$)

\item $\mathbf{(A^{\ast }.vii)}$ $\int\limits_{\mathcal{O}}[\mathbf{U}\cdot \nabla p_{0}+$div$%
~(u_{0})]d\mathcal{O}$\newline
$+\int\limits_{\Omega }{{\mathring{A}}^{-1}}\left\{ (%
\text{div}{[U}_{1},U_{2}{]+\mathbf{U\cdot }\nabla )(\left[ p_{0}+2\nu
\partial _{x_{3}}(u_0 )_{3}+\lambda \text{div}(u_{0})\right] _{\Omega })}%
\right\} d\Omega $\newline
$-\int\limits_{\Omega}{{\mathring{A}}^{-1}\left\{ (\text{div}{{[U%
}_{1},U_{2}]{+\mathbf{U\cdot }\nabla )}\Delta }^{2}{w}_{1}\right\} }d\Omega $%
\newline
$+\int\limits_{\Omega}[\mathbf{U}\mathbb{\cdot }\nabla
w_{1}+\Delta {{\mathring{A}}^{-1}\nabla }^{\ast }(\mathbb{\nabla \cdot }(%
\mathbf{U}\mathbb{\cdot }\nabla w_{1})\mathbb{)]}d\Omega $\newline
$=0.$\newline
\end{enumerate}

\begin{proof}
Let $\varphi =\left[ p_{0},u_{0},w_{1},w_{2}\right] \in D(\mathcal{A}+B)\cap H_{N}^{\bot },$ $%
\widetilde{\varphi }=\left[ \widetilde{p}_{0},\widetilde{u}_{0},\widetilde{w}%
_{1},\widetilde{w}_{2}\right] \in D(\mathcal{A}+B)^{\ast }\cap H_{N}^{\bot }.
$ Then, we have%
\begin{equation*}
\left( \mathcal{A}\varphi ,\widetilde{\varphi }\right) _{\mathcal{H}}=-(%
\mathbf{U}\nabla p_{0},\widetilde{p}_{0})_{\mathcal{O}}-(\text{div}(u_{0}),%
\widetilde{p}_{0})_{\mathcal{O}}-(\nabla p_{0},\widetilde{u}_{0})_{\mathcal{O%
}}
\end{equation*}%
\begin{equation*}
+(\text{div}\sigma (u_{0}),\widetilde{u}_{0})_{\mathcal{O}}-\eta (u_{0},%
\widetilde{u}_{0})_{\mathcal{O}}-(\mathbf{U}\nabla u_{0},\widetilde{u}_{0})_{%
\mathcal{O}}
\end{equation*}%
\begin{equation*}
+(\Delta w_{2},\Delta \widetilde{w}_{1})_{\Omega }+(p_{0}|_{\Omega }-\left[
2\nu \partial _{x_{3}}(u_{0})_{3}+\lambda \text{div}(u_{0})\right] |_{\Omega
},\widetilde{w}_{2})_{\Omega }-(\Delta ^{2}w_{1},\widetilde{w}_{2})_{\Omega }
\end{equation*}%
\begin{equation*}
=(p_{0},\text{div}(\mathbf{U)}\widetilde{p}_{0})_{\mathcal{O}}+(p_{0},%
\mathbf{U}\nabla \widetilde{p}_{0})_{\mathcal{O}}-\left\langle u_{0}\cdot 
\mathbf{n,}\widetilde{p}_{0}\right\rangle _{\partial \mathcal{O}%
}+(u_{0},\nabla \widetilde{p}_{0})_{\mathcal{O}}
\end{equation*}%
\begin{equation*}
+(p_{0},\text{div}(\widetilde{u}_{0}))_{\mathcal{O}}-\left\langle p_{0}%
\mathbf{,}\widetilde{u}_{0}\cdot \mathbf{n}\right\rangle _{\partial \mathcal{%
O}}-(\sigma (u_{0}),\epsilon (\widetilde{u}_{0}))_{\mathcal{O}}
\end{equation*}%
\begin{equation*}
+\left\langle \sigma (u_{0})\cdot \mathbf{n},\widetilde{u}_{0}\right\rangle
_{\partial \mathcal{O}}-\eta (u_{0},\widetilde{u}_{0})_{\mathcal{O}}
\end{equation*}%
\begin{equation*}
+(u_{0},\text{div}(\mathbf{U)}\widetilde{u}_{0})_{\mathcal{O}}+(u_{0},%
\mathbf{U}\nabla \widetilde{u}_{0})_{\mathcal{O}}+(\Delta w_{2},\Delta 
\widetilde{w}_{1})_{\Omega }
\end{equation*}%
\begin{equation*}
-(\left[ 2\nu \partial _{x_{3}}(u_{0})_{3}+\lambda \text{div}(u_{0})\right]
|_{\Omega }-p_{0}|_{\Omega },\widetilde{w}_{2})_{\Omega }-(\Delta w_{1},\Delta \widetilde{w}%
_{2})_{\Omega }.
\end{equation*}%
Using the domain criterion $\textbf{(A.vi)},$ we then have from the above equality%
\begin{equation*}
\left( \mathcal{A}\varphi ,\widetilde{\varphi }\right) _{\mathcal{H}}=(p_{0},%
\text{div}(\mathbf{U)}\widetilde{p}_{0})_{\mathcal{O}}+(p_{0},\mathbf{U}%
\nabla \widetilde{p}_{0})_{\mathcal{O}}
\end{equation*}%
\begin{equation*}
-(w_{2}+\mathbf{U}\nabla w_{1},\widetilde{p}_{0})_{\Omega }+(u_{0},\nabla 
\widetilde{p}_{0})_{\mathcal{O}}+(p_{0},\text{div}(\widetilde{u}_{0}))_{%
\mathcal{O}}
\end{equation*}%
\begin{equation*}
-(\sigma (u_{0}),\epsilon (\widetilde{u}_{0}))_{\mathcal{O}}-\eta (u_{0},%
\widetilde{u}_{0})_{\mathcal{O}}+(u_{0},\text{div}(\mathbf{U)}\widetilde{u}%
_{0})_{\mathcal{O}}+(u_{0},\mathbf{U}\nabla \widetilde{u}_{0})_{\mathcal{O}}
\end{equation*}%
\begin{equation*}
+(w_{2},\Delta ^{2}\widetilde{w}_{1})_{\Omega }-(\Delta w_{1},\Delta 
\widetilde{w}_{2})_{\Omega }.
\end{equation*}%
Subsequently, integrating by parts in the third line of the last relation,
we get%
\begin{equation*}
\left( \mathcal{A}\varphi ,\widetilde{\varphi }\right) _{\mathcal{H}}=(p_{0},%
\text{div}(\mathbf{U)}\widetilde{p}_{0})_{\mathcal{O}}+(p_{0},\mathbf{U}%
\nabla \widetilde{p}_{0})_{\mathcal{O}}
\end{equation*}%
\begin{equation*}
-(w_{2}+\mathbf{U}\nabla w_{1},\widetilde{p}_{0})_{\Omega }+(u_{0},\nabla 
\widetilde{p}_{0})_{\mathcal{O}}+(p_{0},\text{div}(\widetilde{u}_{0}))_{%
\mathcal{O}}
\end{equation*}%
\begin{equation*}
+(u_{0},\text{div}\sigma (\widetilde{u}_{0}))_{\mathcal{O}}-\left\langle
u_{0},\sigma (\widetilde{u}_{0})\cdot \mathbf{n}\right\rangle _{\partial 
\mathcal{O}}-\eta (u_{0},\widetilde{u}_{0})_{\mathcal{O}}
\end{equation*}%
\begin{equation*}
+(u_{0},\text{div}(\mathbf{U)}\widetilde{u}_{0})_{\mathcal{O}}+(u_{0},%
\mathbf{U}\nabla \widetilde{u}_{0})_{\mathcal{O}}
\end{equation*}%
\begin{equation*}
+(w_{2},\Delta ^{2}\widetilde{w}_{1})_{\Omega }-(\Delta w_{1},\Delta 
\widetilde{w}_{2})_{\Omega }.
\end{equation*}%
Now, integrating by parts in the second line, and using again domain
criterion $\textbf{(A.vi)},$ we have%
\begin{equation*}
\left( \mathcal{A}\varphi ,\widetilde{\varphi }\right) _{\mathcal{H}}=(p_{0},%
\text{div}(\mathbf{U)}\widetilde{p}_{0})_{\mathcal{O}}+(p_{0},\mathbf{U}%
\nabla \widetilde{p}_{0})_{\mathcal{O}}
\end{equation*}%
\begin{equation*}
-(w_{2},\left[ \widetilde{p}_{0}+2\nu \partial _{x_{3}}(\widetilde{u}%
_{0})_{3}+\lambda \text{div}(\widetilde{u}_{0})\right] |_{\Omega })_{\Omega }
\end{equation*}%
\begin{equation*}
+(w_{1},(\text{div}[U_{1},U_{2}]+\mathbf{U}\nabla )\left[ \widetilde{p}%
_{0}+2\nu \partial _{x_{3}}(\widetilde{u}_{0})_{3}+\lambda \text{div}(%
\widetilde{u}_{0})\right] |_{\Omega })_{\Omega }
\end{equation*}%
\begin{equation*}
+(u_{0},\nabla \widetilde{p}_{0})_{\mathcal{O}}+(p_{0},\text{div}(\widetilde{%
u}_{0}))_{\mathcal{O}}+(u_{0},\text{div}\sigma (\widetilde{u}_{0}))_{%
\mathcal{O}}
\end{equation*}%
\begin{equation*}
-\eta (u_{0},\widetilde{u}_{0})_{\mathcal{O}}+(u_{0},\text{div}(\mathbf{U)}%
\widetilde{u}_{0})_{\mathcal{O}}+(u_{0},\mathbf{U}\nabla \widetilde{u}_{0})_{%
\mathcal{O}}
\end{equation*}%
\begin{equation}
+(w_{2},\Delta ^{2}\widetilde{w}_{1})_{\Omega }-(\Delta w_{1},\Delta 
\widetilde{w}_{2})_{\Omega }.  \label{adj-A}
\end{equation}%
Also we have%
\begin{equation}
\left( B\varphi ,\widetilde{\varphi }\right) _{\mathcal{H}}=-(\text{div}(%
\mathbf{U)}p_{0},\widetilde{p}_{0})_{\mathcal{O}}+(\Delta (\mathbf{U}\nabla
w_{1}),\Delta \widetilde{w}_{1})_{\Omega }.  \label{adj-0}
\end{equation}%
For the second term of the RHS of the above equality: for any $w_{1},%
\widetilde{w}_{1}\in H^{3}(\Omega )$%
\begin{equation*}
(\Delta (\mathbf{U}\nabla w_{1}),\Delta \widetilde{w}_{1})_{\Omega
}=\left\langle \frac{\partial }{\partial \nu }(\mathbf{U}\nabla
w_{1}),\Delta \widetilde{w}_{1}\right\rangle _{\partial \Omega}
\end{equation*}%
\begin{equation*}
-(\nabla (\mathbf{U}\nabla w_{1}),\nabla \Delta \widetilde{w}_{1})_{\Omega }
\end{equation*}%
\begin{equation*}
=\left\langle (\mathbf{U\cdot \nu )}\Delta w_{1},\Delta \widetilde{w}%
_{1}\right\rangle _{\partial \Omega}-(\nabla (\mathbf{U}\nabla
w_{1}),\nabla \Delta \widetilde{w}_{1})_{\Omega }
\end{equation*}%
where we have used the fact that $w_{1}=\frac{\partial w_{1}}{\partial \nu }%
=0$ and this yields%
\begin{equation*}
\frac{\partial }{\partial \nu }(\mathbf{U}\nabla w_{1})=(\mathbf{U\cdot \nu )%
}\frac{\partial ^{2}w_{1}}{\partial \nu }=(\mathbf{U\cdot \nu )(}\Delta
w_{1}|_{\partial \Omega}\mathbf{).}
\end{equation*}%
Then%
\begin{equation*}
(\Delta (\mathbf{U}\nabla w_{1}),\Delta \widetilde{w}_{1})_{\Omega
}=\left\langle \Delta w_{1},\frac{\partial }{\partial \nu }(\mathbf{U}\nabla 
\widetilde{w}_{1})\right\rangle _{\partial \Omega}-(\nabla (%
\mathbf{U}\nabla w_{1}),\nabla \Delta \widetilde{w}_{1})_{\Omega }
\end{equation*}%
\begin{equation*}
=(\Delta w_{1},\Delta (\mathbf{U}\nabla \widetilde{w}_{1}))_{\Omega
}+(\nabla \Delta w_{1},\nabla (\mathbf{U}\nabla \widetilde{w}_{1}))_{\Omega
}-(\nabla (\mathbf{U}\nabla w_{1}),\nabla \Delta \widetilde{w}_{1})_{\Omega }
\end{equation*}%
\begin{equation}
=(\Delta w_{1},\Delta (\mathbf{U}\nabla \widetilde{w}_{1}))_{\Omega
}+(\Delta w_{1},\nabla ^{\ast }[\nabla (\mathbf{U}\nabla \widetilde{w}%
_{1})])_{\Omega }-(\nabla (\mathbf{U}\nabla w_{1}),\nabla \Delta \widetilde{w%
}_{1})_{\Omega }  \label{adj-1}
\end{equation}%
where $\nabla ^{\ast }\in \mathcal{L}(L^{2}(\Omega ),[H^{1}(\Omega
)]^{^{\prime }})$ is the adjoint of the gradient operator $\nabla \in 
\mathcal{L}(H^{1}(\Omega ),[L^{2}(\Omega )]).$ To continue with the third
term on RHS of (\ref{adj-1}):%
\begin{equation*}
-(\nabla (\mathbf{U}\nabla w_{1}),\nabla \Delta \widetilde{w}_{1})_{\Omega
}=(\mathbf{U}\nabla w_{1},\Delta ^{2}\widetilde{w}_{1})_{\Omega }
\end{equation*}%
\begin{equation*}
=-(w_{1},\left\{ \text{div}[U_{1},U_{2}]+\mathbf{U}\nabla \right\} \Delta
^{2}\widetilde{w}_{1})_{\Omega }
\end{equation*}%
\begin{equation}
=-(\Delta w_{1},\Delta {{\mathring{A}}^{-1}}\left\{ \text{div}[U_{1},U_{2}]+%
\mathbf{U}\nabla \right\} \Delta ^{2}\widetilde{w}_{1})_{\Omega }
\label{adj-2}
\end{equation}%
If we take into account (\ref{adj-2}) in (\ref{adj-1}) and invoke the
biharmonic operator with clamped homogeneous boundary conditions we take%
\begin{equation*}
(\Delta (\mathbf{U}\nabla w_{1}),\Delta \widetilde{w}_{1})_{\Omega
}=-(\Delta w_{1},\Delta {{\mathring{A}}^{-1}}\left\{ \text{div}[U_{1},U_{2}]+%
\mathbf{U}\nabla \right\} \Delta ^{2}\widetilde{w}_{1})_{\Omega }
\end{equation*}%
\begin{equation}
+(\Delta w_{1},\Delta (\mathbf{U}\nabla \widetilde{w}_{1}))_{\Omega }+(\Delta
w_{1},\Delta \lbrack \Delta {{\mathring{A}}^{-1}}\nabla ^{\ast }[\nabla (%
\mathbf{U}\nabla \widetilde{w}_{1})]])_{\Omega }.  \label{adj-3}
\end{equation}%
Now, considering (\ref{adj-3}) in (\ref{adj-0}) and combining the result
with (\ref{adj-A}) gives the adjoint operator given in (\ref{adj-AplusB}) and completes the proof of Lemma \ref{adj}.
\end{proof}

In order to establish the wellposedness result, one of the key tools that we use in our proof is the invertibility criterion of a linear, closed operator which we recall in the following lemma \cite[pg.102, Lemma 3.8.18]{pazy}:  
\begin{lemma}
\label{pazy}  Let $L$ be a linear and closed operator from the Hilbert
space $H$ into $H$. Then $L^{-1}\in \mathcal{L}(H)$ if and only if $R(L)$ is
dense in $H$ and there is an $m>0$ such that%
\begin{equation*}
\left\Vert Lf\right\Vert \geq m\left\Vert f\right\Vert \text{ \ for all \ }%
f\in D(L).
\end{equation*}
\end{lemma}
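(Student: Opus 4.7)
The plan is to assemble Lemmas \ref{invariant}, \ref{diss}, and \ref{md} via the Lumer--Phillips theorem to get semigroup generation and contraction in the special topology, and then transfer the contraction to a uniform bound in the standard $\mathcal{H}$-norm by an equivalence of norms argument that exploits the smallness of $\|\mathbf{U}\|_*$. Lemma \ref{invariant} says $H_N^{\perp}$ is $(\mathcal{A}+B)$-invariant, so the operator is well-defined as a map $D(\mathcal{A}+B)\cap H_N^{\perp}\to H_N^{\perp}$. Lemma \ref{diss} yields dissipativity of $(\mathcal{A}+B)$ with respect to $((\cdot,\cdot))_{H_N^{\perp}}$, and Lemma \ref{md} supplies the range condition. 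Consequently, by the Lumer--Phillips theorem, $(\mathcal{A}+B)$ generates a $C_0$-semigroup $\{e^{(\mathcal{A}+B)t}\}_{t\geq 0}$ on $H_N^{\perp}$ that is a contraction in $\||\cdot\||_{H_N^{\perp}}$. This settles the generation statement, the solution formula $\phi(t)=e^{(\mathcal{A}+B)t}\phi_0$, and the parenthetical claim that the semigroup is a contraction in the special norm.

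To obtain the uniform-in-time bound in $\|\cdot\|_{\mathcal{H}}$ (the genuinely new content beyond the Lumer--Phillips step), I would next establish that $\||\cdot\||_{H_N^{\perp}}$ and $\|\cdot\|_{\mathcal{H}}$ are equivalent on $H_N^{\perp}$ for $\|\mathbf{U}\|_*$ small enough. Fix $\varphi=[p_0,u_0,w_1,w_2]\in H_N^{\perp}$. The forward inequality $\||\varphi\||_{H_N^{\perp}}\le C\|\varphi\|_{\mathcal{H}}$ follows term-by-term from the triangle inequality applied to (\ref{specnorm}) together with: the Dirichlet-lift bound (\ref{liftnorm}) to dominate $\|D(g\cdot\nabla w_1)e_3\|_{\mathcal{O}}$ by $\|\Delta w_1\|_{\Omega}$, the elliptic estimate (\ref{psireg}) to dominate $\|\nabla\psi(p_0,w_1)\|_{\mathcal{O}}$ by $\|p_0\|_{\mathcal{O}}+\|\Delta w_1\|_{\Omega}$, and the Sobolev embedding $H_0^2(\Omega)\hookrightarrow W^{1,\infty}(\Omega)$ on the two-dimensional interface $\Omega$ to control $\|h_{\alpha}\cdot\nabla w_1+\xi w_1\|_{\Omega}$.

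For the reverse inequality, I would decompose
\[
u_0=\bigl[u_0-\alpha D(g\cdot\nabla w_1)e_3+\xi\nabla\psi(p_0,w_1)\bigr]+\bigl[\alpha D(g\cdot\nabla w_1)e_3-\xi\nabla\psi(p_0,w_1)\bigr],
\]
and similarly $w_2=[w_2+h_{\alpha}\cdot\nabla w_1+\xi w_1]-[h_{\alpha}\cdot\nabla w_1+\xi w_1]$, apply the triangle inequality, and invoke the same three estimates above to arrive at a bound of the shape
\[
\|\varphi\|_{\mathcal{H}}^{2}\le C_1\||\varphi\||_{H_N^{\perp}}^{2}+C_2\bigl(\|\mathbf{U}\|_{*}^{2}+\xi^{2}\bigr)\|\varphi\|_{\mathcal{H}}^{2}.
\]
Because $\alpha=2\|\mathbf{U}\|_{*}$ by (\ref{alpha}) and $\xi=O(\|\mathbf{U}\|_{*})$ as $\|\mathbf{U}\|_{*}\to 0$ by inspection of formula (\ref{33}), the second summand is absorbable into the left-hand side for $\|\mathbf{U}\|_{*}$ small enough, which delivers $\|\varphi\|_{\mathcal{H}}\le C\||\varphi\||_{H_N^{\perp}}$. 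Combining with the contraction in the special norm, for every $\phi^{*}\in H_N^{\perp}$ and every $t\ge 0$,
\[
\|e^{(\mathcal{A}+B)t}\phi^{*}\|_{\mathcal{H}}\le C\||e^{(\mathcal{A}+B)t}\phi^{*}\||_{H_N^{\perp}}\le C\||\phi^{*}\||_{H_N^{\perp}}\le C'\|\phi^{*}\|_{\mathcal{H}},
\]
which is the uniform-in-time bound claimed in the theorem.

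The main obstacle I anticipate is the absorption step in the reverse inequality: one must check that the smallness threshold on $\|\mathbf{U}\|_{*}$ it requires is no more stringent than those already imposed in Lemmas \ref{diss} and \ref{md}, so that the same hypothesis ``$\|\mathbf{U}\|_{*}$ sufficiently small'' covers all three uses simultaneously. This entails bookkeeping of the constants coming from (\ref{liftnorm}), (\ref{psireg}), the Sobolev embedding on $\Omega$, together with the explicit smallness of $\alpha$ and the asymptotics of $\xi$ given by (\ref{33}). Everything else is routine assembly.
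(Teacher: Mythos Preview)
Your proposal does not address the stated lemma at all. Lemma \ref{pazy} is the classical invertibility criterion for a closed linear operator: $L^{-1}\in\mathcal{L}(H)$ if and only if $R(L)$ is dense and $\|Lf\|\ge m\|f\|$ on $D(L)$. What you have written is instead an argument for Theorem \ref{wp}, the main wellposedness result of the paper. These are different statements: Lemma \ref{pazy} is an abstract functional-analytic fact about closed operators on a Hilbert space, with no reference to $\mathcal{A}$, $B$, $H_N^{\perp}$, the special inner product, or the Lumer--Phillips theorem.

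Moreover, the paper does not supply its own proof of Lemma \ref{pazy}; it merely recalls the statement in the Appendix and cites \cite[pg.~102, Lemma 3.8.18]{pazy}. A correct proof would run as follows. For the forward direction, if $L^{-1}\in\mathcal{L}(H)$ then $R(L)=H$ is trivially dense, and $\|f\|=\|L^{-1}Lf\|\le\|L^{-1}\|\,\|Lf\|$ gives the lower bound with $m=\|L^{-1}\|^{-1}$. For the reverse direction, the estimate $\|Lf\|\ge m\|f\|$ forces $L$ to be injective, so $L^{-1}:R(L)\to D(L)$ is well-defined and bounded by $1/m$; closedness of $L$ then implies $R(L)$ is closed (if $Lf_n\to g$ then $(f_n)$ is Cauchy by the lower bound, hence $f_n\to f$ and $g=Lf$ by closedness), and a closed dense subspace of $H$ is all of $H$, so $L^{-1}\in\mathcal{L}(H)$.

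As a side remark, your write-up would be a perfectly reasonable proof of Theorem \ref{wp} and indeed matches the paper's own argument for that theorem, but that is not what was asked here.
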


\section{Acknowledgement}

The author would like to thank the National Science Foundation, and
acknowledge her partial funding from NSF Grant DMS-1616425 and NSF Grant
DMS-1907823.

\end{document}